\definecolor{mygreen}{RGB}{0,160,50}
\newtheorem{theorem}{Theorem}[section]
\newtheorem{lemma}[theorem]{Lemma}
\newtheorem{corollary}[theorem]{Corollary}
\newtheorem{e-definition}[theorem]{Definition\rm}
\newtheorem{remark}[theorem]{Remark}
\def\ignore#1{}
\def\eq{\begin{equation}}
\def\en{\end{equation}}
\def\eqa{\begin{eqnarray}}
\def\ena{\end{eqnarray}}
\def\eqs{\begin{eqnarray*}}
\def\ens{\end{eqnarray*}}
\def\re{{\mathbb R}}
\def\non{\nonumber}
\def\s{\sigma}
\def\l{\lambda}
\def\d{\delta}
\def\Ref#1{(\ref{#1})}
\def\Eq{\ =\ }
\def\Le{\ \le\ }
\def\Z{{\mathbb{Z}}}
\def\h{\eta}
\def\n{\nu}
\def\e{\varepsilon}
\def\f{\varphi}
\def\NN{{\mathcal N}}
\def\nin{\noindent}
\def\law{{\mathcal L}}
\def\JJ{{\mathcal J}}
\def\AA{{\mathcal A}}
\def\half{{\tfrac12}}
\def\quarter{{\tfrac14}}
\def\eighth{{\tfrac18}}
\def\third{{\tfrac13}}
\def\pr{{\mathbb P}}
\def\ex{{\mathbb E}}
\def\Def{{\ \,:=\ \,}}
\def\D{\Delta}
\def\uii{^{(i)}}
\def\Blb{\left\{}
\def\Brb{\right\}}
\def\giv{\,|\,}
\def\Giv{\,\Big|\,}
\def\uj{^{(j)}}
\def\dtv{d_{{\rm TV}}}
\def\m{\mu}
\def\msk{\medskip}
\def\L{\Lambda}
\def\S{\Sigma}
\def\Blm{\left|}
\def\Brm{\right|}
\def\ej{e^{(j)}}
\def\eii{e^{(i)}}
\def\sjd{\sum_{j=1}^d}
\def\sid{\sum_{i=1}^d}
\def\Blb{\left\{}
\def\Brb{\right\}}
\def\Bl{\left(}
\def\Br{\right)}
\def\tr{{\rm Tr}\,}
\def\th{\theta}
\def\nud{_n^\d}
\def\nuh{_n^\h}
\def\DN{\mathcal {DN}}
\def\leqn{\lefteqn}
\def\g{\gamma}
\def\Lbar{\overline{\L}}
\def\lmax{\l_{{\rm max}}}
\def\lmin{\l_{{\rm min}}}
\def\bt{t}
\def\ut{^{(2)}}
\def\ui{^{(1)}}
\def\KKS{{\cal K}_\S}
\def\ts{{\tilde s}}
\def\G{\Gamma}
\def\gbars{{\bar\g}(\s^2)}
\def\nti{n\to\infty}
\def\siim{\sum_{i=1}^m}
\def\uh{^{(3)}}
\def\lla{\alpha}
\def\hS{{\widehat\S}}
\def\hA{{\widehat A}}
\def\vva{{u}}
\def\vvb{{\tilde u}}
\def\z{\zeta}
\def\te{{\tilde\e}}
\def\Rh{\rho}
\def\tR{{\widetilde R}}
\def\FF{{\cal F}}
\def\ccc{\g}
\def\nS#1{|#1|_{\S}}
\def\nl#1{|#1|_1}
\def\sld{\sum_{l=1}^d}
\def\ccc{\xi}
\def\td{{\tilde \d}}
\def\VVV{S}
\def\vvs{s}
\def\eth{\th}
\def\Sp{{\rm Sp}}
\def\lbar{{\bar\l}}
\def\tY{{\widetilde Y}}
\def\sjn{\sum_{j=1}^n}
\def\var{{\rm Var\,}}
\def\cov{{\rm Cov\,}}
\def\skn{\sum_{k=1}^n}
\def\smh{\psi}
\def\tn{{\tilde n}}
\def\wA{{\widetilde A}}
\def\wS{{\widetilde \S}}
\def\moda{\|A\|}
\def\vnew{\nu}
\def\ABA{{\widetilde\AA}}
\def\adb#1{#1}
\def\adbg#1{#1}
\def\adbn#1{#1}
\def\adbr{}
\def\adbb{}
\begin{document}

\title{\hbox{Multivariate approximation in total variation, II:} discrete normal approximation}

\author{
\renewcommand{\thefootnote}{\arabic{footnote}}
A. D. Barbour\footnotemark[1],
\ M. J. Luczak\footnotemark[2]
\ 
 \& A. Xia\footnotemark[3]
\\
Universit\"at Z\"urich, Queen Mary University of London \\ \& University of Melbourne
}

\footnotetext[1]{Institut f\"ur Mathematik, Universit\"at Z\"urich, Winterthurertrasse 190, CH-8057 Z\"urich;
e-mail {\tt a.d.barbour@math.uzh.ch}.  Work begun while ADB was Saw Swee Hock 
Professor of Statistics at the National
University of Singapore, carried out in part at the University of Melbourne and at Monash University, and
supported in part by Australian Research Council Grants Nos DP120102728, DP120102398, DP150101459 and DP150103588}
\footnotetext[2]{School of Mathematical Sciences, Queen Mary University of London, Mile End Road, London E1 4NS, UK; 
e-mail {\tt m.luczak@qmul.ac.uk}.
Work carried out in part at the University of Melbourne, and supported by an EPSRC Leadership Fellowship, 
grant reference EP/J004022/2, 
and in part by Australian Research Council Grants Nos DP120102398 and DP150101459.}
\footnotetext[3]{School of Mathematics and Statistics, University of Melbourne, Parkville, VIC 3010, Australia;
e-mail {\tt a.xia@ms.unimelb.edu.au}.
Work supported in part by Australian Research Council Grants Nos DP120102398 and DP150101459.}
\maketitle
%} \footnotetext[4]
%  \\ ADB was supported in part by Australian Research Council Grants Nos DP120102728 and DP120102398}

\begin{abstract}
The paper applies the theory developed in Part~I to the discrete normal approximation in 
total variation of random vectors in~$\Z^d$.
We illustrate the use of the method for  sums of independent integer valued
random vectors,  and for random
vectors exhibiting an exchangeable pair.  We conclude with an application to random
colourings of regular graphs.
\end{abstract}
 
 \noindent
{\it Keywords:} Markov population process; multivariate approximation; total variation distance; 
infinitesimal generator; Stein's method \\
{\it AMS subject classification:} Primary 62E17; Secondary 62E20, 60J27, 60C05  \\
{\it Running head:}  Multivariate approximation

\section{Introduction}\label{Introduction}
\setcounter{equation}{0}
In Theorem~4.8 of Barbour, Luczak \& Xia (2017) (Part~I), we establish bounds on the total
variation distance between the distribution of a random element $W \in \Z^d$ and the
equilibrium distribution of a suitably chosen Markov population process~$X_n$.
In this paper, we show that the bounds are of order $O(n^{-1/2}\log n)$ as $\nti$ if
$W \sim \DN_d(nc,n\S)$, for any $c\in\re^d$ and positive definite symmetric $d\times d$ matrix~$\S$,
where the discrete normal distribution $\DN_{d}(nc,n\S)$ is obtained from $\NN_d(nc,n\S)$ 
by assigning the probability of the $d$-box 
\[
    [i_1 - 1/2,i_1 + 1/2) \times\cdots \times [i_d-1/2,i_d+1/2)
\]
to the integer vector $(i_1,\ldots,i_d)$, for each $(i_1,\ldots,i_d) \in \Z^d$.  
From this, we deduce bounds for the discrete normal approximation of any random $d$-vector~$W$.

To state Theorem~4.8 of Part~I in the form that we shall need, we let $c\in\re^d$ be arbitrary,
and $A,\s^2 \in \re^{d\times d}$ be such \adbb{that~$A$ is a Hurwitz matrix (all its eigenvalues 
have negative real parts),} and that~$\s^2$ is positive definite and symmetric.  We let~$\S$ denote the positive
definite solution of the continuous Lyapunov equation
\eq\label{ADB-Sigma-eqn}
    A\S + \S A^T + \s^2 \Eq 0;
\en
for example, if $A = -I$, then $\S = \half\s^2$.  We define an associated norm $\nS{x} := \sqrt{x^T\S^{-1}x}$.
We then define an operator~$\ABA$ acting on
functions $h\colon \Z^d \to \re$ by
\eq\label{ADB-approx-gen}
  \ABA_n h(w) \Def  \frac n2 \tr(\s^2\D^2h(w)) + \D h^T(w)A(w-nc), \quad w\in\Z^d,
\en
where
\[
   \D_j h(w) \Def h(w+\ej) - h(w);\quad \D^2_{jk} h(w) \Def \D_j(\D_k h)(w),\quad 1\le j,k \le d.
\]
For $f\colon \Z^d\to\re$, we also define
\eq\label{ADB-norm-notation}
   \|f\|^\S_{n\h,\infty} \Def \max_{\nS{X - nc} \le n\h} |f(X)|,
\en
with~$nc$ implicit.  We then write $\|\D h\|^\S_{n\h,\infty}$ and
$\|\D^2 h\|^\S_{n\h,\infty}$ for $\|f\|^\S_{n\h,\infty}$, when $f(X) = \max_{1\le j\le d}|\D_j h(X)|$
and $f(X) = \max_{1\le j,k\le d}|\D^2_{jk} h(X)|$, respectively.
For a matrix~$M$, we let~$\|M\|$ denote its spectral norm; if it is positive definite and symmetric, 
we let $\lmax(M)$ and~$\lmin(M)$ denote its largest and smallest eigenvalues, $\Rh(M)$ their ratio
\adbb{and $\Sp'(M) := \{\lmin(M),\lmax(M),d^{-1}\tr(M)\}$.}

\begin{theorem}\label{ADB-first-approx-thm}
\adbr{
Given any $c,A$ and~$\s^2$ as above,
% eigenvalues all with negative real parts, and~$\s^2$ being positive definite, 
there exists an associated sequence of Markov population processes $(X_n,\,n\ge1)$, 
whose restriction~$X\nud$ to the $n\d$-ball in~$\nS{\cdot}$ \adbb{with centre~$nc$}, 
for~$\d \le \lmin(\s^2)/(8\|A\|)$,
has equilibrium distribution~$\Pi\nud$ concentrated \adbb{near~$nc$, which is almost the same
for all~$\d$.}
% with the following properties.  Let $\gbar$ and~$\S$ be as in 
% \Ref{ADB-Lambda-def} and~\Ref{ADB-Sigma-eqn}.} 
The closeness of~$\law(W)$ in total variation to~$\Pi\nud$, for any random vector~$W$ in $\Z^d$,
can be checked as follows. For $\td_0 = \min\{3,\lmin(\s^2)/(8\|A\|\sqrt{\lmax(\S)})$,
and for any~$v > 0$ and $0 < \d' < \half\td_0$, there exist constants  $C_{\ref{ADB-first-approx-thm}}(v,\d')$
and $n_{\ref{ADB-first-approx-thm}}(v,\d')$, which are \adbb{continuous
functions of~$v$, $\d'$, $\|A\|/\Lbar$, $\Sp'(\s^2/\Lbar)$ and $\Sp'(\S)$}, where $\Lbar := d^{-1}\tr(\s^2)$, 
but not of~$n$, with the following property:
if, for some $v>0$, $0 < \d' < \half\td_0$, $n \ge n_{\ref{ADB-first-approx-thm}}(v,\d')$
and  $\e_1,\e_{20},\e_{21},\e_{22} > 0$,
$$
\begin{array}{rl}
 {\rm (i)}& \ex\nS{W -nc}^2 \Le dvn;\\[1ex]
 {\rm (ii)}& \dtv(\law(W),\law(W+\ej)) \Le \e_1, \mbox{ for each } 1\le j\le d;\\[1ex]
 {\rm (iii)}&  |\ex\{\ABA_n h(W)I[\nS{W-nc} \le n\d'/3]\}| \\
          &\qquad\Le \Lbar\,(\e_{20}\|h\|^\S_{n\td_0/4,\infty} + \e_{21}n^{1/2}\|\D h\|^\S_{n\td_0/4,\infty}
                + \e_{22}n\|\D^2 h\|^\S_{n\td_0/4,\infty}),
\end{array}
$$
for all $h\colon \Z^d\to\re$,  where $\ABA_n$ is as defined in~\Ref{ADB-approx-gen},
then, for any~$\d$ such that $2\d' \le \d \le \td_0$, 
\[
    \dtv(\law(W),\Pi\nud) \Le C_{\ref{ADB-first-approx-thm}}(v,\d')
           (d^3n^{-1/2} + d^4\e_1 +\e_{20} + d^{1/4}\e_{21} +d^{1/2}\e_{22})\log n .
\]
}
\end{theorem}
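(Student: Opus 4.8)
The plan is to deduce Theorem~\ref{ADB-first-approx-thm} from Theorem~4.8 of Part~I, applied to a concrete sequence $(X_n)$ of Markov population processes whose generators are discrete Ornstein--Uhlenbeck type operators with drift~$A$ and covariation rate~$\s^2$. Fix $c, A, \s^2$ as in the statement. Since~$\s^2$ is positive definite, there is a \emph{finite} set $\JJ_0 \subset \Z^d \setminus \{0\}$ of jump vectors --- its size and sup-norm depending on~$\s^2$ but not on~$n$ --- together with symmetric nonnegative weights $(a_J)_{J\in\JJ_0}$, $a_J = a_{-J}$, such that $\sum_{J\in\JJ_0} a_J\,JJ^T = \s^2$ (the integer rank-one matrices conically generate the interior of the positive semidefinite cone, by a separating hyperplane argument), and vectors $(\beta_J)_{J\in\JJ_0}$ with $\sum_{J\in\JJ_0}\langle\beta_J,x\rangle J = Ax$ for all~$x$. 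For each~$n$, let $X_n$ be the Markov population process on~$\Z^d$ with transitions $w\mapsto w+J$ at rate $\lambda_J(w) := n a_J + \langle\beta_J,\,w-nc\rangle$. A short computation shows $\GG_n h = \ABA_n h + \mathcal{R}_n h$, where $\GG_n$ is the generator of~$X_n$ and $\mathcal{R}_n h$ is a linear combination of third and higher order differences of~$h$ with coefficients of order $O(n)$ near~$nc$ and $O(\nS{w-nc})$ further out. The rates~$\lambda_J$ are bounded away from zero throughout the $n\d$-ball in~$\nS{\cdot}$ centred at~$nc$ for~$\d$ in the allowed range (the condition $\d \le \lmin(\s^2)/(8\|A\|)$ is exactly what keeps the drift $A(w-nc)$ dominated by the diffusion there), and, because~$A$ is Hurwitz with~$\S$ solving~\Ref{ADB-Sigma-eqn}, the identity $\S^{-1}A + A^T\S^{-1} = -\S^{-1}\s^2\S^{-1}$ gives, for $V(w):=\nS{w-nc}^2$, the drift inequality $\GG_n V(w) \le -c_1 V(w) + c_2 n$ throughout that ball, with $c_1 \asymp \lmin(\s^2)/\lmax(\S)$ and $c_2$ bounded in terms of the spectral data of~$\s^2$, $\S$ and~$d$. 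Hence $X\nud$ has a unique equilibrium~$\Pi\nud$, and applying the drift inequality to $e^{\th V/n}$ (for~$\th$ small) yields the Gaussian tail bound $\Pi\nud(\nS{w-nc} > t\sqrt n) \le e^{-c_3 t^2}$, $0\le t\le\d\sqrt n$, with $c_3$ a continuous function of the spectral data.

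Since the drift of~$V$ is negative throughout the $n\d$-ball for every~$\d$ in the range $2\d'\le\d\le\td_0$, a coupling of the restrictions of~$X_n$ to two such balls (the two processes agreeing until one of them reaches the smaller boundary) together with the tail bound above shows that $\dtv(\Pi_n^{\d_1},\Pi_n^{\d_2})$ is exponentially small in~$n$ for any two such radii; this gives the first assertion of the theorem, and shows it suffices to establish the stated bound for a single~$\d$ in this range.

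We now apply Theorem~4.8 of Part~I to~$X_n$ and the given~$W$. Its hypotheses are: a second moment bound on~$\nS{W-nc}$, which is~(i); the one step smoothness bound $\dtv(\law(W),\law(W+\ej))\le\e_1$, which is~(ii); and a bound on $|\ex\{\GG_n h(W)\,I[\nS{W-nc}\le n\d'/3]\}|$ in terms of the Stein norms $\|h\|^\S_{n\td_0/4,\infty}$, $n^{1/2}\|\D h\|^\S_{n\td_0/4,\infty}$ and $n\|\D^2 h\|^\S_{n\td_0/4,\infty}$. To get the last, write $\GG_n h = \ABA_n h + \mathcal{R}_n h$: the contribution of $\ABA_n h$ is controlled by~(iii). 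Each third difference appearing in $\mathcal{R}_n h$, say $\D^3_{jkl}h$, equals $\D^2_{kl}h(\cdot+\ej) - \D^2_{kl}h(\cdot)$, so that, for any fixed lattice vector~$J'$,
\[
  \bigl|\ex\{\D^3_{jkl}h(W-J')\,I[\nS{W-nc}\le n\d'/3]\}\bigr| \Le 2\e_1\,\|\D^2 h\|^\S_{n\td_0/4,\infty} + O(n^{-1/2})\,\|\D^2 h\|^\S_{n\td_0/4,\infty}
\]
by~(ii), the second term absorbing the error from the shifted indicator via~(i); since the coefficient of such a term in $\mathcal{R}_n h$ is $O(n\Lbar)$ on the ball (fourth and higher order differences being handled likewise or simply dominated), the remainder contributes a term of the form $\Lbar\,(\e_1 + n^{-1/2})\,n\|\D^2 h\|^\S_{n\td_0/4,\infty}$. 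Thus the hypothesis of Part~I holds with the same~$\e_1$ and~$v$ and with $\e_{20}$, $\e_{21}$, $\e_{22}$ replaced by $\e_{20}+O(n^{-1/2})$, $\e_{21}$, $\e_{22}+O(\e_1+n^{-1/2})$.

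Substituting these into the conclusion of Theorem~4.8 of Part~I and collecting terms gives the asserted estimate. The factor~$\log n$ and the explicit powers of~$d$ come entirely from the Stein solution estimates of Part~I for the operator~$\GG_n$: the $\log n$ is the price of smoothing an indicator in~$d$ dimensions, while tracking the dimension through the first and second order difference bounds on the Stein solution produces the coefficients $d^3$, $d^4$, $d^{1/4}$ and $d^{1/2}$ of $n^{-1/2}$, $\e_1$, $\e_{21}$ and~$\e_{22}$; the $O(n^{-1/2})$ and $O(\e_1)$ corrections picked up above are absorbed into the $d^3 n^{-1/2}$ and $d^4\e_1$ terms. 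The continuity of $C_{\ref{ADB-first-approx-thm}}(v,\d')$ and $n_{\ref{ADB-first-approx-thm}}(v,\d')$ in $v$, $\d'$, $\|A\|/\Lbar$, $\Sp'(\s^2/\Lbar)$ and $\Sp'(\S)$ follows from the corresponding dependence in Part~I together with the explicit forms of $c_1,c_2,c_3$ and of the Stein factors. The main obstacle is the construction in the first step: one needs a single family of genuine population processes whose generators reproduce~$\ABA_n$ closely on the $n\td_0/4$-ball while being regular enough for Theorem~4.8 of Part~I --- bounded jumps, rates bounded away from zero in the interior, and a Lyapunov function with the sub-Gaussian concentration above whose constants depend only on the spectral data of~$\s^2$ and~$\S$. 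Granting such a process, the remaining steps are bookkeeping on top of Part~I, the one delicate point being the dimensional accounting that yields the explicit powers of~$d$.
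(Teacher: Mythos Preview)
This theorem is not proved in the present paper at all: as the introduction makes explicit (``To state Theorem~4.8 of Part~I in the form that we shall need\ldots''), Theorem~\ref{ADB-first-approx-thm} \emph{is} Theorem~4.8 of Part~I, merely restated. The paper offers no proof here; it imports the result. Your proposal, which sets out to deduce Theorem~\ref{ADB-first-approx-thm} from Theorem~4.8 of Part~I, is therefore circular --- you are invoking the theorem to prove itself.

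Read more charitably, your sketch could be taken as an outline of the argument that actually lives in Part~I, and in that light the broad architecture is plausible: one does construct a Markov population process with a finite jump set and position-dependent rates so that its generator approximates~$\ABA_n$; one does verify a Foster--Lyapunov inequality for $V(w)=\nS{w-nc}^2$ giving sub-Gaussian concentration of~$\Pi\nud$; and the Stein-solution bounds do supply the $\log n$ and the explicit powers of~$d$. But the real content --- the existence of a jump decomposition of~$\s^2$ with the required uniformity, the quantitative bounds on the Stein solution and its differences, and the careful dimensional accounting --- is precisely what is proved in Part~I, and your proposal simply asserts these as consequences of ``Theorem~4.8 of Part~I'', which is the statement under discussion. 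In particular, your handling of the remainder~$\mathcal{R}_n h$ by writing third differences as differences of second differences and invoking hypothesis~(ii) is not a substitute for the Stein-factor estimates of Part~I: the hypothesis~(ii) concerns~$\law(W)$, not the Stein solution, and the route in Part~I is to control the higher differences of the solution directly.
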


The accuracy of the approximation, for fixed $c$, $A$ and~$\s^2$, is thus of order
$O(\log n\{n^{-1/2} + \e_1 + \e_{20} + \e_{21} + \e_{22}\})$, and is determined by
how small the $\e$-quantities are.  In Section~\ref{examples}, we give examples to show that they
can all be of order~$O(n^{-1/2})$, giving an overall bound of order $O(n^{-1/2}\log n)$.
The constant $C_{\ref{ADB-first-approx-thm}}$ and the quantities $1/\td_0$ and $d^{-1}\tr(\s^2)$ depend
on $A$ and~$\s^2$ in such a way that they do not grow with increasing dimension~$d$, 
provided that the spectral norm of~$A$ and the eigenvalues of~$\s^2$ and~$\S$
remain bounded away from zero and infinity;  more detail is given in Part~I.
\adbb{Note, however, that~$n$ appears in the definition of~$\ABA_n$ only as a product with~$\s^2$,
and so can be chosen to prevent $\tr(\s^2)$ and~$\tr(\S)$ becoming large.}
Note also that the equilibrium distribution~$\Pi\nud$ remains the same if both $A$ and~$\s^2$
are multiplied by a common factor~$a > 0$ --- this merely reflects a new choice of time scale ---
but the operator~$\ABA_n$ is multiplied by~$a$.  The factor~$d^{-1}\tr(\s^2))$ on the
right hand side of the inequality in Condition~(iii) of Theorem~\ref{ADB-first-approx-thm}
ensures that the \adbb{constant~$C_{\ref{ADB-first-approx-thm}}(v,\d')$} 
is the same for all choices of~$a$.
  
The remainder of this paper completes two tasks.  The first is to show that,
if $W \sim \DN_d(nc,n\S)$, then Conditions (i)--(iii) of Theorem~\ref{ADB-first-approx-thm}
are satisfied with all the $\e$-quantities of order $O(n^{-1/2})$.  
As a result, $\Pi\nud$ in the above theorem can be replaced by $\DN_d(nc,n\S)$,
giving the desired method of proving discrete normal approximation.
The second is to show that the theorem can be applied in reasonable generality,
yielding good rates of approximation. Note that there are
many pairs $(A,\s^2)$ that correspond to the same~$\S$, and the flexibility of having many
pairs $(A,\s^2)$ to use when approximating a single discrete normal distribution
$\DN_d(nc,n\S)$ represents a real advantage.

\ignore{
Note also that the expectation $\ex\{\AA_n h_B(W)I\nud(W)\}$ in Condition~(iii)
can be replaced by its analogue $\ex\{\AA_n h_B(W)I[|W-nc| \le n\d'/3]\}$, with truncation outside a 
Euclidean ball, without much extra error, provided that~$\d'$ is such that 
$B_{\d'/3}(c) := \{w\colon |w-c| \le \d'/3\} \subset B_{\d/3,\S}(c)$.  This follows from
the assumption that $\ex\nS{W-nc}^2 \le vn$, and from the bounds obtained on the first and second differences
of the functions~$h_B$; see Section~\ref{5.5-1.1}. 
}

\adbr{
The structure of the paper is as follows.  
A brief taste of the results to be obtained is given in Section~\ref{prelim}. 
In Section~\ref{ADB-discrete-normal}, the main
discrete normal approximation, Theorem~\ref{ADB-DN-approx-thm}, is established,
giving \adbb{two} conditions to be checked in order to conclude discrete normal approximation
in total variation.  If a `linear regression pair' can be found, these conditions
can be substantially simplified; we give a corresponding result in Theorem~\ref{AX-exch-thm} 
of Section~\ref{regression-property}.
This theorem is applied, in Section~\ref{examples}, to sums of independent random vectors, 
and then in the more
general context of exchangeable pairs, as developed in Stein~(1986).
We conclude with an application to the joint distribution of the numbers
of monochrome edges in a graph colouring problem.  
A number of proofs that involve lengthy calculations are deferred to Section~\ref{appendix}.}
\adbb{The form of Theorem~\ref{ADB-DN-approx-thm} also lends itself to use under assumptions of local
 dependence.}

\subsection{Illustration}\label{prelim}
\adbr{
Theorem~\ref{ADB-DN-approx-thm} is somewhat forbidding.  Before going into detail, 
we give a simple corollary of the theorem in the context of exchangeable pairs
having the approximate linear regression property, and sketch
an example. 
}

Suppose that~$(W,W')$ is a pair of random integer valued $d$-vectors, defined
on the same probability space, such that the pairs $(W,W')$ and $(W',W)$ have the same distribution.  
Assume that $\ex\{|W|^3\} < \infty$, and write $\m := \ex W$.
Let~$\xi$ denote the difference $W'-W$, so that $\ex\xi = 0$, and set $\s^2 := \ex\{\xi\xi^T\}$,
assumed positive definite, and $\chi := \ex\{|\xi|^3\}$.  Assume that,
for some $n>0$ and for some Hurwitz matrix~$A \in \re^{d\times d}$ with spectral norm~$\|A\|$, 
% all of whose eigenvalues have negative real parts, 
we have
\eq\label{prelim-Exch-props}
   \adbr{\begin{array}{rl}
   \ex\{\xi \giv W\} &\Eq n^{-1}A(W-\m) + \{\|A\|/n\}^{1/2}R_1(W); \\[1ex]
   \s^2(W) &\Def \ex\{\xi\xi^T \giv W\}.   % \Eq \s^2 + R_2(W).
   \end{array}}
\en 
Clearly, $\ex\{R_1(W)\} = 0$. % \adbn{and $\ex\{R_2(W)\} = 0$}.
% Let~$\S$ be the positive definite solution of $A\S + \S A^T + \s^2 = 0$, and write 
% $\s^2_\S := \S^{-1/2}\s^2\S^{-1/2}$. Define $\lla_1 := \half\lmin(\s^2_\S)$ and 
Write $L := (\|A\|/n)^{1/2}\chi\{\tr(\s^2)\}^{-3/2}$, \adbr{let~$\S$ be the solution 
to~\Ref{ADB-Sigma-eqn},} and assume that 
\[
    \{\ex|\S^{-1/2}R_1(W)|^3\}^{1/3} \Le %\eighth\adbn{(\lla_1\tr(\s^2\S^{-1})/\moda)}^{1/2},
            \frac{\lmin(\s^2)}{8\lmax(\S)} \sqrt{\frac d{2\moda}}\,.
\]
% where~$\S$ is the positive definite solution of $A\S + \S A^T + \s^2 = 0$.
Let~$\JJ$ be the set of $d$-vectors such that $q^J := \pr[\xi=J] > 0$.
Suppose that $\JJ$ is finite, and that each of the coordinate vectors~$\ej$, $1\le j\le d$,
can be obtained as a (finite) sum of elements of~$\JJ$.
For $Q^J(W) := \pr[\xi = J \giv W]$, we set
\[
  \vva^J \Def (q^J)^{-1} \ex|Q^J(W) - q^J|,
\] 
and $\vva^* := \max_{J\in\JJ}\vva^J$.

\begin{theorem}\label{Dec-exch-thm}
Under the above circumstances, there exist constants~$n_0$ and~$C$, depending on $d$, $\s^2$, $\JJ$ and~$A$,
such that, if $n \ge n_0$, we have
\eqs
   d_{TV}\bigl(\law(W),\DN_{d}(\mu,n\S)\bigr) 
  &\le&  C \log n\bigl\{L(1 + n^{1/2}\vva^*)  + \ex|R_1(W)| \bigr\}.
\ens
\end{theorem}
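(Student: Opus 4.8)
The plan is to derive this result as a specialisation of Theorem~\ref{AX-exch-thm}, so that the work reduces to checking that the streamlined assumptions made here imply its hypotheses, and to simplifying the resulting bound; at the level of content this amounts to establishing Conditions~(i)--(iii) of Theorem~\ref{ADB-first-approx-thm} for the exchangeable pair $(W,W')$, which I now sketch. I would begin by letting~$\S$ be the unique positive definite solution of the Lyapunov equation~\Ref{ADB-Sigma-eqn} for the given Hurwitz matrix~$A$ and $\s^2 = \ex\{\xi\xi^T\}$. The hypothesised bound on $\{\ex|\S^{-1/2}R_1(W)|^3\}^{1/3}$ is exactly what is needed to ensure that an admissible truncation radius~$\d'$, with $0<\d'<\half\td_0$, can be chosen and that the regression remainder stays within the range in which Theorem~\ref{ADB-first-approx-thm} and its discrete-normal refinement (Theorem~\ref{ADB-DN-approx-thm}) apply. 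It is also worth recording at once that $\chi \ge \{\tr(\s^2)\}^{3/2}$ by Jensen's inequality, so that $L \ge (\moda/n)^{1/2}$; this lets~$L$ absorb the unavoidable $O(n^{-1/2})$ terms later on.

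For Condition~(i), apply the antisymmetry of the exchangeable pair to the map $(w,w')\mapsto(w-\m)(w'-w)^T + (w'-w)(w-\m)^T$; taking conditional expectations and inserting~\Ref{prelim-Exch-props} gives $A\,\cov(W) + \cov(W)A^T = -n\s^2 + O\bigl((n\moda)^{1/2}\ex|R_1(W)|\bigr)$, so that $\cov(W)$ differs from $n\S$ only by a lower-order term and Condition~(i) holds for some~$v$ of order one. For Condition~(ii), exchangeability yields the identity $\pr[W=w] = \sjj Q^J(w-J)\,\pr[W=w-J]$, which says that $\law(W)$ is invariant for the Markov kernel sending~$x$ to $x+J$ with probability $Q^J(x)$; this kernel differs, averaged over $\law(W)$, by at most~$\vva^*$ in total variation from the genuine random walk with step law $(q^J)_{J\in\JJ}$, and, since each~$\ej$ lies in the group generated by~$\JJ$, comparing the two kernels together with a local-limit estimate for the $\JJ$-walk turns this near-invariance into the required bound on $\dtv(\law(W),\law(W+\ej))$. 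For Condition~(iii), run the standard exchangeable-pair Stein computation: antisymmetry applied to suitable first- and second-order functionals of~$h$, combined with a discrete Taylor expansion of $h(W')-h(W)$ in the finitely many increment directions $J\in\JJ$, reduces $\ex\{\ABA_n h(W)\,I[\nS{W-\m}\le n\d'/3]\}$ to three kinds of error: one governed by the regression remainder and bounded by $\ex|R_1(W)|$; one measuring the discrepancy $\ex|Q^J(W)-q^J|$ and bounded by~$\vva^*$; and a third-order remainder governed by~$\chi$ and $\{\tr(\s^2)\}^{-3/2}$, hence by~$L$; the truncation $I[\nS{W-\m}\le n\d'/3]$ is harmless by Condition~(i) and Markov's inequality.

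Collecting the resulting values of $\e_1,\e_{20},\e_{21},\e_{22}$, substituting into the conclusion of Theorem~\ref{ADB-DN-approx-thm} (so that the equilibrium law $\Pi\nud$ is already replaced by $\DN_d(\m,n\S)$), and simplifying --- with the $O(n^{-1/2})$-scale pieces absorbed into~$L$ as above --- then yields $\dtv(\law(W),\DN_d(\m,n\S)) \le C\log n\{L(1+n^{1/2}\vva^*) + \ex|R_1(W)|\}$, the $\log n$ being inherited from Theorem~\ref{ADB-first-approx-thm} and $C$, $n_0$ depending only on $d$, $\s^2$, $\JJ$ and~$A$, through $\moda$, $\Sp'(\s^2)$, $\Sp'(\S)$, the normalised third moment $\chi/\{\tr(\s^2)\}^{3/2}$ and the combinatorial data of~$\JJ$. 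The step I expect to be the main obstacle is Condition~(ii): converting the near-invariance of $\law(W)$ under the $\JJ$-kernel into a total-variation smoothness estimate of the correct order requires a careful local-limit analysis of the multidimensional lattice walk generated by~$\JJ$, together with tight control of how the per-step discrepancy~$\vva^*$ accumulates, all with constants uniform in~$n$; by comparison the exchangeable-pair calculation behind Condition~(iii), though lengthy, is of the routine type the paper relegates to Section~\ref{appendix}.
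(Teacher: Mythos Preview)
Your overall plan --- apply Theorem~\ref{AX-exch-thm} and then simplify --- is the paper's plan too, and your observations that $L \ge (\moda/n)^{1/2}$ and that the third-moment hypothesis on $R_1$ feeds into the moment conditions~\Ref{Exch-R-bnds-11}--\Ref{Exch-R-bnds-12} (via~\Ref{Exch-simpler-bnds}) are correct. The difficulty is your treatment of the smoothness bound $\e_1 = \max_j \dtv(\law(W),\law(W+\ej))$, which you flag yourself as the main obstacle.

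The invariance identity $\pr[W=w] = \sJJ Q^J(w-J)\pr[W=w-J]$ that you write down is strictly weaker than exchangeability: it is the \emph{sum over $J$} of the detailed-balance relations $Q^J(w-J)\pr[W=w-J] = Q^{-J}(w)\pr[W=w]$ that exchangeability actually gives. Your near-invariance plus local-limit route therefore discards information, and when made precise it only yields $\e_1 = O((\vva^*)^{1/3})$. Indeed, iterating $\|\law(W)-\law(W)*\pi_1\|\tv \le C\vva^*$ gives $\|\law(W)-\law(W)*\pi_k\|\tv \le Ck\vva^*$, while the lattice local CLT contributes $O(k^{-1/2})$ for $\dtv(\law(W)*\pi_k,\law(W)*\pi_k*\d_{\ej})$; optimising in~$k$ leaves $(\vva^*)^{1/3}$. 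Since in typical applications $\vva^* \asymp n^{-1/2}$ and $L \asymp n^{-1/2}$, a term $(\vva^*)^{1/3} \asymp n^{-1/6}$ cannot be absorbed into $L(1+n^{1/2}\vva^*) \asymp n^{-1/2}$, so the stated bound does not follow.

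The paper instead uses the full exchangeability directly (Lemma~\ref{Exch-dtv-lemma}, following R\"ollin \& Ross~(2015)). For each fixed $J\in\JJ$ one has
\[
   0 \Eq \ex\{f(W')I[\xi=J]\} - \ex\{f(W)I[-\xi=J]\}
     \Eq \ex\{f(W+J)Q^J(W)\} - \ex\{f(W)Q^{-J}(W)\},
\]
and since $q^J=q^{-J}$ this gives $\dtv(\law(W+J),\law(W)) \le \vva^J + \vva^{-J}$ in one step, with no iteration. Chaining along the finite decomposition $\ej = \sum_l J_l\uj$ then gives $\e_1 \le \vvb^* \le C_\JJ\,\vva^*$, and the same lemma handles $\e_1(\xi)$ and $\ex\|R_2(W)\|_1$ (see Corollary~\ref{Dec-useful-bounds}). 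With these $O(\vva^*)$ bounds in hand, the terms of Theorem~\ref{AX-exch-thm} collapse to $L(1+n^{1/2}\vva^*) + \ex|R_1(W)|$ as claimed, using $Ln^{1/2} \ge \sqrt{\moda}\,\chi\{\tr(\s^2)\}^{-3/2} \ge \sqrt{\moda}$ to absorb the bare $\vva^*$ terms into $Ln^{1/2}\vva^*$.
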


The key elements in the bound are~$L$, which is the analogue of the Lyapunov ratio
appearing in the Berry--Esseen error bound,
$\vva^*$, which can often be shown to be small by a variance calculation, and the
inaccuracy of the linear regression~\Ref{prelim-Exch-props}, expressed by $\ex|R_1(W)|$.
In examples such as the one that follows, the resulting bound is of order $O(n^{-1/2}\log n)$.
The theorem can be deduced from Theorem~\ref{AX-exch-thm}, Lemma~\ref{Exch-dtv-lemma} and
Corollary~\ref{Dec-useful-bounds}.

As an example, 
suppose that~$G_n$ is an $r$-regular graph on~$n$ vertices.
Let the vertices be coloured independently, each with one of~$m$ colours, the
probability of choosing colour~$i$ being~$p_i > 0$, $1\le i\le m$.  Let~$N_i$ denote the
number of vertices having colour~$i$, and let~$M_i$ denote the number of edges joining
pairs of vertices that both have colour~$i$.  We are interested in approximating the
joint distribution of
\[
    W \Def (M_1,\ldots,M_m,N_1,\ldots,N_{m-1})  \ =:\ (W_1,\ldots,W_m,W_{m+1},\ldots,W_{2m-1}),
\]
when~$n$ becomes large, while $r$, $m$ and $p_1,\ldots,p_m$ remain fixed; the detailed
structure of~$G_n$ does not appear in the approximation.  Multivariate normal approximation
in a smooth metric was proved by Rinott \& Rotar~(1996), and in the convex
sets metric by Chen, Goldstein \& Shao~(2011, pp. 333--334), both with error of order
$O(n^{-1/2}\log n)$.  Theorem~\ref{Dec-exch-thm} shows that the same order of error actually holds in
total variation, provided that $m \ge 3$; the details are given in Section~\ref{Dec-monochrome}.
For $m=2$, the distribution of~$W$ is concentrated on a sub-lattice of \adbr{$\Z^{3}$,} so that
discrete normal approximation is not good (but it can be deduced for the pair $(M_1,N_1)$).  
The exchangeable pair is constructed by realizing~$W$ from a random colouring of the vertices,
and then randomly re-colouring one of the vertices to give~$W'$.  
The resulting regression is exact, implying that $R_1(w)= 0$ for all~$w$. The set~$\JJ$ is fixed and finite,
so that $L = O(n^{-1/2})$, and, for each~$J$, $\ex(Q^J(W) - q^J)^2$ can simply be shown
to be of order~$O(n^{-1})$ --- the calculation is as for the variance of a sum of~$n$ very weakly
dependent indicators.  If $m\ge 3$, each coordinate vector~$\ej$, $1\le j\le 2m-1$, can be
obtained as a sum of elements of~$\JJ$, but this cannot be done if $m=2$.  The analogous problem,
in which the proportions of vertices of each colour are held (almost) fixed, but randomly
assigned to the vertices, can be treated in much the same way.  The exchangeable pair is
obtained by swapping the colours of two vertices, and the treatment of $\ex(Q^J(W) - q^J)^2$
becomes a little messier.

\section{Discrete normal approximation}\label{ADB-discrete-normal}
\setcounter{equation}{0}
In this section, we show that Theorem~\ref{ADB-first-approx-thm} can be used to 
establish approximation by distributions from the discrete normal family. To do so,
we need first to establish properties of distributions in the family that are related
to the conditions of Theorem~\ref{ADB-first-approx-thm}. 
We always assume that $n \ge d^4$.

We first note the following simple lemma, proved in Section~\ref{L5.1}, in which 
moments of the discrete normal random variable $W \sim \DN_d(nc,n\S)$ are bounded by
expressions similar to those of $\NN_d(nc,n\S)$.

\begin{lemma}\label{AX-first-lemma}
For $l \in \Z_+$, we have
$$ 
  \mbox{\rm(a)}\quad \ex \nS{W-nc}^l %\Eq } \sum_{X \in \Z^d} \nS{X-nc}^{l} \int_{[X]} \f_n(t)\,dt  
         \Le  C(l) (nd)^{l/2}, \phantom{XXXXXx}
$$
whenever $n \ge  1/\lmin(\S)$, for universal constants $C(l)$ \adbr{given in Section~\ref{L5.1}.}  
In addition, for each $1\le j\le d$ and $n\ge1$,
\[
   \mbox{\rm(b)}\quad \ex(W_j-nc_j)^2 % \Eq  \sum_{X \in \Z^d} (X_j-nc_j)^2 \int_{[X]} \f_n(t)\,dt 
              \Le \frac12 + 2n\S_{jj},
    \phantom{XXXXX}
\]
and, for $l\in\Z_+$ and for universal constants~$C'(l)$ \adbr{given in Section~\ref{L5.1}},
\eqs
 \phantom{XXX} \mbox{\rm(c)}\quad \ex\{[\S^{-1}(W-nc)]_j^{2l}\} 
     % &=&  \sum_{X \in \Z^d} [\S^{-1}(X-nc)]_j^{2l} \int_{[X]} \f_n(t)\,dt \phantom{XXXX} \\
           &\le& n^l C'(l)(1 + (\S^{-1})_{jj}^l),
\ens
whenever $n \ge d/\{4(\lmin(\S))^2\}$.
\end{lemma}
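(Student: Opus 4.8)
The plan is to compare the discrete normal random variable $W \sim \DN_d(nc,n\S)$ with a genuine continuous Gaussian $Z \sim \NN_d(nc,n\S)$ via the coupling inherent in the definition: $W$ is obtained by rounding $Z$ to the nearest lattice point, so $W = Z + R$ with $|R|_\infty \le 1/2$ almost surely (more precisely, $W$ and $Z$ can be coupled so that $W$ is the lattice point of the box containing $Z$). This reduces each of (a), (b), (c) to the corresponding well-known Gaussian moment bound plus an error term controlled by $\|R\|_\infty \le 1/2$ and the triangle inequality in the relevant norm.

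For part (a): writing $\nS{W-nc} \le \nS{Z-nc} + \nS{R}$, and noting $\nS{R} = \sqrt{R^T\S^{-1}R} \le \sqrt{\lmax(\S^{-1})}\,|R| \le \sqrt{d/(4\lmin(\S))}$, I would apply the inequality $(a+b)^l \le 2^{l-1}(a^l+b^l)$ to get $\ex\nS{W-nc}^l \le 2^{l-1}(\ex\nS{Z-nc}^l + (d/(4\lmin(\S)))^{l/2})$. Since $\S^{-1/2}(Z-nc)/\sqrt n \sim \NN_d(0,I)$, we have $\ex\nS{Z-nc}^l = n^{l/2}\ex|\NN_d(0,I)|^l = n^{l/2}\cdot 2^{l/2}\G((d+l)/2)/\G(d/2)$, which is bounded by $C''(l)(nd)^{l/2}$ for a universal constant $C''(l)$ (using $\G((d+l)/2)/\G(d/2) \le ((d+l)/2)^{l/2} \le d^{l/2}$ for $l$ fixed and $d\ge1$). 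Absorbing the second term using the hypothesis $n \ge 1/\lmin(\S)$ (so $(d/(4\lmin(\S)))^{l/2} \le (nd/4)^{l/2}$) gives the claimed form with an explicit $C(l)$. Part (b) is the one-dimensional version: $\ex(W_j-nc_j)^2 \le \ex(Z_j - nc_j + R_j)^2 = \ex(Z_j-nc_j)^2 + 2\ex\{(Z_j-nc_j)R_j\} + \ex R_j^2$; here I would instead argue more directly that since $W_j$ is $Z_j$ rounded, $(W_j - nc_j)^2 \le (Z_j-nc_j)^2 + |Z_j - nc_j| + 1/4$, take expectations, and use $\ex(Z_j-nc_j)^2 = n\S_{jj}$ together with Jensen $\ex|Z_j-nc_j| \le \sqrt{n\S_{jj}} \le \half(1 + n\S_{jj})$ to land at $\frac12 + 2n\S_{jj}$. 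Part (c) is entirely analogous to (a): $[\S^{-1}(W-nc)]_j = [\S^{-1}(Z-nc)]_j + [\S^{-1}R]_j$, the first summand is a centred Gaussian with variance $n(\S^{-1})_{jj}$, the second is bounded by $|\S^{-1}R|_\infty \le \|\S^{-1}\|\,|R| \le \lmax(\S^{-1})\sqrt d/2$, and applying $(a+b)^{2l} \le 2^{2l-1}(a^{2l}+b^{2l})$ together with the Gaussian moment formula and the hypothesis $n \ge d/(4(\lmin(\S))^2)$ (which bounds $(\lmax(\S^{-1})\sqrt d/2)^{2l} = (\sqrt d/(2\lmin(\S)))^{2l} \le n^l$) yields $n^l C'(l)(1 + (\S^{-1})_{jj}^l)$ with explicit $C'(l)$.

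The only real subtlety, and the step I would be most careful about, is making the coupling $W = Z + R$ with $\|R\|_\infty \le 1/2$ precise and measurable: the map sending $z$ to the centre of the half-open box $[i_1-\half,i_1+\half)\times\cdots$ containing it is the coordinatewise rounding map, which is Borel measurable, so defining $W := \mathrm{round}(Z)$ gives a random vector with exactly the law $\DN_d(nc,n\S)$ and $W - Z = R$ with every coordinate of $R$ in $[-\half,\half)$; this makes all the above estimates rigorous. Everything else is routine manipulation of Gaussian moments and norm equivalences, and the stated constants $C(l)$, $C'(l)$ can simply be read off from the final inequalities (this is what the parenthetical ``given in Section~\ref{L5.1}'' refers to).
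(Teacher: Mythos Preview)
Your approach is essentially the paper's: both couple $W$ to the underlying Gaussian $Z$ via rounding, so that $W=Z+R$ with $\|R\|_\infty\le\tfrac12$, then use $(a+b)^k\le 2^{k-1}(a^k+b^k)$ together with the norm bound on~$R$ and the hypothesised lower bounds on~$n$. Two small slips, though. In~(a), the claimed inequality $((d+l)/2)^{l/2}\le d^{l/2}$ fails when $l>d$; the paper instead bounds $\ex|N_d|^l \le (\ex|N_d|^{2l})^{1/2} \le d^{l/2}\sqrt{\ex N_1^{2l}}$ via Cauchy--Schwarz and Jensen, giving $C(l)=2^l\sqrt{(2l)!/(2^l l!)}$. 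In~(b), your route through $\sqrt{n\S_{jj}}\le\tfrac12(1+n\S_{jj})$ actually yields $\tfrac34+\tfrac32 n\S_{jj}$, not the stated $\tfrac12+2n\S_{jj}$; simply use $(a+b)^2\le 2a^2+2b^2$ with $a=Z_j-nc_j$, $b=R_j$ to get $2n\S_{jj}+\tfrac12$ directly.
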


\msk

The next lemma, proved in Section~\ref{L5.2}, establishes an approximate integration by parts formula
for multivariate discrete normal distributions.
\adbr{We write $I\nuh(X) := I[\nS{X - nc} \le n\h/3]$ for any $\h > 0$, and we say}
that $C \in \KKS$ if~$C$ is an increasing function of $\lmax(\S),1/\lmin(\S)$,
and $C(\d) \in \KKS(\d)$ if $C(\d) \in \KKS$ for each fixed~$\d$. We also define
\eq\label{Oct-ps-sigma-def}
    \smh_\S(n) \Def \frac6{n\sqrt{\lmin(\S)}},
\en
noting that \adbb{its inverse is} $\smh_\S^{-1}(\d) = \smh_\S(\d)$.

\begin{lemma}\label{AX-main-lemma}
Suppose that $W \sim \DN_d(nc,n\S)$.
% and that $f\colon \Z^d\to\re$ satisfies $f(\lfloor nc \rfloor) = 0$, 
% where $\lfloor nc \rfloor := (\lfloor nc_1 \rfloor,\ldots,\lfloor nc_d \rfloor)^T$.  
Then there exist constants
$n_{\ref{AX-main-lemma}}\in\KKS$ and $C\ui_{\ref{AX-main-lemma}}(\d),C\ut_{\ref{AX-main-lemma}}(\d), 
C\uh_{\ref{AX-main-lemma}}(\d)\in \KKS(\d)$, 
such that, for any $n \ge \max\{n_{\ref{AX-main-lemma}},\smh_\S(\d)\}$ 
and for any function $f\colon \Z^d \to \re$, we have 
{\rm 
\begin{enumerate}
 \item $|\ex\{\D f(W)^T b\, I\nud(W)\}
     - n^{-1}\ex\{(f(W)\,(W-nc)^T\S^{-1}b\, I\nud(W)\}| $\\
      {}\hglue0.5in$\Le d^{1/2}C\ui_{\ref{AX-main-lemma}}(\d) n^{-1}\nl{b} \|f\|^\S_{n\d/2,\infty}$;
 \item $|\ex\{\D f(W)^T B(W-nc)\, I\nud(W)\} $\\
    {}\hglue0.1in$\mbox{} - \ex\{f(W)\,[n^{-1}(W-nc)^T \S^{-1}B(W-nc) - \tr B]\, I\nud(W) \}|$\\ 
     {}\hglue0.5in$ \Le d^{1/2}C\ut_{\ref{AX-main-lemma}}(\d)
              n^{-1/2} \|B\|_1\,  \|f\|^\S_{n\d/2,\infty} + \sjd |B_{jj}| \|\D f\|^\S_{n\d/2,\infty}$;\\
 \item $|\ex\{\D f(W)^T B(W-nc)\, I\nud(W)\} $\\
    {}\hglue0.05in$\mbox{} - \ex\{f(W)\,[n^{-1}(W-nc)^T \S^{-1}B(W-nc) - \tr B]\, I\nud(W) \}|$\\ 
     {}\hglue0.2in$ \Le d C\uh_{\ref{AX-main-lemma}}(\d)
              n^{-1/2} \sjd |(\ej)^TB|\,  \|f\|^\S_{n\d/2,\infty} + \sjd |B_{jj}| \|\D f\|^\S_{n\d/2,\infty}$,
\end{enumerate}
}
\nin for any $d$-vector~$b$ and any $d\times d$ matrix~$B$.
The constants $n_{\ref{AX-main-lemma}}$, $C\ui_{\ref{AX-main-lemma}}(\d)$, $C\ut_{\ref{AX-main-lemma}}(\d)$
\adbr{and $C\uh_{\ref{AX-main-lemma}}(\d)$ are defined in~\Ref{Dec-n5.2-def}, \adbb{\Ref{Dec-C5.2(1)-def}, 
\Ref{Dec-C5.2(2)-def}} and following~\Ref{ADB-new-C-dash-defs}, respectively.} 
\end{lemma}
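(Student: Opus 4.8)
The plan is to reduce all three statements to a single approximate integration-by-parts identity for the discrete normal distribution, obtained by comparing $W \sim \DN_d(nc,n\S)$ with the underlying continuous Gaussian $\NN_d(nc,n\S)$. First I would set up notation: write $\f$ for the $\NN_d(nc,n\S)$ density, so that $\pr[W = w]$ is the integral of $\f$ over the unit box $B(w)$ centred at $w$. For the continuous Gaussian, the exact integration-by-parts identity $\int (\partial_j g)\,\f = n^{-1}\int g\,[(x-nc)^T\S^{-1}]_j\,\f$ holds for nice~$g$. The task is to transfer this to the discrete difference operator $\D_j$ acting on $f\colon\Z^d\to\re$, picking up error terms that can be controlled by $\|f\|^\S_{n\d/2,\infty}$ and $\|\D f\|^\S_{n\d/2,\infty}$. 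Concretely, for part~1, I would write $\ex\{\D f(W)^Tb\,I\nud(W)\}$ as a sum over $w$ of $(f(w+\ej)-f(w))\,b_j\,\pr[W=w]$, reindex the $f(w+\ej)$ term, and compare the resulting telescoped expression with $n^{-1}\ex\{f(W)(W-nc)^T\S^{-1}b\,I\nud(W)\}$. The discrepancy splits into (i) the difference between $\pr[W=w]$ and $\pr[W=w+\ej]$, which by the mean value theorem is $\int_{B(w)}(\partial_j\f)(x)\,dx$ up to a second-order Taylor error, and $\partial_j\f(x) = -n^{-1}[(x-nc)^T\S^{-1}]_j\f(x)$; (ii) boundary terms coming from the indicator $I\nud$, i.e. the set of $w$ for which $w$ and $w+\ej$ lie on opposite sides of the $n\d/3$-ball in $\nS{\cdot}$. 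Term (i) produces the main term plus an error of order $n^{-1}$ relative to $\|f\|^\S_{\cdot,\infty}$ after summing $|(x-nc)^T\S^{-1}|^2\f(x)$-type quantities, using Lemma~\ref{AX-first-lemma}(c) to bound the relevant second moments of $\S^{-1}(W-nc)$. Term (ii) is where the restriction $n\ge\smh_\S(\d)$ and the slightly larger truncation radius $n\d/2$ in the norm on the right come in: the `collar' between the $n\d/3$-ball and the $n\d/2$-ball has Gaussian mass that is exponentially small in $n$, hence absorbed into the $O(n^{-1})$ constant.

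For parts~2 and~3, the scheme is the same but with $b$ replaced by the vector $B(w-nc)$, so the quantity being differenced is $f(w)$ times a linear function of~$w$. Writing $\D_j[f(w)\,\psi(w)] = \D_j f(w)\,\psi(w+\ej) + f(w)\,\D_j\psi(w)$ with $\psi(w) = [B(w-nc)]_j$, the extra term $f(w)\,\D_j\psi(w) = f(w)B_{jj}$ accounts for the $\tr B$ term on the left and for the $\sum_j|B_{jj}|\,\|\D f\|^\S_{n\d/2,\infty}$ error term (the shift $\psi(w+\ej)$ versus $\psi(w)$ contributes a further $\D_j f$-type correction). Summation by parts against the density difference then yields $n^{-1}\ex\{f(W)(W-nc)^T\S^{-1}B(W-nc)\,I\nud(W)\}$ as the main term, while the Taylor remainders and collar terms give the stated bounds. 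The difference between parts~2 and~3 is purely the matrix norm used to measure~$B$: part~2 keeps the crude estimate $\|B\|_1$ on the off-diagonal contribution, whereas part~3 refines it to $\sum_j|(\ej)^TB|$ — essentially a row-wise $\ell^1$ bound — at the cost of an extra factor $d^{1/2}$ in the constant, obtained by a more careful application of Cauchy–Schwarz when bounding $\ex\{|f(W)|\,|[(W-nc)^T\S^{-1}B(W-nc)]|\}$ row by row rather than all at once.

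The main obstacle I expect is the careful bookkeeping of the boundary (collar) terms generated by the truncation indicator $I\nud$. Because $\D_j$ shifts by a lattice vector, the two indicators $I\nud(W)$ and $I\nud(W+\ej)$ do not match on a thin shell near the sphere $\nS{x-nc} = n\d/3$, and one must show that (a) the number of lattice points in this shell times the maximal value of the summand there is $O(n^{-1})$ relative to the norms on the right-hand side, and (b) this is why the norm on the right is taken over the strictly larger ball of radius $n\d/2$ rather than $n\d/3$ — so that $f$ and its difference are actually controlled at every lattice point that enters a boundary term. Quantifying this requires a Gaussian tail estimate showing that $\pr[n\d/3 \le \nS{W-nc} \le n\d/2]$ decays like $e^{-cn}$ for a constant $c = c(\d,\S) > 0$, and then verifying that the bound survives multiplication by the at most polynomially large factors $|(W-nc)^T\S^{-1}b|$ or $|(W-nc)^T\S^{-1}B(W-nc)|$; Lemma~\ref{AX-first-lemma}(a) and~(c) supply exactly the moment bounds needed. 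The second-order Taylor error in approximating $\pr[W=w+\ej] - \pr[W=w]$ by $\int_{B(w)}\partial_j\f$ is routine by comparison — it is controlled by $\sup|\partial_j^2\f|$ integrated against $|f|$, which is again $O(n^{-1})$ after using the Gaussian moment bounds — so the real work is in making the collar estimate uniform in the data $(\S,\d,b,B)$ in the form demanded by the $\KKS$ and $\KKS(\d)$ dependence claimed in the statement.
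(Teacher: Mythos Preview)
Your plan matches the paper's proof: reindex the discrete sum to isolate $\pr[W=X-\ej]-\pr[W=X]$, approximate this by $n^{-1}[\S^{-1}(X-nc)]_j\,\pr[W=X]$ via a Taylor expansion of the Gaussian density ratio (the paper writes $D_j(t)=\f_n(t-\ej)/\f_n(t)-1$ and uses $|e^x-1-x|\le\half x^2e^{|x|}$), bound the remainder with Lemma~\ref{AX-first-lemma}(c), and for parts (b)--(c) decompose $B$ into rank-one pieces $\ej(\eii)^T$ or $\ej b(j)^T$, the diagonal ones producing the $\tr B$ and $\|\D f\|$ terms exactly as you say. One simplification: the collar term you flag as the main obstacle is in fact the easiest piece --- the paper uses nothing more than a fourth-moment Markov inequality from Lemma~\ref{AX-first-lemma}(a) to get $\pr[\nS{W-nc}>n\d/6]\le (6/\d)^4 d^2 C(4)\,n^{-2}$ (and a third-moment version with Cauchy--Schwarz for part (b), where the summand carries an extra $|W_i-nc_i|$), so no exponential Gaussian tail is needed.
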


\adbr{With the help of the lemmas above,} we can now show that, if~$W$ has the discrete normal distribution
$\DN_d(nc,n\S)$, then it satisfies the conditions of Theorem~\ref{ADB-first-approx-thm}, with $\e_1 \le c_1n^{-1/2}$,
 $\max\{\e_{20},\e_{21}\} \le c_2 d^{5/2}n^{-1/2}$ and~$\e_{22}=0$, \adbr{and hence that
the conditions of Theorem~\ref{ADB-first-approx-thm} imply a bound on the error of approximating the
distribution of a random $d$-vector by $\DN_d(nc,n\S)$.}

\begin{theorem}\label{AX-discrete-normal}
For~$\S$ positive definite, suppose that $\s^2$, positive definite, and~$A$ are such
that $A\S + \S A^T + \s^2 = 0$; write $\Lbar := d^{-1}\tr(\s^2)$.  Then, if $W \sim \DN_d(nc,n\S)$, 
for any $n \ge \max\{n_{\ref{AX-main-lemma}},\smh_\S(\d)\}$, we have
$$
\begin{array}{rl}
 {\rm (i)}& \ex\nS{W-nc}^2 \Le dC(2) n; \\[1ex]
 {\rm (ii)}& \dtv(\law(W),\law(W+\ej)) \Le  C_{\ref{AX-discrete-normal}}\ui n^{-1/2}, 
             \mbox{ for each } 1\le j\le d; \\[1ex]
 {\rm (iii)}&  |\ex\{\ABA_n h(W)I[\nS{W-nc} \le n\d/3]\}|   \\[1ex]
   &\quad   \Le d^{5/2}n^{-1/2} \Lbar C_{\ref{AX-discrete-normal}}\ut(\d) 
           (\|h\|^\S_{n\d/2,\infty} + n^{1/2}\|\D h\|^\S_{n\d/2,\infty}),
\end{array}
$$
\nin where $\ABA_n$ is as defined in~\Ref{ADB-approx-gen},  $C(2)$ is as in Lemma~\ref{AX-first-lemma}, 
and $C_{\ref{AX-discrete-normal}}\ui$ and~$C_{\ref{AX-discrete-normal}}\ut(\d)$
are \adbb{continuous
functions of $\|A\|/\Lbar$, $\Sp'(\s^2/\Lbar)$ and $\Sp'(\S)$}; %the elements of $\Sp'(\S)$ and~$\Sp'(\s^2/\Lbar)$;
$C_{\ref{AX-discrete-normal}}\ui$ is given in~\Ref{Dec-C5.3(1)-def}, $C_{\ref{AX-discrete-normal}}\ut(\d)$  
implicitly in~\Ref{Dec-C5.3(2)-def}.  \adbb{Hence a random $d$-vector satisfying the conditions
of Theorem~\ref{ADB-first-approx-thm} with $n \ge n_{\ref{AX-discrete-normal}}$ and $0 < \d < \half\td_0(A,\s^2)$
has
\[
    \dtv(\law(W),\DN_d(nc,n\S)) \Le  C_{\ref{AX-discrete-normal}}(v,\d)
           (d^4 (n^{-1/2} + \e_1) +\e_{20} + d^{1/4}\e_{21} +d^{1/2}\e_{22})\log n ,
\]
with 
\eqs
   C_{\ref{AX-discrete-normal}}(v,\d) &:=&  
       C_{\ref{ADB-first-approx-thm}}(v,\d) + C_{\ref{ADB-first-approx-thm}}(C(2),\d)
           (1 + C_{\ref{AX-discrete-normal}}\ui + C_{\ref{AX-discrete-normal}}\ut(\d));\\
   n_{\ref{AX-discrete-normal}} &:=&
         \max\{n_{\ref{ADB-first-approx-thm}}(v,\d),  n_{\ref{AX-main-lemma}},\smh_\S(\d) \}.
\ens
}
\end{theorem}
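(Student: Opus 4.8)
The plan is to verify conditions~(i)--(iii) of Theorem~\ref{ADB-first-approx-thm} directly for $W\sim\DN_d(nc,n\S)$, read off the resulting $\e$-quantities, and then deduce the displayed bound from two applications of Theorem~\ref{ADB-first-approx-thm} combined by the triangle inequality. Condition~(i) is immediate from Lemma~\ref{AX-first-lemma}(a) with $l=2$, which gives $\ex\nS{W-nc}^2\le C(2)nd=dC(2)n$; its hypothesis $n\ge1/\lmin(\S)$ is absorbed into $n_{\ref{AX-discrete-normal}}$. For condition~(ii) I would use a direct Gaussian estimate rather than the earlier lemmas: writing $\phi$ for the $\NN_d(nc,n\S)$ density and $B(w)=w+[-\half,\half)^d$, one has $\pr[W=w]-\pr[W=w-\ej]=\int_{B(w)}(\phi(x)-\phi(x-\ej))\,dx$, so that
\[
  \dtv(\law(W),\law(W+\ej)) \Le \half\int_{\re^d}|\phi(x)-\phi(x-\ej)|\,dx
   \Le \half\int_0^1\!\int_{\re^d}|\partial_j\phi(x-t\ej)|\,dx\,dt
   \Eq \half\,\ex\bigl|[(n\S)^{-1}(Y-nc)]_j\bigr|
\]
with $Y\sim\NN_d(nc,n\S)$; Cauchy--Schwarz bounds the right side by $\half\{[(n\S)^{-1}]_{jj}\}^{1/2}=\half n^{-1/2}\{(\S^{-1})_{jj}\}^{1/2}\le\half n^{-1/2}\{\lmin(\S)\}^{-1/2}$, which supplies the constant $C_{\ref{AX-discrete-normal}}\ui$.

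The substantive step is condition~(iii), and this is where Lemma~\ref{AX-main-lemma} is used. Split $\ABA_n h(W)=T_1+T_2$ with $T_1:=\frac n2\tr(\s^2\D^2 h(W))$ and $T_2:=\D h(W)^T A(W-nc)$. Applying Lemma~\ref{AX-main-lemma}(1) with $f=\D_k h$ and $b=\ej$, then multiplying by $\frac n2\s^2_{jk}$ and summing over $1\le j,k\le d$, rewrites $\ex\{T_1\,I\nud(W)\}$ as $\half\,\ex\{\D h(W)^T\s^2\S^{-1}(W-nc)\,I\nud(W)\}$ plus an error $E_1'$ which, using $\|\D_k h\|^\S_{n\d/2,\infty}\le\|\D h\|^\S_{n\d/2,\infty}$ and $\nl{\ej}=1$, is at most a multiple of $d^{1/2}C\ui_{\ref{AX-main-lemma}}(\d)\,\|\s^2\|_1\,\|\D h\|^\S_{n\d/2,\infty}$ --- the factor $n$ cancels. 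Next apply Lemma~\ref{AX-main-lemma}(2) (or~(3)) with $f=h$ and $B=\half\s^2\S^{-1}$ to that term, and with $f=h$ and $B=A$ to $\ex\{T_2\,I\nud(W)\}$: both become $\pm\ex\{h(W)[n^{-1}(W-nc)^T\S^{-1}B(W-nc)-\tr B]\,I\nud(W)\}$, with errors $E_1''$ and $E_2$. The Lyapunov relation $\s^2=-A\S-\S A^T$ gives $\S^{-1}\s^2\S^{-1}=-\S^{-1}A-A^T\S^{-1}$ and $\tr(\s^2\S^{-1})=-2\tr A$, and since the scalar $(W-nc)^T A^T\S^{-1}(W-nc)$ equals $(W-nc)^T\S^{-1}A(W-nc)$, the term arising from $T_1$ is exactly $-\ex\{h(W)[n^{-1}(W-nc)^T\S^{-1}A(W-nc)-\tr A]\,I\nud(W)\}$, which cancels the term arising from $T_2$. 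Thus $|\ex\{\ABA_n h(W)\,I\nud(W)\}|\le|E_1'|+|E_1''|+|E_2|$, and estimating the matrix $1$-norms and the row norms occurring in these errors by powers of $d$ times spectral norms shows each is at most $d^{5/2}n^{-1/2}\,\Lbar\,C_{\ref{AX-discrete-normal}}\ut(\d)$ times $\|h\|^\S_{n\d/2,\infty}$ or $n^{1/2}\|\D h\|^\S_{n\d/2,\infty}$; this is precisely~(iii), with $\e_{22}=0$ (no second-difference term survives) and $\e_{20}=\e_{21}=d^{5/2}n^{-1/2}C_{\ref{AX-discrete-normal}}\ut(\d)$.

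For the displayed bound: a random $d$-vector $W$ satisfying the conditions of Theorem~\ref{ADB-first-approx-thm} with $n\ge n_{\ref{AX-discrete-normal}}$ and $0<\d<\half\td_0(A,\s^2)$ has $\dtv(\law(W),\Pi\nud)$ bounded by $C_{\ref{ADB-first-approx-thm}}(v,\d)(d^3 n^{-1/2}+d^4\e_1+\e_{20}+d^{1/4}\e_{21}+d^{1/2}\e_{22})\log n$; applying the same theorem to $W'\sim\DN_d(nc,n\S)$, whose $\e$-quantities were just shown to be $\e_1\le C_{\ref{AX-discrete-normal}}\ui n^{-1/2}$, $\e_{20}=\e_{21}=d^{5/2}C_{\ref{AX-discrete-normal}}\ut(\d)n^{-1/2}$, $\e_{22}=0$ with $v=C(2)$, bounds $\dtv(\Pi\nud,\DN_d(nc,n\S))$ by $C_{\ref{ADB-first-approx-thm}}(C(2),\d)(1+C_{\ref{AX-discrete-normal}}\ui+C_{\ref{AX-discrete-normal}}\ut(\d))\,d^4 n^{-1/2}\log n$; the triangle inequality and the stated definitions of $C_{\ref{AX-discrete-normal}}(v,\d)$ and $n_{\ref{AX-discrete-normal}}$ then give the assertion. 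One has to be a little careful matching the truncation level $n\d/3$ and the norm radius $n\d/2$ appearing in part~(iii) with those required in Condition~(iii) of Theorem~\ref{ADB-first-approx-thm} (which uses radius $n\td_0/4$); this compatibility is what forces the restriction $\d<\half\td_0$.

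The main obstacle is step~(iii). Conceptually it is transparent --- $\ABA_n$ is the lattice analogue of the Ornstein--Uhlenbeck generator whose stationary law is $\NN_d(nc,n\S)$, so (iii) ought to hold with $\e$'s of order $n^{-1/2}$ --- but the work lies in selecting the correct clause of Lemma~\ref{AX-main-lemma} at each stage, carrying the indicator $I\nud$ consistently through the manipulation (which is possible exactly because $I\nud$ sits on both sides of every estimate in Lemma~\ref{AX-main-lemma}), and tracking the dimension dependence so as to land on the $d^{5/2}n^{-1/2}$ form with no $n\|\D^2 h\|$ contribution.
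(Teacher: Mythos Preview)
Your proof is correct and, for Parts~(i), (iii) and the final triangle-inequality step, follows essentially the same route as the paper: split $\ABA_n h$ into its first- and second-difference parts, apply Lemma~\ref{AX-main-lemma}(a) to convert $\tr(\s^2\D^2 h)$ into a first-difference expression $\D h^T\s^2\S^{-1}(W-nc)$, then apply Lemma~\ref{AX-main-lemma}(b)/(c) with $B=A$ and $B=\s^2\S^{-1}$ to reduce everything to quadratic forms that cancel via the Lyapunov identity, and finally bound $\|A\|_1,\|\s^2\|_1,\|\s^2\S^{-1}\|_1\le d^{3/2}\times(\text{spectral quantities})$ to reach the $d^{5/2}n^{-1/2}$ rate with no $\|\D^2 h\|$ term.

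The genuine difference is your treatment of Part~(ii). The paper proves it by taking $f$ bounded, $b=\ej$ and $\d=1$ in Lemma~\ref{AX-main-lemma}(a), and then invoking Lemma~\ref{AX-first-lemma}(c) for the moment $\ex\{[\S^{-1}(W-nc)]_j^2\}$ together with a Chebyshev bound on $\pr[\nS{W-nc}>n/3]$ to handle the truncation; this yields the specific constant in~\Ref{Dec-C5.3(1)-def}. You instead compare the Gaussian densities directly, bounding $\dtv(\law(W),\law(W+\ej))\le\half\int|\partial_j\phi|=\half\,\ex|[(n\S)^{-1}(Y-nc)]_j|\le\half n^{-1/2}\{(\S^{-1})_{jj}\}^{1/2}$. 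This is more elementary, avoids both auxiliary lemmas and the truncation, and gives a sharper constant; the paper's route has the virtue of reusing the same discrete integration-by-parts machinery already needed for Part~(iii), so no additional ingredient enters the proof. Note that your constant differs from the one referenced in the theorem statement via~\Ref{Dec-C5.3(1)-def}, though it has the required dependence on~$\Sp'(\S)$.
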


\begin{proof}
Part~(i) is immediate from~\Ref{AX-DN-moments}, with $v = C(2)$. For Part~(ii), we pick $\d = 1$, and 
then take $b = \ej$ and any function~$f$
with $\|f\|_\infty \le 1$ in Lemma~\ref{AX-main-lemma}(a).  This gives 
$$
   \ex|\D_j f(W)I_n^1(W)| \Le d^{1/2}C\ui_{\ref{AX-main-lemma}}(1) n^{-1} + n^{-1/2}\sqrt{C'(1)(1 + (\S^{-1})_{jj})}, 
$$
in view of Lemma~\ref{AX-first-lemma}(c). For the remaining part of $|\ex\{\D_j f(W)\}|$,
using $\|\D_j f\|_\infty \le 2$, we have
\[
    \ex|\D_j f(W) I[\nS{W-nc} > n/3]| \Le  18dC(2)/n,
\]
by Chebyshev's inequality and from Part~(i), and the estimate follows because $n \ge d^2$,
with 
\eq\label{Dec-C5.3(1)-def}
 C_{\ref{AX-discrete-normal}}\ui \Def C\ui_{\ref{AX-main-lemma}}(1) + \sqrt{C'(1)(1 + (\S^{-1})_{jj})}
          + 18C(2).
\en

For Part~(iii), we 
% work with $\tih(\cdot) := h(\cdot) - h(\lfloor nc \rfloor)$, which leaves
% the differences of the function unchanged, and 
use Lemma~\ref{AX-main-lemma}(b). This gives
\eqa
   \lefteqn{\bigl|\ex\bigl\{\D h(W)^T A(W-nc)I\nud(W)\bigr\} } \non\\
   &-&  \ex\{h(W)\,[n^{-1}(W-nc)^T \S^{-1}A(W-nc) - \tr A]\,I\nud(W)\} \bigr|\non\\
   &&\quad\mbox{} \Le d^{1/2}C\ut_{\ref{AX-main-lemma}}(\d) 
          n^{-1/2} \|A\|_1  \|h\|^\S_{n\d/2,\infty} + \sjd |A_{jj}| \|\D h\|^\S_{n\d/2,\infty} .\label{1} 
\ena
Then, since
\eqs
   \tr(\s^2\D^2h(W)) \Eq  \sid\sjd \s^2_{ij}\D_j f_i(W) ,
\ens
where $f_i(W) := \D_i h(W)$, it follows from Lemma~\ref{AX-main-lemma}(a), 
with $f = f_i$ and with~$b$ the $i$-th column of~$\s^2$, that
\eqa
  \lefteqn{\Bigl|n\ex\{\tr(\s^2\D^2h(W))I\nud(W)\} }\non \\
   &&\mbox{}\quad - \ex\Bigl\{ \sid\sjd \s^2_{ij} \D_ih(W) \{\S^{-1}(W-nc)\}_j I\nud(W) \Bigr\} \Bigr|
   \non\\  &&\Le d^{1/2}C\ui_{\ref{AX-main-lemma}}(\d) \|\s^2\|_1      
                \|\D h\|^\S_{n\d/2,\infty};  \label{2}
\ena
note also that
\eqa 
  \lefteqn{\ex\Bigl\{ \sid\sjd \s^2_{ij} \D_ih(W) \{\S^{-1}(W-nc)\}_j I\nud(W) \Bigr\}} \non\\
   &&\Eq \ex\{\D h(W)^T \s^2 \S^{-1}(W-nc)I\nud(W)\}. 
\ena
But now, from Lemma~\ref{AX-main-lemma}(c), 
\eqa
  \lefteqn{\bigl|\ex\{\D h(W)^T \s^2 \S^{-1}(W-nc)I\nud(W)\} } \non\\
   &&\mbox{}\ - \ex\{h(W)\,[n^{-1}(W-nc)^T \S^{-1}\s^2 \S^{-1}(W-nc) - \tr(\s^2\S^{-1})]I\nud(W)\} \bigr|\non\\
        &\le&    d C\uh_{\ref{AX-main-lemma}}(\d)
           n^{-1/2} \sjd |(\ej)^T \S^{-1}\s^2| \|h\|^\S_{n\d/2,\infty} + \sjd \bigl|[\s^2\S^{-1}]_{jj}\bigr|\,
                  \|\D h\|^\S_{n\d/2,\infty} \non\\
   &\le& d C\uh_{\ref{AX-main-lemma}}(\d)
           n^{-1/2} \{\lmin(\S)\}^{-1} \|\s^2\|_1 \|h\|^\S_{n\d/2,\infty} + \|\s^2\S^{-1}\|_1\, \|\D h\|^\S_{n\d/2,\infty}. 
              \label{3}
\ena
Hence, and since 
\eqs
    &&\|A\|_1 \Le d^{3/2}\|A\|; \quad \|\s^2\|_1 \Le d^{3/2}\lmax(\s^2) 
\ens
and
\eqs
    \|\s^2\S^{-1}\|_1 \Le d^{3/2}\|\s^2\S^{-1}\| \Le d^{3/2}\lmax(\s^2)/\lmin(\S),
\ens
it follows from \Ref1, \Ref2 and~\Ref3 that
\eqa
  \lefteqn{\ex \{\ABA_n h(W) I\nud(W)\}} \non\\
   &=& \ex\bigl\{\bigl(\tr\{A(W-nc)\D h(W)^T\}  + \half n\tr\{\s^2\D^2h(W)\}\bigr)I\nud(W)\bigr\}  \non\\
   &=& \ex\Bigl\{h(W)\,\bigl[\half n^{-1}(W-nc)^T(2\S^{-1}A + \S^{-1}\s^2 \S^{-1})(W-nc) \non\\
   &&\quad\mbox{}\qquad\qquad - \tr A
               - \half \tr(\s^2_\S) \bigr]I\nud(W)\Bigr\} + \th, \label{4}
\ena
where
\eqa
   \leqn{ |\th| \Le d^{1/2}C\ut_{\ref{AX-main-lemma}}(\d) 
             n^{-1/2} \|A\|_1  \|h\|^\S_{n\d/2,\infty} } \non\\
   &&\mbox{}+ \|A\|_1 \|\D h\|^\S_{n\d/2,\infty}
     + \half d^{1/2}C\ui_{\ref{AX-main-lemma}}(\d) \|\s^2\|_1      
                \|\D h\|^\S_{n\d/2,\infty} \non \\
    &&\mbox{}\ + \half d C\uh_{\ref{AX-main-lemma}}(\d)
           n^{-1/2} \{\lmin(\S)\}^{-1} \|\s^2\|_1 \|h\|^\S_{n\d/2,\infty} 
                              + \|\s^2\S^{-1}\|_1\, \|\D h\|^\S_{n\d/2,\infty}\} \non \\
    &&\le\ d^{5/2}n^{-1/2}\Lbar 
     C_{\ref{AX-discrete-normal}}\ut(\d)\bigl(\|h\|^\S_{n\d/2,\infty} + n^{1/2}\|\D h\|^\S_{n\d/2,\infty}\bigr),
            \label{Dec-C5.3(2)-def}
\ena
and $C_{\ref{AX-discrete-normal}}\ut(\d)$ is a function of~$\|A\|/\Lbar$ and the elements of $\Sp'(\S)$, 
$\Sp'(\s^2/\Lbar)$.

Finally, for any $y$ and~$B$, we have $y^TBy = y^TB^Ty = \half y^T(B+B^T)y$, so that
\eqs
   \lefteqn{y^T(2\S^{-1}A + \S^{-1}\s^2 \S^{-1})y \Eq y^T(\S^{-1}A + A^T\S^{-1} + \S^{-1}\s^2 \S^{-1})y }\\
     && \Eq y^T\S^{-1}(A\S + \S A^T + \s^2)\S^{-1}y \Eq 0,\phantom{XXXXXXXX}
\ens
from~\Ref{ADB-Sigma-eqn}, and 
\eqs
    \tr(\s^2_\S) 
        &=& -\tr(\S^{-1/2}A\S^{1/2} + \S^{1/2}A^T\S^{-1/2}) \Eq -2\tr A.
\ens
This, with~\Ref4, establishes that
\eqa
  \leqn{ |\ex \{\ABA_n h(W)I\nud(W)\}| }\non\\
       &&\Le d^{5/2} n^{-1/2} C_{\ref{AX-discrete-normal}}\ut(\d) 
          \Bigl\{ \|h\|^\S_{n\d/2,\infty} + n^{1/2}\|\D h\|^\S_{n\d/2,\infty} \Bigr\}, \phantom{XXX}\label{5}
\ena
as required.  \adbr{The final conclusion follows from the triangle inequality.}
\end{proof}

\adbb{
Discrete normal approximation using Theorem~\ref{AX-discrete-normal} involves checking the conditions of 
Theorem~\ref{ADB-first-approx-thm}.  These can be replaced with analogous conditions in which the norm $\nS{\cdot}$
is replaced by the Euclidean norm.  Here, the parameter~$n$ is also chosen to
standardize $d^{-1}\tr(\S)$; we omit the routine proof.
}

\begin{theorem}\label{ADB-DN-approx-thm}
\adbb{
Let $W$ be a % square integrable 
random vector in~$\Z^d$ with mean $\m := \ex W$ and
positive definite covariance matrix $V := \ex\{(W-\m)(W-\m)^T\}$; define 
$n := \lceil d^{-1}\tr V \rceil$, $c := n^{-1}\m$ and $\S := n^{-1}V$.
Let~$A$ be a $d\times d$ Hurwitz matrix
such that $\s^2 := -(A\S + \S A^T)$ is positive definite, and write $\Lbar := d^{-1}\tr \s^2$.
Set
\[
    \h_0 \Def \frac16\min\Blb \sqrt{\lmin(\S)}, \frac{\lmin(\s^2)}{24\|A\|\sqrt{\Rh(\S)}}\Brb 
                \Eq \tfrac16 \td_0 \sqrt{\lmin(\S)}.
\]
Then, for any $0 < \h \le \h_0$, there exist continuous functions
$C_{\ref{ADB-DN-approx-thm}}(\h), n_{\ref{ADB-DN-approx-thm}}(\h)$ 
of $\|A\|/\Lbar$, $\Sp'(\s^2/\Lbar)$, $\Sp'(\S)$ and~$\h$,
not depending on $d$ or~$n$, with the following property:
if, for some  $\e_1$, $\e_{20}$, $\e_{21}$ and $\e_{22}$, 
and  for some $n \ge n_{\ref{ADB-DN-approx-thm}}(\h)$,
{\rm 
\begin{enumerate}
% \item $\ex|W -nc|^2 \Le dvn$;
 \item $\dtv(\law(W),\law(W+\ej)) \Le \e_1$, for each $1\le j\le d$;
 \item $ |\ex\{\ABA_n h(W)\}I[|W-nc| \le n\h]| \\[1ex]
     \mbox{}\qquad \Le \Lbar (\e_{20}\|h\|_{3n\h_0/2,\infty} + \e_{21}n^{1/2}\|\D h\|_{3n\h_0/2,\infty} 
       + \e_{22}n\|\D^2 h\|_{3n\h_0/2,\infty})$,
\end{enumerate}
}
\nin for all $h\colon \Z^d\to\re$, then it follows that 
\eqs
    \leqn{\dtv(\law(W),\DN_{d}(nc,n\S))} \\
    &&\quad \Le C_{\ref{ADB-DN-approx-thm}}(\h)(d^3n^{-1/2}+ \adbr{d^4}\e_1 
       + \e_{20} + d^{1/4}\e_{21} + d^{1/2}\e_{22})\log n .
\ens
}
\end{theorem}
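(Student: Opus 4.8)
The plan is to reduce Theorem~\ref{ADB-DN-approx-thm} to the closing assertion of Theorem~\ref{AX-discrete-normal}, by checking that, with the stated $n$, $c$ and~$\S$, the vector~$W$ meets Conditions~(i)--(iii) of Theorem~\ref{ADB-first-approx-thm}. Two of these come for free: since $nc=\m$ and $n\S=V=\ex\{(W-\m)(W-\m)^T\}$, we have $\ex\nS{W-nc}^2=\tr(\S^{-1}V)=\tr(nI)=nd$, so Condition~(i) holds with $v=1$; and Condition~(ii) of Theorem~\ref{ADB-first-approx-thm} is exactly Condition~(1) here, with the same~$\e_1$. The work is entirely in deducing the elliptical-ball Condition~(iii) from the Euclidean-ball Condition~(2).

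For this I would fix~$h$, set $\d':=3\h/\sqrt{\lmax(\S)}$, and use the comparison $\lmax(\S)^{-1/2}|x|\le\nS{x}\le\lmin(\S)^{-1/2}|x|$. Since $\h\le\h_0=\tfrac16\td_0\sqrt{\lmin(\S)}$, one has $\d'\le\tfrac12\td_0/\sqrt{\Rh(\S)}\le\tfrac12\td_0$, so $\d'$ is admissible in Theorem~\ref{ADB-first-approx-thm} (after shrinking~$\h$ infinitesimally if a strict inequality is wanted; the scalar~$\S$ case is trivial), and one has the chain of ball inclusions
\[
   \{\nS{x}\le n\d'/3\}\ \subseteq\ \{|x|\le n\h\}\ \subseteq\ \{|x|\le\tfrac32 n\h_0\}\ \subseteq\ \{\nS{x}\le\tfrac14 n\td_0\},
\]
the first by the choice of~$\d'$, the last because $\h_0=\tfrac16\td_0\sqrt{\lmin(\S)}$. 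The last two inclusions guarantee that every Euclidean sup-norm over a ball of radius $3n\h_0/2$ or $n\h$ is dominated by $\|\cdot\|^\S_{n\td_0/4,\infty}$, the norm permitted in Condition~(iii). I would then split
\eqs
   \ex\{\ABA_n h(W)I[\nS{W-nc}\le n\d'/3]\} &=& \ex\{\ABA_n h(W)I[|W-nc|\le n\h]\}\\
   && {}-\ex\{\ABA_n h(W)I[|W-nc|\le n\h,\ \nS{W-nc}>n\d'/3]\},
\ens
the first term on the right being controlled directly by Condition~(2) and the sup-norm domination.

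The ``mismatch'' term is the one part done by hand. On $\{|w-nc|\le n\h\}$ (whence $\nS{w-nc}\le\tfrac14 n\td_0$) one has the crude bound $|\ABA_n h(w)|\le\tfrac n2\|\s^2\|_1\|\D^2 h\|^\S_{n\td_0/4,\infty}+\sqrt d\,n\h\,\|A\|\,\|\D h\|^\S_{n\td_0/4,\infty}$, while Chebyshev's inequality and Condition~(i) give $\pr[\nS{W-nc}>n\d'/3]\le 9d/(n\d'^2)$; multiplying, the mismatch term is at most $\Lbar(\tilde\e_{21}n^{1/2}\|\D h\|^\S_{n\td_0/4,\infty}+\tilde\e_{22}n\|\D^2 h\|^\S_{n\td_0/4,\infty})$ with $\tilde\e_{21}=9d^{3/2}\h\|A\|/(\d'^2\Lbar n^{1/2})$ and $\tilde\e_{22}=9d\|\s^2\|_1/(2\d'^2\Lbar n)$. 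Hence~$W$ satisfies Condition~(iii) of Theorem~\ref{ADB-first-approx-thm} with $\e_{20}$ unchanged, $\e_{21}$ replaced by $\e_{21}+\tilde\e_{21}$ and $\e_{22}$ by $\e_{22}+\tilde\e_{22}$, where $\tilde\e_{21}=O(d^{3/2}n^{-1/2})$ and $\tilde\e_{22}=O(d^{5/2}n^{-1})$, the implied constants being continuous functions of $\|A\|/\Lbar$, $\Sp'(\s^2/\Lbar)$, $\Sp'(\S)$ and~$\h$ only (here $\d'$ is a function of~$\h$ and~$\lmax(\S)$, and $\|\s^2\|_1\le d^{3/2}\lmax(\s^2)$).

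To finish I would invoke the closing statement of Theorem~\ref{AX-discrete-normal} with $v=1$ and $\d=\d'$, and absorb the extra terms $d^{1/4}\tilde\e_{21}\log n=O(d^{7/4}n^{-1/2}\log n)$ and $d^{1/2}\tilde\e_{22}\log n=O(d^{3}n^{-1}\log n)$, both of which are $O(d^3 n^{-1/2}\log n)$, into the $C_{\ref{ADB-DN-approx-thm}}(\h)\,d^3n^{-1/2}\log n$ term. Taking $C_{\ref{ADB-DN-approx-thm}}(\h)$ a suitable multiple of $C_{\ref{AX-discrete-normal}}(1,\d')$ and $n_{\ref{ADB-DN-approx-thm}}(\h):=n_{\ref{AX-discrete-normal}}$ then gives the stated bound, with the asserted dependence since $\d'$ depends only on~$\h$ and~$\lmax(\S)$. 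The main obstacle I anticipate is exactly this geometric bookkeeping: one must pick~$\d'$ and use the gaps between the truncation radii ($n\h$, $n\d'/3$) and the sup-norm radii ($3n\h_0/2$, $n\td_0/4$) so that the four balls nest correctly while the Chebyshev correction for the mismatched truncation region stays $O(n^{-1/2})$ in the $\e_{21}$-slot and $O(n^{-1})$ in the $\e_{22}$-slot; the numerical constants in the definition of~$\h_0$ are precisely calibrated for this, and the remainder is routine.
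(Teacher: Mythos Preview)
Your proposal is correct and follows essentially the same route that the paper intends (the paper declares the proof ``routine'' and omits it): verify Condition~(i) of Theorem~\ref{ADB-first-approx-thm} directly from $n\S=V$ (yielding $v=1$), carry Condition~(ii) over as Condition~(1), and reduce the Euclidean-ball Condition~(2) to the $\nS{\cdot}$-ball Condition~(iii) by the inclusions $\{\nS{x}\le n\d'/3\}\subset\{|x|\le n\h\}\subset\{\nS{x}\le n\td_0/4\}$ together with a Chebyshev bound on the annulus; then invoke Theorem~\ref{AX-discrete-normal}. Your bookkeeping of $\d'$, the nested radii, and the absorption of $\tilde\e_{21},\tilde\e_{22}$ into the $d^3 n^{-1/2}$ term is exactly the geometric calibration the constants in~$\h_0$ are designed for.
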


\ignore{
\begin{theorem}\label{ADB-DN-approx-thm}
\adbr{With the notation and definitions of Theorem~\ref{AX-discrete-normal},
write
\[
    \h_0 \Def \min\Blb \sqrt{\lmin(\S)}, \frac{\lmin(\s^2)}{24\|A\|\sqrt{\Rh(\S)}}\Brb 
                \Eq \td_0 \sqrt{\lmin(\S)}.
\]
}  Then, for any $0 < \h \le \h_0$ and any $v > 0$, there exist 
$C_{\ref{ADB-DN-approx-thm}}(v,\h),n_{\ref{ADB-DN-approx-thm}}(v,\h) < \infty$ 
depending only on $A/\Lbar$, $\s^2/\Lbar$, $v$ and~$\h$, but not on $d$ or~$n$,
with the following property:
if $W$ is any random vector in $\Z^d$ such that, for some  $\e_1$, $\e_{20}$, $\e_{21}$ and $\e_{22}$, 
and  for some $n \ge n_{\ref{ADB-DN-approx-thm}}(v,\h)$,
{\rm 
\begin{enumerate}
 \item $\ex|W -nc|^2 \Le dvn$;
 \item $\dtv(\law(W),\law(W+\ej)) \Le \e_1$, for each $1\le j\le d$;
 \item $ |\ex\{\ABA_n h(W)\}I[|W-nc| \le n\h/6]| \\[1ex]
     \mbox{}\qquad \Le \Lbar (\e_{20}\|h\|_{n\h_0/4,\infty} + \e_{21}n^{1/2}\|\D h\|_{n\h_0/4,\infty} 
       + \e_{22}n\|\D^2 h\|_{n\h_0/4,\infty})$,
\end{enumerate}
}
\nin for all $h\colon \Z^d\to\re$, then it follows that 
\eqs
    \leqn{\dtv(\law(W),\DN_{d}(nc,n\S))} \\
    &&\quad \Le C_{\ref{ADB-DN-approx-thm}}(v,\h)(d^3n^{-1/2}+ \adbr{d^4}\e_1 
       + \e_{20} + d^{1/4}\e_{21} + d^{1/2}\e_{22})\log n .
\ens
\end{theorem}
}

\nin The estimate required in Condition~(b), apart from the truncation to $|W-nc| \le n\h/6$,
is typical of those that are needed for multivariate normal approximation using Stein's
method.  The extra work needed, to translate
multivariate normal approximation into discrete normal approximation in total variation, 
\adbr{lies in establishing Condition~(a) with a suitably small~$\e_1$.  Since, from
Theorem~\ref{AX-discrete-normal},  Condition~(a) is satisfied
with $\e_1 = O(n^{-1/2})$ if $W \sim \DN_d(nc,n\S)$ and~$\S$ is non-singular,
the triangle inequality for a general~$W$ yields
$$
  \dtv(\law(W),\law(W+\ej)) \Le 2\dtv(\law(W),\DN_d(nc,n\S)) + O(n^{-1/2}),
$$
so that~$\dtv(\law(W),\law(W+\ej))$ has to be small if total variation approximation of~$\law(W)$ by the discrete normal 
is to be accurate.}

\ignore{
In applications of the theorem,  the `large' parameter~$n$
needs to be specified.  Condition~(a) of Theorem~\ref{ADB-DN-approx-thm} suggests
that it should be chosen to be comparable to $d^{-1}\ex|W-\ex W|^2$, as is also implied by the use
of $n\S$ as covariance in the approximating discrete normal distribution. This matter
is discussed further in Remark~\ref{Oct-standard}, in the context of linear regression pairs.  
}

We make some effort to make explicit the typical dependence of the error bounds on the dimension~$d$.
This is largely for comparison with the error bounds derived by Bentkus~(2003) and Fang~(2014) for
approximation, with respect to the convex sets metric,
of standardized sums of independent random vectors by the standard $d$-dimensional
normal distribution.  Here, since multiplicative standardization makes no sense in the domain
of random vectors with integer coordinates, there are more quantities than just dimension that
may affect the sizes of the approximation errors.  Nonetheless, we attempt some comparison with the 
above approximations.
To do so, we think of many quantities, such as the eigenvalues of $\s^2$, $A$ and~$\S$, as
being bounded away from zero and infinity as~$d$ varies, and the traces of these matrices
thus being thought of as having 
order~$d$.  This is because, in the standardized setting, using the Stein approach as
in G\"otze~(\adbb{1991}) or Fang~(2014), one has $\s^2 = 2I$, $A = -I$ and $\S = I$.
Our bounds then also involve the values of other parameters, in particular~$\|A\|$ and the elements
of $\Sp'(\s^2)$ and $\Sp'(\S)$, in a way that can be deduced from our arguments, but that we do not attempt to
make explicit, other than that their dependence on these parameters is continuous.
However, we always work in terms of approximations for fixed values of~$n$ and
the parameters of a problem, so that implicit orders of magnitude play no direct part in
the results that we obtain.

\section{Linear regression pairs}\label{regression-property}
\setcounter{equation}0
In this section, we establish a discrete normal approximation theorem for the distribution of
a random vector~$W$, when a copy~$W'$ can be defined on the same probability space, in such
a way that $\ex\{W' \giv W\}$ is approximately a linear function of~$W$. There are many examples 
where this is the case, including those given in Rinott \& Rotar~(1996) and Reinert \& R\"ollin~(2009).
% The theorem is easier to apply if the pair $(W,W')$ is exchangeable, as discussed in
% Section~\ref{exchangeable} below.

Suppose, then, that~$(W,W')$ is a pair of random integer valued $d$-vectors, defined
on the same probability space and having the same distribution.  
Assume that $\ex\{|W|^3\} < \infty$, and write $\m := \ex W$.
Let~$\xi$ denote the difference $W'-W$, so that $\ex\xi = 0$, and set $\s^2 := \ex\{\xi\xi^T\}$,
assumed positive definite.  Suppose that~$\xi$ exhibits an almost linear regression on~$W$, and
that the conditional variance $\s^2(W) := \ex\{\xi\xi^T \giv W\}$ is more or less constant as
a function of~$W$.  Specifically, assume that,
for some $n>0$ and for some $d\times d$ Hurwitz matrix~$A$ with spectral norm~$\moda$, 
we have
\eq\label{Exch-props}
   \begin{array}{rl}
   \ex\{\xi \giv W\} &\Eq n^{-1}A(W-\m) + n^{-1/2}\moda^{1/2}R_1(W); \\[1ex]
   \s^2(W) &\Def \ex\{\xi\xi^T \giv W\} \Eq \s^2 + R_2(W),
   \end{array}
\en 
where~$\ex|R_1(W)|$ and $\ex\|R_2(W)\|_1$ are to be thought of as small.
These two quantities appear explicitly in the bound on 
the error in our discrete normal approximation, and,
clearly, $\ex\{R_1(W)\} = 0$ and $\ex\{R_2(W)\} = 0$.  Let~$\S$ be the positive definite
solution to $A\S + \S A^T + \s^2 = 0$.

\begin{remark}\label{Oct-standard}
{\rm
Note that, in~\Ref{Exch-props}, multiplying $n$ and~$A$ by the same positive constant~$c$
does not change the regression, but~$\S$ is divided by~$c$.  This leaves both $n\S$,
the asymptotic approximation to~$\var W$, and~$\moda/n$
unchanged, the latter implying that~$R_1(W)$ remains the same also.  
The effective data for the problem are the distributions of $\xi$ and~$W$, and in particular~$\s^2$
and $\var W$, and also $\hA := A/n$, which is typically `small'. In order to circumvent the indeterminacy, 
one can compute~$\hS := n\S$, typically `large', by solving 
$\hA\hS + \hS \hA^T + \s^2 = 0$.  Then $\tn := n/\moda$,
$\wA := \tn \hA = A/\moda$ and $\wS := \hS/\tn$ are the same for all~$c$, yield the same regression 
matrix~$\wA /\tilde n = \hA$, and can be used as a standard verson, if required.
}
\end{remark}

We now define further parameters
\eqa
   \adbr{\lla_1} &:=& \adbr{\half\lmin(\S);} \quad \adbr{\vnew} \Def \tr(\s^2_\S)/(d\lla_1); \non \\
    \adbr{\chi} &:=& \adbr{\ex\{|\xi|^3\};}\quad L \Def (\moda/n)^{1/2}\chi \{\tr(\s^2)\}^{-3/2}; \label{ADB-exch-defs-2} \\
       \chi_\S &:=& \ex|\S^{-1/2}\xi|^3;
               \quad L_\S \Def (\moda/n)^{1/2}\chi_\S \{\tr(\s^2_\S)\}^{-3/2} \Le L\Rh(\S)^{3/2}, 
                      \non
\ena
and set $Z := z(W)$, where $z(w) := (nd\vnew)^{-1/2}\S^{-1/2}(w-\m)$.  $L$, $L_\S$ and~$Z$ all
involve $A$, $n$ and~$\S$ only through the standardized quantities $n/\moda$ and $n\S$.  We then
assume that
%, for $\ps_{11}, \ps_{12}$ and~$\ps_2$ such that
% \eq\label{Exch-ps-cond}
%   \ps_{11} \le 1/2,\quad \ps_{12} \Le 1/4 \quad\mbox{and}\quad \ps_2 \Le 1/4,
% \en
the following inequalities hold:
\eqa
%   \begin{array}{rl}
    \{\moda/\lla_1\}^{1/2} \ex\{(1+|Z|)\,|\S^{-1/2}R_1(W)|\} &\le& \half(\tr(\s^2_\S))^{1/2}(1 + \ex|Z|^2);
           \label{Exch-R-bnds-11}\\%[1ex]
    \{\moda/\lla_1\}^{1/2} \ex\{|Z|(1 + |Z|)|\S^{-1/2}R_1(W)|\} &\le& \quarter(\tr(\s^2_\S))^{1/2}(1 + \ex|Z|^3). 
      \phantom{XXX}
            \label{Exch-R-bnds-12}
%   \ex\{(1 + |Z|)\|\S^{-1/2}R_2(W)\S^{-1/2}\|_1 \} &\le& \quarter\tr(\s^2_\S)(1 + \ex|Z|^3).
%                \label{Exch-R-bnds-2}
%   \end{array}
\ena
They can reasonably be expected to be satisfied if $|R_1(W)|$ is indeed small.
In particular, \Ref{Exch-R-bnds-11}--\Ref{Exch-R-bnds-12} are satisfied if
\eqa
% \{\ex|R_1(W)|^2\}^{1/2} &\le& (\tr(\s^2))^{1/2};\non\\ 
             \{\ex|\S^{-1/2}R_1(W)|^3\}^{1/3} &\le&  \eighth(\lla_1\tr(\s^2_\S)/\moda)^{1/2}. 
                 \label{Exch-simpler-bnds}
%  \{\ex\|\S^{-1/2}R_2(W)\S^{-1/2}\|_1^{3/2}\}^{2/3} &\le& \eighth\tr(\s^2_\S). \label{Exch-simpler-bnds}
\ena

Under the above conditions, the second and third moments of
$|Z|$ can be suitably bounded; the proof is given in Section~\ref{Exch-moments-appx}.

\begin{lemma}\label{Exch-moments}
If \Ref{Exch-R-bnds-11} and~\Ref{Exch-R-bnds-12} hold,
% are valid for $\ps_{11},\ps_{12}$ and~$\ps_2$ satisfying~\Ref{Exch-ps-cond}, 
and if $n/\lla_1 \ge 1$, then 
\[
    \ex|Z|^2 \Le 2;\quad \ex|Z|^3 \Le m_3 \Def 2\Bigl(1 + \frac{10\chi_\S}{(\tr(\s^2_\S))^{3/2}}\Bigr),
\]
where $Z = z(W)$, with $z(w)$ as defined above.  In particular, for any $\d > 0$,
\[
    \frac n{\moda} \pr[\nS{W-\m} > n\d\moda^{-1/2}] 
        \Le 2d^{3/2}\d^{-3}((\moda/n)^{1/2} + 10 L_\S) \Blb \frac{2\lbar(\s^2_\S)}{\lmin(\s^2_\S)}\Brb^{3/2}.
\]
\end{lemma}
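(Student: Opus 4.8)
The plan is to run the exchangeable pair argument in the standardized variable $Z=z(W)$, using the identity $\ex\{g(W')-g(W)\}=0$ with the test functions $g(w)=|z(w)|^2$ and then $g(w)=|z(w)|^3$, and converting each into a relation for the corresponding absolute moment of $|Z|$. Write $\zeta:=z(W')-z(W)=(nd\vnew)^{-1/2}\S^{-1/2}\xi$ and $\tilde A:=\S^{-1/2}A\S^{1/2}$. Two facts will be used in both computations. First, the regression~\Ref{Exch-props} becomes $\ex\{\zeta\giv W\}=n^{-1}\tilde AZ+n^{-1/2}\moda^{1/2}(nd\vnew)^{-1/2}\S^{-1/2}R_1(W)$, and the Lyapunov equation~\Ref{ADB-Sigma-eqn} gives $\tilde A+\tilde A^T=-\s^2_\S$, so that $Z^T\tilde AZ=-\half Z^T\s^2_\S Z$. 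Second, from the definition of $\vnew$, $\ex|\zeta|^2=(nd\vnew)^{-1}\tr(\s^2_\S)=\lla_1/n$ and $\ex|\zeta|^3=(nd\vnew)^{-3/2}\chi_\S$. Since $\ex\{|W|^3\}<\infty$, both $\ex|Z|^2$ and $\ex|Z|^3$ are finite, so the rearrangements below are legitimate.

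For the second moment, expanding $|Z+\zeta|^2-|Z|^2=2Z^T\zeta+|\zeta|^2$, taking expectations and then replacing $\zeta$ by its conditional mean gives
\[
   \ex\{Z^T\s^2_\S Z\}=\lla_1+2\{\moda\lla_1/\tr(\s^2_\S)\}^{1/2}\,\ex\{Z^T\S^{-1/2}R_1(W)\}.
\]
Bounding the last expectation by $\ex\{|Z|\,|\S^{-1/2}R_1(W)|\}$ and invoking~\Ref{Exch-R-bnds-11}, whose constants are arranged precisely so that the various factors cancel, yields $|\ex\{Z^T\s^2_\S Z\}-\lla_1|\le\lla_1(1+\ex|Z|^2)$. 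Combining this with $\ex\{Z^T\s^2_\S Z\}\ge\lmin(\s^2_\S)\ex|Z|^2$ and the relation $\lmin(\s^2_\S)=2\lla_1$ built into the definition of $\lla_1$, rearranging gives $\ex|Z|^2\le2$.

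For the third moment I would use the second-order Taylor estimate $\bigl|\,|Z+\zeta|^3-|Z|^3-3|Z|Z^T\zeta\,\bigr|\le3(|Z|+|\zeta|)|\zeta|^2$, which holds because $x\mapsto|x|^3$ is $C^2$ with Hessian of operator norm at most $6|x|$. Taking expectations, using $\ex\{|Z+\zeta|^3\}=\ex\{|Z|^3\}$, replacing $\zeta$ by its conditional mean and bounding the $R_1$-contribution in absolute value,
\[
   \tfrac{\lla_1}{n}\ex|Z|^3\le\ex\{|Z|\,|\zeta|^2\}+\ex|\zeta|^3+n^{-1/2}\moda^{1/2}(nd\vnew)^{-1/2}\ex\{|Z|^2|\S^{-1/2}R_1(W)|\}.
\]
Here the term $\ex\{|Z|\,|\zeta|^2\}$ is bounded by $(\ex|Z|^3)^{1/3}(\ex|\zeta|^3)^{2/3}$ via H\"older (which keeps $R_2(W)$ out of the bound, as it must be, since $R_2$ does not appear in $m_3$); $\ex|\zeta|^3$ is evaluated as above, $n/\lla_1\ge1$ is used to absorb the surviving powers of $n$, and the $R_1$-term is controlled through~\Ref{Exch-R-bnds-12}. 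With $\kappa:=\chi_\S/(\tr(\s^2_\S))^{3/2}$ (which is $\ge1$ by Jensen's inequality), the inequality reduces to $\ex|Z|^3\le(\ex|Z|^3)^{1/3}\kappa^{2/3}+\kappa+\tfrac14(1+\ex|Z|^3)$, and Young's inequality applied to the first term on the right then bounds $\ex|Z|^3$ by a fixed multiple of $1+\kappa$, giving $\ex|Z|^3\le m_3$.

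The final assertion then follows from $\nS{W-\m}^2=(nd\vnew)|Z|^2$ and Markov's inequality applied to $|Z|^3$: $\pr[\nS{W-\m}>n\d\moda^{-1/2}]=\pr[|Z|>\d(n/(\moda d\vnew))^{1/2}]\le m_3(\moda d\vnew/n)^{3/2}\d^{-3}$, and substituting $d\vnew=2d\lbar(\s^2_\S)/\lmin(\s^2_\S)$ and $m_3(\moda/n)^{1/2}=2\{(\moda/n)^{1/2}+10L_\S\}$ produces exactly the stated bound. I expect the cubic step to be the main obstacle: one has to carry out the Taylor expansion of $|\cdot|^3$ and organise the H\"older and Young estimates so that the coefficient of $\ex|Z|^3$ on the right stays strictly below $1$ — which is what makes the rearrangement into an a priori bound possible — while simultaneously keeping $R_2(W)$ from entering. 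The quadratic step is a routine first-order computation, and the tail bound is a one-line Markov estimate.
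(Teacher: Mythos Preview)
Your argument is correct.  The second–moment step and the Markov tail bound are essentially identical to the paper's.  For the third moment the paper takes a different route: instead of $|z|^3$ it uses the smoothed test function $h(z)=(1+z^Tz)^{3/2}$, expands to second order, and devotes about a page to a three–case analysis ($|\zeta|\ge|z|\vee1$, $1\ge|\zeta|\ge|z|$, $|\zeta|\le|z|$) establishing $|h(z+\zeta)-h(z)-Dh(z)^T\zeta-\tfrac12\zeta^TD^2h(z)\zeta|\le22|\zeta|^3$.  The cross term $\ex\{|Z|\,|\S^{-1/2}\xi|^2\}$ is then controlled by the AM--GM splitting $|Z|\,|\S^{-1/2}\xi|^2\le\tfrac13\{(a|Z|)^3+2a^{-3/2}|\S^{-1/2}\xi|^3\}$ with $a=(d\vnew\lla_1)^{1/3}$, rather than by your H\"older--Young combination.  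Your route is more economical: the observation that $x\mapsto|x|^3$ is already $C^2$ on~$\re^d$ with $\|D^2f(x)\|\le6|x|$ makes the surrogate $(1+|z|^2)^{3/2}$ and its lengthy remainder analysis unnecessary, and the first--order remainder bound $3(|Z|+|\zeta|)|\zeta|^2$ drops straight out of the mean--value form.  Both approaches keep $R_2(W)$ out of the bound and arrive at the same $m_3$.  (Incidentally, both your argument and the paper's proof implicitly use $\lmin(\s^2_\S)=2\lla_1$; the stated definition $\lla_1=\half\lmin(\S)$ in~\Ref{ADB-exch-defs-2} appears to be a typo for $\half\lmin(\s^2_\S)$, as is also forced by the form of the final displayed inequality of the lemma.)
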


\begin{remark}
{\rm
 Note that
\eq\label{Oct-standard-event}
    \Blb \nS{W-\m} > \frac{n\d}{\sqrt{\moda}} \Brb \Eq 
          \Blb |Z| > \d\sqrt{\frac n{\moda}}\,\sqrt{\frac{\lmin(\s^2_\S)}{2\lbar(\s^2_\S)}}\Brb
\en
involves only standardized quantities.
}
\end{remark}

We are now in a position to prove a discrete normal approximation theorem.  To state it,
we introduce some further notation:
\eq\label{Exch-dtv-defs}
 \begin{array}{rl}
  \e_1 &:=\ \max_{1\le j\le d}\dtv(\law(W),\law(W+\ej)); \\[1ex]
   \e_1(\xi) &:=\ \max_{1\le j\le d}\dtv(\law(W \giv \xi),\law(W+\ej \giv \xi)).
 \end{array} 
\en

\begin{theorem}\label{AX-exch-thm} 
Assume that~$(W,W')$ is a pair of random integer valued $d$-vectors, such that $\law(W) = \law(W')$
and that $\ex|W|^3 < \infty$; write $\m := \ex W$.
Suppose that $\xi := W'-W$ satisfies the regression condition~\Ref{Exch-props}, for matrices $A$ and~$\s^2$ such 
that~$A$is Hurwitz and~$\s^2$ is positive definite; let~$\S$ 
be the positive definite solution of $A\S + \S A^T + \s^2 = 0$. 
% Assume also that~$|\xi|$ has finite third moment~$\chi$.
Define $\ex|\xi|^3 := \chi$, %$\lla_1 := \half\lmin(\s^2_\S)$, 
$\Lbar := d^{-1}\tr(\s^2)$ and 
$L := (\moda/n)^{1/2}\chi\{\tr(\s^2)\}^{-3/2}$,
and assume that \Ref{Exch-R-bnds-11} and~\Ref{Exch-R-bnds-12} hold.  
Let $\wA$ and~$\wS$ be as in Remark~\ref{Oct-standard}.
Then there exist constants~$n_0$ and~$C$, depending on $\|\wA\|$ and $\s^2$, 
such that, if $n/\moda \ge n_0$, we have 
\eqs
   \leqn{ d_{TV}\bigl(\law(W),\DN_{d}(\mu,n\S)\bigr) }\\
  &&\Le  C \log n\bigl\{d^{3}(\moda/n)^{1/2} + \adbr{d^4}\e_1  + d^{1/4}\ex|R_1(W)|  \\
   &&\qquad\qquad\mbox{}\hskip1in     + d^{1/2}\ex\|R_2(W)\|_1  + d^3L + d^2\ex\{|\xi|^3 \e_1(\xi)\} \bigr\}.
\ens
\end{theorem}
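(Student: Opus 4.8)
The plan is to deduce Theorem~\ref{AX-exch-thm} from Theorem~\ref{ADB-DN-approx-thm} by verifying its two hypotheses (a) and~(b) with explicitly controlled $\e$-quantities, using the regression structure~\Ref{Exch-props} to evaluate the action of~$\ABA_n$. The indeterminacy in $(n,A,\s^2)$ is handled by passing to the standardized version $(\tn,\wA,\wS)$ of Remark~\ref{Oct-standard}, so that the constants $C$ and $n_0$ depend only on $\|\wA\|$ and~$\s^2$; all the error terms in the final bound are already written in terms of $\moda/n$ and $n\S$, which are invariant under this rescaling.

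First I would set $c := \m/n$ so that $nc = \m$, and check the moment bound $\ex\nS{W-\m}^2 \le dvn$ (or its Euclidean analogue needed for Theorem~\ref{ADB-DN-approx-thm}): this follows from Lemma~\ref{Exch-moments}, which gives $\ex|Z|^2 \le 2$, i.e. $\ex\nS{W-\m}^2 = nd\vnew\,\ex|Z|^2 \le 2nd\vnew$, with $\vnew = \tr(\s^2_\S)/(d\lla_1)$ a bounded function of the standardized data. Condition~(a) of Theorem~\ref{ADB-DN-approx-thm}, namely $\dtv(\law(W),\law(W+\ej)) \le \e_1$, is taken as input with $\e_1$ as in~\Ref{Exch-dtv-defs}; this is exactly the term $d^4\e_1$ appearing in the conclusion. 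The substance is Condition~(b): I would expand, for $h\colon\Z^d\to\re$,
\[
   \ex\{\ABA_n h(W)\,I[|W-\m|\le n\h]\}
\]
by a Taylor/telescoping argument on the exchangeable pair. Writing $\ex\{h(W') - h(W)\} = 0$ and expanding $h(W+\xi) - h(W)$ to second order in the increment~$\xi$, one gets
\[
   0 = \ex\bigl\{\D h(W)^T\xi + \tfrac12 \sum_{j,k}\D^2_{jk}h(W)\,\xi_j\xi_k + \text{(third-order remainder)}\bigr\};
\]
then taking conditional expectations given~$W$ and inserting~\Ref{Exch-props} turns $\ex\{\D h(W)^T\xi\giv W\}$ into $n^{-1}\D h(W)^TA(W-\m)$ plus the $R_1$-term, and $\ex\{\sum\D^2_{jk}h(W)\xi_j\xi_k\giv W\}$ into $\tr(\s^2\D^2 h(W))$ plus the $R_2$-term. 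After multiplying through by~$n$ this reconstructs $\ABA_n h(W)$ up to (i) the third-order remainder, controlled by $\|\D^3 h\|\cdot\ex|\xi|^3$ — handled by bounding third differences of~$h$ in terms of first differences via the set~$\JJ$ structure, or absorbed into the $\e_{22}n\|\D^2h\|$ slot using $\chi$, contributing the $d^3 L$ term; (ii) the $R_1$ and $R_2$ contributions, contributing $d^{1/4}\ex|R_1(W)|$ and $d^{1/2}\ex\|R_2(W)\|_1$; and (iii) the truncation discrepancy between $I[|W-\m|\le n\h]$ and the full expectation, controlled by the tail bound in Lemma~\ref{Exch-moments} together with $m_3$ — this is where the $\e_1(\xi)$-dependent term $d^2\ex\{|\xi|^3\e_1(\xi)\}$ enters, since removing the truncation on a pair requires a coupling of $\law(W\giv\xi)$ with its shifts. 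Collecting these identifies $\e_{20},\e_{21},\e_{22}$ each as $O(d\,(\moda/n)^{1/2} + \ldots)$, and Theorem~\ref{ADB-DN-approx-thm} then yields the stated bound.

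The main obstacle I expect is the bookkeeping around the truncation in Condition~(b) of Theorem~\ref{ADB-DN-approx-thm}: the exchangeable-pair identity $\ex\{h(W')-h(W)\}=0$ holds for the untruncated expectation, so re-introducing the indicator $I[|W-\m|\le n\h]$ inside requires controlling $\ex\{|\ABA_n h(W)|\,I[|W-\m|> n\h]\}$ — a tail event whose probability is exponentially small for the true discrete normal but must here be estimated via the moment bound $m_3$ of Lemma~\ref{Exch-moments}, and whose contribution must be weighed against the sup-norms $\|h\|_{3n\h_0/2,\infty}$ etc. on the correct ball. Making the radii match ($\h \le \h_0$, truncation at $n\h$ versus evaluation on the $3n\h_0/2$-ball) and checking that the $\e_1(\xi)$-coupling genuinely controls the cross terms $h(W+\xi)I[\cdots] - h(W)I[\cdots]$ when the indicator flips is the delicate part; the rest is a fairly mechanical propagation of the Lyapunov-type quantity $L$ and the remainders $R_1,R_2$ through the linear algebra already set up for Theorem~\ref{AX-discrete-normal}. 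The remaining routine steps — dimensional powers of~$d$, continuity of constants in $\|\wA\|$ and~$\s^2$, and the final invocation of Theorem~\ref{ADB-DN-approx-thm} — I would state without detailed computation.
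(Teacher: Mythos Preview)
Your overall strategy --- verify Condition~(b) of the main approximation theorem by expanding $h(W')-h(W)$ to second order in~$\xi$, then use~\Ref{Exch-props} to recover $\ABA_n h(W)$ plus $R_1,R_2$ and a third-order remainder --- matches the paper's. But you have misidentified where the quantity $\e_1(\xi)$ actually enters, and this is a genuine gap, not a bookkeeping detail.

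The third-order Taylor remainder $e_2(W,\xi,h)$ is a combination of third differences of~$h$, and Theorems~\ref{ADB-first-approx-thm}/\ref{ADB-DN-approx-thm} allow only $\|h\|$, $\|\D h\|$ and $\|\D^2 h\|$ on the right-hand side of Condition~(b); there is no slot for~$\|\D^3 h\|$. You cannot convert third differences into second differences ``via the set~$\JJ$ structure'' or ``using~$\chi$'': those quantities carry no information about~$h$. What the paper does is write $e_2 = E_2 - \half\sum_j \xi_j\D^2_{jj}h$, and for the genuine third-difference part~$E_2$ it invokes Lemma~4.4(i) of Part~I: since a third difference $\D^3_{ijk}h(W)$ is $\D^2_{ij}h(W+e^{(k)}) - \D^2_{ij}h(W)$, its \emph{conditional} expectation given~$\xi$ is bounded by $\|\D^2 h\|\cdot\dtv(\law(W+e^{(k)}\giv\xi),\law(W\giv\xi))$, i.e.\ by $\|\D^2 h\|\,\e_1(\xi)$. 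That is precisely how $d^2\ex\{|\xi|^3\e_1(\xi)\}$ arises in~$\e_{22}$, and without this step the third-order remainder does not fit into the framework at all.

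Conversely, the truncation discrepancy you flag as the ``main obstacle'' is handled in the paper purely by the third-moment tail bound of Lemma~\ref{Exch-moments}: the indicator flip between $I[\nS{W'-\m}\le M]$ and $I[\nS{W-\m}\le M]$ costs at most $2(n/\moda)\|h\|_\infty\,\pr[\nS{W-\m}>M]$, which is already $O(d^{3/2}((\moda/n)^{1/2}+L_\S))$ and lands in~$\e_{20}$. No coupling of $\law(W\giv\xi)$ with its shifts is needed there. So your proposal has the $\e_1(\xi)$ mechanism attached to the wrong term; once you move it to the third-order remainder (and invoke the Part~I lemma or reprove it), the rest of your outline goes through essentially as in the paper.
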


\begin{proof} 
Because $\law(W) = \law(W')$, we have
\eqa
  0 &=& (n/\moda)\ex\{h(W')I[\nS{W'-\m} \le M] - h(W)I[\nS{W-\m} \le M]\} \non\\
    &=& (n/\moda)\ex\{(h(W')-h(W))I[\nS{W-\m} \le M]\} \non\\
    &&\mbox{} + (n/\moda)\ex\{h(W')(I[\nS{W'-\m} \le M] - I[\nS{W-\m} \le M])\},\phantom{XX}\label{Aug-Exch-0}
\ena
for any \adbr{function $h\colon \Z^d \to \re$ and} $M > 0$. We shall take $M = n\h/6\sqrt{\moda}$, for~$\h$ to be 
prescribed later, in view of~\Ref{Oct-standard-event}.
For bounded functions $h$, the second term can be simply estimated, 
using Lemma~\ref{Exch-moments}, by 
\eqa
  \eth_0 &:=& 2(n/\moda) \|h\|_\infty \pr[\nS{W-\m} > M] \non\\
%           \Le \frac{432 m_3}{\h^3 \sqrt n} \Blb \frac{\tr(\s^2_\S)}{\moda} \Brb^{3/2}\|h\|_\infty,
     &\le&  864\,  d^{3/2}\h^{-3}((\moda/n)^{1/2} + 10L_\S) \Blb \frac{2\lbar(\s^2_\S)}{\lmin(\s^2_\S)}\Brb^{3/2} 
                  \|h\|_\infty .  \label{ADB-Exch-0}
\ena
For the first term, we write
\eq\label{Aug-Exch-1}
   h(W')-h(W) \Eq \xi^T\D h(W) + \half \xi^T \D^2 h(W) \xi + e_2(W,\xi,h),
\en
\adbr{thus defining $e_2(X,J,h)$.}
From~\Ref{Exch-props}, its first element yields
\eqa
  \leqn{ \frac n{\moda} \bigl|\ex\{\xi^T\D h(W)I[\nS{W-\m} \le M]\}} \non\\
  && \mbox{}\qquad -\ \ex\{n^{-1}(W-\m)^TA^T\D h(W)I[\nS{W-\m} \le M]\}\bigr| \non \\
  &&  \Le (n/\moda)^{1/2} \ex\{|R_1(W)^T \D h(W)|I[\nS{W-\m} \le M]\} \non  \\
  && \Le  (n/\moda)^{1/2} \ex|R_1(W)| \|\D h\|^\S_{\frac{n\h}{6\sqrt{\moda}},\infty}\ =:\ \eth_1. \label{ADB-Exch-1}
\ena
Then
\eqa
  \leqn{ \frac n{2\moda} \bigl|\ex\{\xi^T \D^2 h(W) \xi\, I[\nS{W-\m} \le M]\} }\non\\
   &&\qquad\qquad\mbox{}   - \ex\{\tr(\s^2\D^2h(W))I[\nS{W-\m} \le M]\} \bigr| \non\\ 
   &&\Le \half \ex\{\|R_2(W)\|_1\}\,(n/\moda) \|\D^2 h\|^\S_{\frac{n\h}{6\sqrt{\moda}},\infty} \ =:\ \eth_2.
        \phantom{XXXXXXX} \label{ADB-Exch-2}
\ena

It remains to bound $(n/\moda) \ex\{e_2(W,\xi,h) I[\nS{W-\m} \le M]\}$.  
We first consider $|\xi| > \sqrt {n/\moda}$, and use the bound
\eq\label{Aug-xx}
    \ex\{\nl{\xi}^r\,I[|\xi| > \sqrt{n/\moda} ]\} \Le d^{r/2}\ex\{|\xi|^r\,I[|\xi| > \sqrt{n/\moda}]\}
           \Le d^{r/2}\chi (n/\moda)^{-(3-r)/2}
\en
for $r = 0,1, 2$. Since 
\eqs
    \leqn{ |e_2(W,\xi,h)|\,I[\nS{W-\m} \le M] }\\
    &&\Le 2\|h\|_\infty + \nl{\xi} \|\D h\|^\S_{\frac{n\h}{6\sqrt{\moda}},\infty}
        + \half \nl{\xi}^2  \|\D^2 h\|^\S_{\frac{n\h}{6\sqrt{\moda}},\infty},
\ens
it follows, using~\Ref{Aug-xx}, that
\eqa
    \eth_3 &:=& \frac n{\moda} \ex\{|e_2(W,\xi,h)|\,I[\nS{W-\m} \le M]\,I[|\xi| > \sqrt{n/\moda}] \} 
                   \label{ADB-Exch-3}\\
     &\le& \chi\sqrt{\frac{\moda}n}\Bigl\{ 2\|h\|_\infty  
          + \Bigl(\frac{dn}{\moda}\Bigr)^{1/2} \|\D h\|^\S_{\frac{n\h}{6\sqrt{\moda}},\infty}
          + \frac{dn}{2\moda}\, \|\D^2 h\|^\S_{\frac{n\h}{6\sqrt{\moda}},\infty} \Bigr\} \non\\
     &\le& 2L\{\tr(\s^2)\}^{3/2} \Bigl\{ \|h\|_\infty  
        + \Bigl(\frac{dn}{\moda}\Bigr)^{1/2} \|\D h\|^\S_{\frac{n\h}{6\sqrt{\moda}},\infty}
        +  \frac{dn}{\moda} \|\D^2 h\|^\S_{\frac{n\h}{6\sqrt{\moda}},\infty} \Bigr\}.\non
\ena

For $|\xi| \le \sqrt{n/\moda}$, \adbr{we split $e_2(W,\xi,h)$ into a sum of third differences and a remainder:}
\eq\label{Exch-3rd-bit}
   e_2(W,\xi,h) \Eq E_2(W,\xi,h) - \half \sjd \xi_j\D_{jj} h(W).
\en
For the contribution from the second term in~\Ref{Exch-3rd-bit}, we have at most
\eqa
 \eth_4 &:=& \frac n{2\moda} \Bigl|\ex\Bigl\{ \sjd \xi_j\D_{jj} h(W)
                     I[\nS{W-\m} \le M]I[|\xi| \le \sqrt{n/\moda}]\Bigr\}\Bigr|\non\\
        &\le& \frac n{2\moda} \Bigl|\ex\Bigl\{ \sjd \xi_j\D_{jj} h(W)I[\nS{W-\m} \le M]\Bigr\}\Bigr| \non\\
        &&\qquad\mbox{} + \frac12 \ex\{\nl{\xi} \,I[|\xi| > \sqrt{n/\moda}]\}\,\frac n{\moda}
                     \|\D^2 h\|^\S_{\frac{n\h}{6\sqrt{\moda}},\infty}\non\\
        &=:& \eth_4' + \eth_4'',\label{ADB-Exch-4-}
\ena
say. Now, \adbb{recalling} \adbr{$\vnew := \tr(\s^2_\S)/(d\lla_1)$ 
and~$Z :=  (nd\vnew)^{-1/2}\S^{-1/2}(W-\m)$,}
\Ref{Exch-props} and~\Ref{Exch-R-bnds-11} give
\eqa
    \eth_4' 
     &=& \frac1{2\moda} \sjd \Bigl| \ex\bigl\{([A(W-\m)]_j 
                       + n^{1/2}\moda^{1/2}[R_1(W)]_j)\,\D_{jj}h(W) I[\nS{W-\m} \le M]\bigr\}\Bigr| \non\\
     &\le& \half n^{-1/2} \Blb  (d\vnew)^{1/2}  \ex\nl{A\S^{1/2}Z}  +  \moda^{1/2}\ex\nl{R_1(W)} \Brb 
                 \,\frac n{\moda} \|\D^2 h\|^\S_{\frac{n\h}{6\sqrt{\moda}},\infty} \non\\[1ex]
     &\le& \half (\moda/n)^{1/2} d \sqrt \vnew(\moda^{-1/2}\|\S^{1/2}\|)
                  \{\sqrt2\|A\| + \tfrac32 \lla_1\} \,(n/\moda) \|\D^2 h\|^\S_{\frac{n\h}{6\sqrt{\moda}},\infty}.
               \label{ADB-Exch-4}
\ena
Then, from~\Ref{Aug-xx},
\eq\label{ADB-eta-4''}
  \eth_4'' \Le \half \{d^{1/2} (\moda/n)\chi\} \,(n/\moda)\|\D^2 h\|^\S_{\frac{n\h}{6\sqrt{\moda}},\infty}.
\en

For the first term in~\Ref{Exch-3rd-bit}, we use Lemma~4.4(i) and Remark~4.5 of Part~I to conclude that, 
if $|\xi| \le \sqrt{ n/\moda}$ and $n\h/24\sqrt{\moda} \ge \sqrt{n/\moda\lmin(\S)}$, then
\eqs
   \leqn{\eth_5(\xi) \Def (n/\moda)|\ex\{ E_2(W,\xi,h)I[\nS{W-\m} \le M] \giv \xi\}| } \\ 
    &\le& \bigl\{d^{3/2}|\xi|^3 \e_1(\xi) + 2d|\xi|^2 \pr[\nS{W-\m} \ge M/4 \giv \xi]\bigr\}\, 
                     (n/\moda)\|\D^2 h\|^\S_{\frac{n\h}{4\sqrt{\moda}},\infty}.
\ens
Taking expectations, and then using Lemma~\ref{Exch-moments}, this gives
\eqa
    \eth_5 &:=& \ex\{|\eth_5(\xi)|I[|\xi| \le \sqrt{n/\moda}]\} \label{ADB-Exch-5} \\
           &\le& \Blb d^{3/2}\ex\{|\xi|^3\e_1(\xi)\} 
            + \frac {2dn}{\moda}\pr[\nS{W-\m} \ge M/4]\Brb \, 
                 \frac n{\moda}\|\D^2 h\|^\S_{\frac{n\h}{4\sqrt{\moda}},\infty}
                           \non \\
 %          &\le& \{d^{3/2}\ex\{|\xi|^3\e_1(\xi)\} 
 %                  + 2d(24/\h)^3 n^{-2}\ex\{\nS{W-\m}^3\}\}\,n\|\D^2 h\|^\S_{n\h/4,\infty} \\
           &\le& \Bigl\{d^{3/2}\ex\{|\xi|^3\e_1(\xi)\}
                    + \h^{-3}d^{5/2}C \{\Rh(\s^2_\S)\}^{3/2}\Bigl(\sqrt{\frac {\moda}n} + L_\S\Bigr) \Bigr\}\,
                 \frac n{\moda}\|\D^2 h\|^\S_{\frac{n\h}{4\sqrt{\moda}},\infty}, \non
\ena
for $C$ a universal constant.

Let
\eq\label{Oct-tilde-gen}
    {\widetilde\AA}_{\tn} h(w) \Def \half \tn\tr(\s^2\D^2h(w)) + (w-\m)^T\wA^T\D h(w).
\en
Then,
combining the estimates \Ref{ADB-Exch-0} and \Ref{ADB-Exch-1}--\Ref{ADB-Exch-5} 
with \Ref{Aug-Exch-0} and~\Ref{Aug-Exch-1}, 
% and using Lemmas \ref{Exch-moments} and~\ref{Exch-dtv-lemma}, 
we have shown that 
\eqa
  \leqn{ |\ex\{{\widetilde\AA}_{\tn} h(W) I[|W-\m|_{\wS} \le \tn\h/6]\}| } \label{Aug-line-1}\\
    &&\Eq \bigl|\half \tn \ex\{\tr(\s^2\D^2h(W))I[\nS{W-\m} \le n\h/6\sqrt{\moda}]\}   \non\\
    &&\qquad\quad\mbox{}    + \ex\{(W-\m)^T \wA^T\D h(W)I[\nS{W-\m} \le n\h/6\sqrt{\moda}]\} 
             \bigr|\phantom{XXX}  \non\\
    &&\Le \sum_{l=0}^3 \eth_l + \eth'_4 + \eth''_4 + \eth_5 \non\\
    &&\Le \e_{20}\|h\|_\infty + \e_{21} \tn^{1/2}\|\D h\|^\S_{\frac{n\h}{4\sqrt{\moda}},\infty} 
                    + \e_{22} \tn \|\D^2 h\|^\S_{\frac{n\h}{4\sqrt{\moda}},\infty} \non\\
    &&\Le \Lbar\{\e_{20}'\|h\|_\infty + \e_{21}' \tn^{1/2}\|\D h\|^\wS_{n\h/4,\infty} 
                      + \e_{22}' \tn\|\D^2 h\|^\wS_{n\h/4,\infty}\},
            \label{Aug-line-2}
\ena
with 
\eqs
  \e_{20}' &=& C_0(\h)d^{3/2}(\tn^{-1/2}+L); \qquad 
                       \e_{21}' \Eq \Lbar^{-1}(\ex|R_1(W)| + 2Ld^2\Lbar^{3/2}); \\
    \e_{22}' &=& C_2(\h)(\ex\|R_2(W)\|_1 + Ld^{5/2} + d^{5/2}\tn^{-1/2} + d^{3/2}\ex\{|\xi|^3\e_1(\xi)\}),
\ens
where the constants $C_l(\h)$ depend on $\h$, $\|\wA\|$ and the elements of $\Sp'(\s^2)$ and $\Sp'(\wS)$.
Since, if $\tn\h/12 > 2\{\lmin(\wS)\}^{-1/2}$, the quantity in~\Ref{Aug-line-1} does not change if~$h(X)$ is replaced 
by zero for $|X-\m|_{\wS} > \tn\h/4$,
the norm $\|h\|_\infty$ can be replaced by $\| h\|^{\wS}_{\tn\h/4,\infty}$ for such $\tn$ and~$\h$.
Thus Condition~(iii) of Theorem~\ref{ADB-first-approx-thm} is satisfied, 
for~$\widetilde\AA_n$ as defined in~\Ref{Oct-tilde-gen}, if we take $\h = \td_0$, for~$\td_0$ as defined in
Theorem~\ref{ADB-first-approx-thm}, and for~$\tn$ such that % $\tn\td_0/12 > 2/\sqrt{\lmin(\wS)}$
$\tn \ge \max\bigl\{n_{\ref{ADB-first-approx-thm}},24/(\td_0\{\lmin(\wS)\}^{1/2})\bigr\}$. 
The remaining conditions of Theorem~\ref{ADB-first-approx-thm}, with $\wS$ for~$\S$ and with $\tn$ for~$n$, are
easily checked: Condition~(i) is implied by Lemma~\ref{Exch-moments}, with \adbr{$v = 2\vnew$}, and Condition~(ii) is 
just~\Ref{Exch-dtv-defs}. This proves the theorem.
\end{proof}

\begin{remark}\label{Exch-sigma-fields}
{\rm 
 Direct computation of the quantities $\ex|R_1(W)|$ and $\ex\|R_2(W)\|_1$ can be awkward.
It may be easier to find bounds on
\[
   \tR_1 \Def n^{1/2}\{ \ex(\xi \giv \FF) - n^{-1}A(W-\m)\}\quad\mbox{and}\quad
   \tR_2 \Def \ex(\xi\xi^T \giv \FF) - \s^2,
\]
for a $\s$-field $\FF$ such that~$W$ is $\FF$-measurable.  From
the properties of conditional expectation and Jensen's inequality, it follows that,
for any non-negative random variable~$Y(W)$, we have
\eqs
   \ex\{Y(W)|R_1(W)|\} &\le& \ex\{Y(W)|\tR_1|\} ;\\ %\quad\mbox{and}\quad
     \ex\{Y(W)\|R_2(W)\|_1\} &\le& \ex\{Y(W)\|\tR_2\|_1\}.
\ens
Hence we can use $\tR_1$ and $\tR_2$ in place of $R_1(W)$ and~$R_2(W)$ when computing the bounds
in the theorem and in verifying conditions \Ref{Exch-R-bnds-11}--\Ref{Exch-R-bnds-12}.
}
\end{remark}

\ignore{
In general, it may be difficult to find expressions bounding the total variation distance
$\dtv(\law(W),\law(W+\ej))$, and its conditional versions.  There are two circumstances in which
this can be satisfactorily accomplished; when~$W$ contains an embedded sum of independent random variables,
and when the pair $(W,W')$ is {\it exchangeable\/}.   These are discussed in the following sections. 
}

\section{Examples}\label{examples}
\setcounter{equation}0
\adbr{In Part~I, following the proof of Theorem~5.3, it was remarked that, using Theorem~\ref{AX-discrete-normal},
error bounds of order $O(n^{-1/2}\log n)$ for the (quasi-)equilibrium distributions
of rather general Markov jump processes can be proved.  Here, we concentrate
on examples exhibiting the linear regression structure of the previous section.}

\subsection{Sums of independent integer valued random vectors}\label{indep}
Let $Y_i$, $1\le i\le m$, be independent $\Z^d$-valued random vectors, with 
means~$\m_i$ and covariance matrices~$\VVV_i$, and let $\g_i := \ex|Y_i - \m_i|^3$.
% Without loss of generality, we shall assume that each of the~$Y_i$ has been
% translated in such a way that $\nl{\m_i} \le d/2$ for each~$i$.
Write $\pr[Y_i=X]=:p_{i,X}$, $X\in\Z^d$, and define 
$u_i:=\min_{1\le j\le d}\{1-d_{TV}(\law(Y_i),\law(Y_i+\ej))\}$.
Let 
\eqs
   W &:=& \sum_{i=1}^m Y_i;\quad \m \Def \ex W \Eq \siim \m_i; \quad \vvs_m \Def \siim u_i;\\
   \VVV &:=& \ex\{(W-\m)(W-\m)^T\} \Eq \siim \VVV_i; \quad \Gamma \Def \siim \g_i.
\ens
We apply Theorem~\ref{AX-exch-thm} to approximate the distribution of~$W$.

To start with, we need
to define a~$W'$ on the same probability space, in such a way that $\law(W') = \law(W)$, and
such that $\xi = W'-W$ is not too large.  The canonical way to do this (Stein, 1986, p.16) is
to let $(Y_1',\ldots,Y_m')$ be an independent copy of $(Y_1,\ldots,Y_m)$, and to let~$K$ be uniformly
distributed on $\{1,2,\ldots,m\}$, independently of the $Y_i$ and the~$Y_i'$; then~$W'$ is taken to
be $W - Y_K + Y_K'$.   It is clear that $\law(W') = \law(W)$, and also, writing 
$\xi := W'-W = Y_K' - Y_K$, that
\eqs
    \ex(\xi \giv W) &=& \ex\{\ex(\xi \giv Y_1,\ldots,Y_m) \giv W\}\\
    &=& \ex\Bigl\{m^{-1}\siim (\m_i - Y_i) \Giv W\Bigr\}
                    \Eq - m^{-1}(W - \m),
\ens
so that the regression condition in~\Ref{Exch-props} is satisfied with $A/n = -I/m$,
and with $R_1(W) = 0$.  Then $\s^2 = \ex\{\xi\xi^T\} = 2\VVV/m$, giving, for the
standardized quantities of Remark~\ref{Oct-standard}, $\S_0 = \VVV$ and $\lla_0 = 1/m$,
and hence $\tn = m$, $\wA = -I$ and $\wS = \VVV/m$.
Note also that
\[
    \chi \Eq \ex|\xi|^3 \Eq m^{-1}\siim \ex|Y_i - Y_i'|^3 \Le 4m^{-1}\Gamma.
\]

As a next step in applying Theorem~\ref{AX-exch-thm}, we show that 
the quantity~$\e_1$ of~\Ref{Exch-dtv-defs} can be suitably bounded.

\begin{lemma}\label{AX-indep-1} For~$W$ as defined above,
$$
       \e_1 \Def  \max_{1\le j\le d}d_{TV}(\law(W),\law(W+\ej))\Eq O(\vvs_m^{-1/2}).
$$
\end{lemma}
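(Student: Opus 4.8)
The plan is to bound $\dtv(\law(W),\law(W+\ej))$ by reducing it to a statement about the single summand that carries the most ``smoothing'' and then invoking a one-dimensional smoothing estimate applied coordinate-wise. Recall that, by the triangle inequality and independence, for any $1\le i\le m$,
\[
  \dtv(\law(W),\law(W+\ej)) \Le \dtv(\law(Y_i),\law(Y_i+\ej)),
\]
since $W = Y_i + (W - Y_i)$ with $W-Y_i$ independent of $Y_i$, and convolving both sides of a pair of laws with a fixed distribution cannot increase total variation distance. Hence
\[
  \e_1 \Le \max_{1\le j\le d}\min_{1\le i\le m}\dtv(\law(Y_i),\law(Y_i+\ej)) = \max_{1\le j\le d}\min_{1\le i\le m}(1 - u_i'),
\]
where $u_i' := 1 - \dtv(\law(Y_i),\law(Y_i+\ej))$; but this crude bound only gives $\e_1 \le 1 - \max_i u_i$, which does not decay. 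The correct route is to split the sum $W = \sum_{i=1}^m Y_i$ into blocks and use the fact that convolving many laws, each a little bit smooth, produces something much smoother: one should convolve a growing number of the $Y_i$ and exploit that the total variation distance after convolution decays like the inverse square root of the accumulated ``smoothness budget'' $\sum_i u_i = \vvs_m$.

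Concretely, first I would fix $j$ and reduce to a one-dimensional problem along the $j$-th coordinate: writing $W = W' + Z$ where $Z$ is the $j$-th coordinate of $W$ and using that $\dtv(\law(W),\law(W+\ej))$ is controlled by the total variation distance between the conditional law of the $j$-th coordinate given the others and its unit shift, averaged over the others, one gets $\dtv(\law(W),\law(W+\ej)) \le \ex_{\text{others}}\dtv(\law(Z\mid\text{others}),\law(Z+1\mid\text{others}))$. Then, for the scalar problem, I would use the standard Mineka-type coupling (or equivalently the bound via characteristic functions) which says that if $Z = \sum_{i} Z_i$ is a sum of independent integer-valued random variables with $1 - \dtv(\law(Z_i),\law(Z_i+1)) \ge u_i$, then
\[
  \dtv(\law(Z),\law(Z+1)) \Le C\Bigl(\sum_i u_i\Bigr)^{-1/2}
\]
for a universal constant $C$. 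Applying this with the $Z_i$ being the $j$-th coordinates of the $Y_i$ and noting $1 - \dtv(\law(Y_i),\law(Y_i+\ej)) \ge u_i$ by definition (projecting to a coordinate does not increase the smoothness index in the relevant direction — in fact $1-\dtv$ of the $j$-th coordinate dominates $1-\dtv$ of the full vector), the bound $\sum_i u_i = \vvs_m$ yields $\e_1 = O(\vvs_m^{-1/2})$ uniformly in $j$.

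The main obstacle will be the scalar smoothing estimate $\dtv(\law(Z),\law(Z+1)) \le C\,(\sum_i u_i)^{-1/2}$ and, slightly more subtly, the reduction from the multivariate shift to a scalar shift while keeping the right smoothness index. For the scalar estimate, the cleanest argument is the Mineka coupling: couple two copies of the partial sums, one started at $Z$ and one at $Z+1$, so that at each step $i$ they either stay at the same difference or the difference changes by $\pm 1$, with probability at least $u_i/2$ of a successful step; the difference performs a lazy random walk that hits $0$ (coalescence) with probability at least $1 - C(\sum u_i)^{-1/2}$ by a standard local-CLT / reflection estimate for such walks. I would cite Barbour, Holst \& Janson or Mattner \& Roos for the precise form. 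For the reduction, one uses that for integer random vectors $\dtv(\law(W),\law(W+\ej)) = \dtv(\law(W_j, W_{-j}),\law(W_j+1, W_{-j}))$ and conditions on $W_{-j}$; since each $Y_i$ is independent, the conditional law of the $j$-th coordinate of $W$ given the other coordinates of all the $Y_i$ is still a sum of independent integer random variables, and one checks that the per-summand smoothness along coordinate $j$ is at least $u_i$. Putting these together and taking the maximum over the $d$ coordinates gives the claimed $O(\vvs_m^{-1/2})$, with the implied constant universal.
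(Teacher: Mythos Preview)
Your proposal has the right tool (the Mineka coupling) but a real gap in the reduction step. You claim that, after conditioning on the other coordinates $(Y_{i,-j})_{i=1}^m$, ``the per-summand smoothness along coordinate~$j$ is at least~$u_i$''. This is false in general: the quantity $u_i$ is defined via the \emph{unconditional} overlap $1-\dtv(\law(Y_i),\law(Y_i+\ej))$, and conditioning on $Y_{i,-j}$ can destroy it entirely. For instance, in $d=2$ let $Y_i$ equal $(0,0)$ with probability~$1/2$, $(1,0)$ with probability~$1/4$, and $(1,1)$ with probability~$1/4$. Then $1-\dtv(\law(Y_i),\law(Y_i+e^{(1)})) = 1/4 > 0$, yet given $Y_{i,2}=1$ the first coordinate is a.s.\ equal to~$1$, so its conditional smoothness is zero. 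Your parenthetical remark that ``projecting to a coordinate'' preserves the bound is correct for the \emph{marginal} of $Y_{i,j}$, but the marginal of~$W_j$ only lower-bounds $\dtv(\law(W),\law(W+\ej))$, so that direction is useless here.

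The paper avoids this difficulty by not reducing to a scalar problem at all. It runs the Mineka coupling directly on the vectors: for fixed~$j$, it constructs pairs $(Y_i,\tY_i)$ jointly so that $Y_i-\tY_i \in \{-\ej,0,\ej\}$, with $\pr[Y_i-\tY_i = \pm\ej] = \half\sum_X p_{i,X}\wedge p_{i,X+\ej} \ge u_i/2$. The differences then form a lazy symmetric random walk along the $\ej$-axis in $\Z^d$, and the standard hitting-time bound gives $\pr[\text{not coupled by step }m] = O(\vvs_m^{-1/2})$. The point is that the overlap $1-\dtv(\law(Y_i),\law(Y_i+\ej))$ is exactly the mass available for such a $\pm\ej$ coupling of the full vector~$Y_i$, so no conditioning (and hence no loss of smoothness) is needed.
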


\begin{proof} 
Fix any $1\le j\le d$, and, for $X \in \Z^d$, define 
$$
   p^-_{i,X} \Def \half(p_{i,X}\wedge p_{i,X-\ej});\quad p^+_{i,X} \Def \half(p_{i,X}\wedge p_{i,X+\ej}).
$$
Then define the pair $(Y_i,\tY_i)$ jointly, for $1\le i\le m$, by
$$
  (Y_i,\tY_i) \Eq\left\{
         \begin{array}{ll}(X,X-\ej)&\mbox{with probability }p_{i,X}^-;\\
                          (X,X+\ej)&\mbox{with probability }p_{i,X}^+;\\
                          (X,X)&\mbox{with probability }p_{i,X}-p_{i,X}^- - p_{i,X}^+,
    \end{array}\right.\ X \in \Z^d.
$$
Set $Z_i:=Y_i-\tY_i$. Then $Z_i$ takes the values $\ej$ and $-\ej$ each with probability 
$\sum_{X\in\Z^d} p_{i,X}^+$, and takes the value 0 with probability 
$1-\sum_{X\in\Z^d} p_{i,X}\wedge p_{i,X+\ej}$. Hence, for $T_0 := 0$ and $T_k:=\sum_{i=1}^k Z_i$, 
the process $\{T_k,\,0\le k\le m\}$ is a lazy symmetric random walk. Define 
$$
 Y_i' \Def \left\{\begin{array}{ll}
               \tY_i,&i\le \tau;\\
               Y_i,&i>\tau,\end{array}\right.
$$
where $\tau := \min\{k\colon 1\le k\le m, T_k=\ej\}$ if this is defined, and with $\tau=m$ otherwise.
Set $W'=\sum_{i=1}^m Y_i'$. Then, by the Mineka coupling argument (Lindvall, 2002, Section~II.14),
it follows that
$$
   d_{TV}(\law(W),\law(W+\ej))\Le \pr[W\ne W'+\ej] \Le \pr[\tau>m] \Eq O(\vvs_m^{-1/2}).
$$ 
\end{proof}

As a result of this lemma, it is clear that the quantity~$\e_1$ of~\Ref{Exch-dtv-defs} is
of order $O(\vvs_m^{-1/2})$. 
Defining $W\uii := W - Y_i$ and $\ts_m := \vvs_m - \max_{1\le i\le m}u_i$, we now observe that,
for any $X \in \Z^d$, the conditional quantity $\e_1(X)$ is bounded by
\eq\label{epsilon-tilde}
   \te_1 \Def \max_{1\le i\le m}\max_{1\le j\le d}\dtv(\law(W\uii),\law(W\uii+\ej)) \Eq O((\ts_m)^{-1/2}),
\en
with the final order statement following directly from Lemma~\ref{AX-indep-1}.  This is because,
for any $X \in \Z^d$, 
\eqs
   \leqn{\dtv(\law(W+\ej \giv \xi = X),\law(W \giv \xi = X)) }\non\\
   &&\Le m^{-1}\siim \dtv(\law(W+\ej \giv \xi = X, K=i),\law(W \giv \xi = X, K=i)),
\ens
and because, by independence,
\eqs
    \leqn{\dtv(\law(W+\ej \giv \xi_i = X),\law(W \giv \xi_i = X)) }\\
   &\le&
     \ex\{\dtv(\law(W\uii + \xi_i +\ej \giv \xi_i,\xi_i' = \xi_i + X),
                  \law(W\uii + \xi_i \giv \xi_i,\xi_i' = \xi_i + X))\} \\
     &=& \ex\{\dtv(\law(W\uii + \xi_i +\ej \giv \xi_i),
                  \law(W\uii + \xi_i \giv \xi_i))\} \\
     &=& \ex\{\dtv(\law(W\uii +\ej \giv \xi_i), \law(W\uii \giv \xi_i))\}\\
     &=& \ex\{\dtv(\law(W\uii +\ej), \law(W\uii))\} \Le \te_1.
\ens

Thus a number of the elements appearing in the bound given in Theorem~\ref{AX-exch-thm} can
be successfully handled.  We now show that a multivariate discrete normal approximation can indeed
be established.   We write
$$
   \Lbar \Def d^{-1}\tr(\s^2) \Eq 2\tr(\VVV/m) \quad\mbox{and}\quad
  L \Def m^{-1/2}\frac{\chi}{\{\tr(\s^2)\}^{3/2}} \ \ge\ m^{-1/2};
$$
the latter quantity, introduced in~\Ref{ADB-exch-defs-2}, is of order $O(m^{-1/2})$ 
if the ratio $\ex|\xi|^3/\{\ex|\xi|^2\}^{3/2}$ remains bounded.

\begin{theorem}\label{AX-indep-thm} 
Under the above circumstances,
$$
   d_{TV}\bigl(\law(W),\DN_{d}(\mu,\VVV)\bigr) 
            \Le C d^{7/2} \log m \bigl(L + \adbr{(d/m)^{1/2}}\bigr) \sqrt{\frac{m}{\ts_m}},
$$
for a suitable constant~$C$, depending only on~$\Sp'(\VVV/m)$.
\end{theorem}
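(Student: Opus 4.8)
The plan is to apply Theorem~\ref{AX-exch-thm} to the exchangeable pair $(W,W')$ constructed above, for which $\xi = Y_K'-Y_K$. I would first record the relevant data: since $\ex\{\xi\giv W\} = -m^{-1}(W-\m)$, the regression~\Ref{Exch-props} holds with $A/n = -I/m$ and $R_1(W)\equiv0$; moreover $\s^2 = \ex\{\xi\xi^T\} = 2\VVV/m$, and the standardized quantities of Remark~\ref{Oct-standard} are $\wA=-I$, $\tn=m$ and $\wS=\VVV/m$, so that $\tn\wS=\VVV$ and the target law is $\DN_d(\m,\VVV)$. Because $R_1\equiv0$, conditions~\Ref{Exch-R-bnds-11}--\Ref{Exch-R-bnds-12} hold trivially, Lemma~\ref{Exch-moments} applies, and Theorem~\ref{AX-exch-thm} yields, for $m\ge n_0$,
\[
  \dtv(\law(W),\DN_d(\m,\VVV)) \Le C\log m\bigl\{d^3 m^{-1/2} + d^4\e_1 + d^{1/2}\ex\|R_2(W)\|_1 + d^3 L + d^2\ex\{|\xi|^3\e_1(\xi)\}\bigr\},
\]
the $\ex|R_1(W)|$-term being absent as $R_1\equiv0$. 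Everything then reduces to estimating the five terms on the right.

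The $\e_1$-terms are already handled by what precedes the theorem. Lemma~\ref{AX-indep-1} and $\vvs_m\ge\ts_m$ give $\e_1 = O(\ts_m^{-1/2})$, and the discussion around~\Ref{epsilon-tilde} gives $\e_1(\xi)\le\te_1 = O(\ts_m^{-1/2})$, whence $\ex\{|\xi|^3\e_1(\xi)\}\le\te_1\chi$. Substituting $\chi = L\,m^{1/2}\{\tr(\s^2)\}^{3/2}$, and noting that $\tr(\s^2) = d\Lbar$ has size controlled by $\Sp'(\VVV/m)$, the fifth term becomes $O(d^{7/2}L\sqrt{m/\ts_m})$; likewise $d^4\e_1 = O(d^4 m^{-1/2}\sqrt{m/\ts_m}) = O(d^{7/2}(d/m)^{1/2}\sqrt{m/\ts_m})$, while $d^3 m^{-1/2}$ and $d^3L$ are dominated by these. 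So all terms except $\ex\|R_2(W)\|_1$ already fit inside $C\,d^{7/2}\log m\,(L+(d/m)^{1/2})\sqrt{m/\ts_m}$.

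The main obstacle is the $R_2$-term: one needs $d^{1/2}\ex\|R_2(W)\|_1 = O(d^{7/2}(L+(d/m)^{1/2})\sqrt{m/\ts_m})$. Conditioning through $\sigma(Y_1,\dots,Y_m)$ and averaging over $K$ gives
\[
  \s^2(W) = \frac{\VVV}m + \frac1m\,\ex\Bigl\{\textstyle\sum_{k=1}^m(Y_k-\m_k)(Y_k-\m_k)^T \Bigm| W\Bigr\},
\]
so $R_2(W) = \frac1m\ex\bigl\{\sum_{k=1}^m[(Y_k-\m_k)(Y_k-\m_k)^T-\VVV_k]\bigm| W\bigr\}$: the conditional expectation, given the sum $W=\sum_k Y_k$, of a centred sum of $m$ independent matrices. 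The crude bound $\ex\|R_2(W)\|_1\le m^{-1}\ex\|\sum_k[(Y_k-\m_k)(Y_k-\m_k)^T-\VVV_k]\|_1$ gives only $O(d\Lbar)$ and is useless; the gain comes from the fact that conditioning on $W$ constrains the sample covariance only weakly, so that $R_2(W)$ has scale $m^{-1/2}$ rather than $O(1)$. I would make this quantitative by a local-limit estimate: for $f_k(y):=(y-\m_k)(y-\m_k)^T-\VVV_k$ and $W^{(k)}:=W-Y_k$, one has $\ex f_k(Y_k)=0$ and
\[
  \ex\{f_k(Y_k)\giv W=w\} = \ex\Bigl\{f_k(Y_k)\Bigl(\frac{\pr[W^{(k)}=w-Y_k]}{\pr[W=w]}-1\Bigr)\Bigr\},
\]
and the ratio $\pr[W^{(k)}=w-x]/\pr[W^{(k)}=w]$ differs from $1$ by $O(|x|\,\ts_m^{-1/2})$, for $w$ inside the truncation ball that accompanies the $R_2$-term, by telescoping along unit steps and invoking the Mineka-coupling bounds of Lemma~\ref{AX-indep-1} for $W^{(k)}$ (which is where the factor $\sqrt{m/\ts_m}$ comes from). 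Combining this with the bound $\ex\|f_k(Y_k)(Y_k-\m_k)\|\lesssim\g_k$, with $\ex\nS{W-\m}\lesssim\sqrt{md}$ and $\Gamma=\sum_k\g_k\le m\chi$ (by convexity of $t\mapsto|t|^3$), with Lemma~\ref{Exch-moments}, and with $\chi = L\,m^{1/2}\{\tr(\s^2)\}^{3/2}$, one arrives at $\ex\|R_2(W)\|_1 = O(L\sqrt{m/\ts_m}\,\mathrm{poly}(d))$, the power of $d$ being small enough to be absorbed into $C$. Collecting the five estimates and folding all constants into a single $C$ depending only on $\Sp'(\VVV/m)$ then gives the stated bound; the only laborious ingredient is this local-limit/telescoping estimate for $R_2(W)$, everything else being either already proved (Lemmas~\ref{AX-indep-1} and~\ref{Exch-moments}, and~\Ref{epsilon-tilde}) or direct substitution into Theorem~\ref{AX-exch-thm}.
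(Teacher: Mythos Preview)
Your overall structure---apply Theorem~\ref{AX-exch-thm} with $R_1\equiv0$, then bound $\e_1$, $\e_1(\xi)$ and $\ex\|R_2(W)\|_1$---is exactly right, and your handling of everything except~$R_2$ matches the paper. The gap is in your treatment of~$R_2$.

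You claim that the Mineka coupling gives, for $w$ in some truncation set, a \emph{pointwise} bound
\[
   \Bigl|\frac{\pr[W^{(k)}=w-x]}{\pr[W^{(k)}=w]}-1\Bigr| \ =\ O\bigl(|x|\,\ts_m^{-1/2}\bigr).
\]
This does not follow. Lemma~\ref{AX-indep-1} and the argument around~\Ref{epsilon-tilde} yield only the total variation bound
$\sum_w\bigl|\pr[W^{(k)}=w-x]-\pr[W^{(k)}=w]\bigr|\le 2\te_1\nl{x}$; converting this to a pointwise ratio estimate would require a local limit theorem with a \emph{lower} bound on $\pr[W^{(k)}=w]$, which is not available here (and indeed can fail for individual $w$). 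There is also no truncation attached to the $R_2$-term in Theorem~\ref{AX-exch-thm}: the bound involves the unconditional $\ex\|R_2(W)\|_1$.

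The paper avoids this by never passing through pointwise densities. It writes
\[
   \ex|\s^2_{il}(W)-\s^2_{il}| \ \le\ \sum_{X,y}p(X)\,|X_i|\,|X_l|\,p(y)\cdot 2\dtv\bigl(\law(W\giv\xi=X),\law(W\giv\xi=y)\bigr),
\]
and then shows, by conditioning on $K=i$ and on $(Y_i,Y_i')$, that
$\dtv(\law(W\giv\xi=X),\law(W\giv\xi=y))\le\te_1\nl{X-y}$. Summing over $i,l$ gives
$\ex\|R_2(W)\|_1\le 4\te_1 d^{3/2}\chi$, which combines with~\Ref{epsilon-tilde} and $\chi=L\sqrt m\,d^{3/2}\Lbar^{3/2}$ to yield the required order. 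Your decomposition through $\sigma(Y_1,\dots,Y_m)$ can in fact be pushed through in the same spirit---insert $0=\sum_{y'}[f_k(y')]_{il}\pr[Y_k=y']$, then sum over~$w$ \emph{before} taking absolute values so that the Mineka TV bound applies---but as written the argument rests on a claim that is false.
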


\begin{proof}
With the definitions of $W'$ and~$W$ given above, the regression condition in~\Ref{Exch-props}
is satisfied with~$R_1(w) = 0$ for all $w \in \Z^d$, so that 
Conditions \Ref{Exch-R-bnds-11} and~\Ref{Exch-R-bnds-12} are trivially satisfied.  
Then $\e_1 = O(\vvs_m^{-1/2})$, by Lemma~\ref{AX-indep-1},
and 
\eq\label{Sept-xi^3-bnd}
   \ex\{|\xi|^3 \e_1(\xi)\} \Eq O((\ts_m)^{-1/2}\chi),
\en
from the observations above.  Note that
\eq\label{Oct-chi-bnd}
   \chi \Eq L\sqrt m\,d^{3/2}\Lbar^{3/2}.
\en

For $\ex\|R_2(W)\|_1$, for any $X,w \in \Z^d$, we write $p(X) := \pr[\xi=X]$, obtaining
\[
    \s^2_{il}(w) \Eq \sum_{X\in\Z^d} X_i X_l \pr[\xi=X\giv W=w] 
              \Eq \sum_{X\in\Z^d}p(X) X_i X_l \frac{\pr[W=w \giv \xi=X]}{\pr[W=w]}.
\]
Hence
\eqa
  \leqn{\ex|\s^2_{il}(W) - \s^2_{il}| 
     \Eq \sum_{w\in\Z^d}\Bigl|\sum_{X\in\Z^d}p(X) X_i X_l (\pr[W=w \giv \xi=X] - \pr[W=w])\Bigr| }\non\\
     &=& \sum_{w\in\Z^d}\Bigl|\sum_{X\in\Z^d}p(X) X_i X_l \sum_{y\in\Z^d}p(y)
                  (\pr[W=w \giv \xi=X] - \pr[W=w\giv\xi=y])\Bigr|\non \\
     &\le& \sum_{X\in\Z^d}p(X) |X_i| |X_l| \sum_{y\in\Z^d}p(y)\,2\dtv(\law(W \giv \xi=X),\law(W \giv \xi=y)).
            \label{Sept-Ind-1}
\ena
Now, by independence,
\eqs
   \leqn{\dtv(\law(W \giv \xi=X),\law(W \giv \xi=y))}\\
     &&\Le \frac1m \siim \dtv(\law(W\uii + Y_i \giv  Y_i' - Y_i = x),\law(W\uii + Y_i \giv  Y_i' - Y_i = y)) \\
     &&\Le \frac1m \siim \ex\{\dtv(\law(W\uii + Y_i \giv  Y_i',  Y_i = Y_i'-x),\law(W\uii + Y_i \giv  Y_i', Y_i= Y_i'-y))\} \\
     &&\Eq \frac1m \siim \ex\{\dtv(\law(W\uii),\law(W\uii-y+x))\} \\
     && \le \te_1\nl{y-x}.
\ens
Substituting this bound into~\Ref{Sept-Ind-1} and adding over $1\le i,l \le d$ thus gives
\eqa
   \ex\|R_2(W)\|_1 &\le& 2\sid \sld \sum_{X\in\Z^d}p(X) |X_i| |X_l| \sum_{y\in\Z^d}p(y) \te_1\nl{x-y} \non\\
     &\le& 2\te_1\sum_{X\in\Z^d}p(X) \nl{X}^2 \{\nl{X} + \ex \nl{\xi}\} \non\\
     &\le& 4\te_1 \ex\nl{\xi}^3 \Le 4\te_1 d^{3/2}\chi . \label{Sept-R2-bnd}
\ena

It only remains to collect the elements needed for Theorem~\ref{AX-exch-thm}.  
% It is immediate that $\gbars/\Lbar = \gbar(\VVV/m)$. 
%, and that~$\h_0$ is a function of the elements of~$\Sp'(\VVV/m)$ alone.  
\adbr{From \Ref{Sept-xi^3-bnd} and~\Ref{Sept-R2-bnd},} 
and from the definition of~$L$, we have
\[
    d^{1/2}\ex\|R_2(W)\|_1 + d^2 \ex\{|\xi|^3 \e_1(\xi)\}
      \Eq O(d^2\chi\ts_m^{-1/2}) \Eq O\bigl(Ld^{7/2} (m/\ts_m)^{1/2}\Lbar^{3/2}\bigr).
\]
Combining this with the remaining elements of the bound given in Theorem~\ref{AX-exch-thm},
and noting that $\ts_m \le m$, the theorem follows.
\end{proof}

Except for the logarithmic factors, the bound obtained in the theorem is of the same order in~$m$ as would 
be expected for weaker metrics, such as the convex sets metric (Bentkus 2003, Fang \& R\"ollin 2015), 
if $\ts_m \asymp m$.  The latter asymptotic
equivalence holds, for example, for identically distributed summands 
whose common distribution has non-trivial overlap with its unit translates in each direction.
It is possible, however, for $\ts_m$ to be significantly smaller than~$m$.  For instance, if
all the summands making up~$W$ are on~$2\Z \times \Z^{d-1}$, then $s_m = 0$, and the discrete normal is not a good
approximation to~$W$ in total variation, since it puts about half its probability mass on points
whose first coordinate is an odd integer, whereas~$\law(W)$ puts zero mass on this set.
  
The best approximation order
with respect to the convex sets metric, for sums of independent and identically distributed random
variables with finite third moment, is $O(d^{7/4}L)$.  Thus our rate is weaker in~$m$ by 
a factor of~$\log m$, and in dimension by a factor \adbr{of~$d^{9/4}$.} % if \adbb{$\gbar(\VVV/m)/\Lbar$} is bounded. 
If the distributions are not identical,  the best known $d$-dependence
for approximation in the convex sets metric
is rather worse, unless the random variables are also assumed to be bounded.
% --- for independent and identically distributed random
% variables, $n$ and~$\ts_m$ are typically asymptotically equivalent.  
Since the total variation metric is substantially stronger than the convex sets metric, 
our bounds are of encouragingly small order \adbr{in~$d$, too}.

\subsection{Exchangeable pairs}\label{exchangeable}
If the pair $(W,W')$ is also {\it exchangeable\/}, so that
$\law((W,W')) = \law((W',W))$, a neat argument of R\"ollin \& Ross~(2015) delivers 
bounds on the quantities $\e_1$ and~$\e_1(\xi)$ of~\Ref{Exch-dtv-defs}, which appear in
the bound given in Theorem~\ref{AX-exch-thm}.
These can be of considerable practical use in deriving 
explicit bounds from the general expressions given in Theorem~\ref{AX-exch-thm}.

\adbb{For $\xi := W'-W$},
let~$\JJ$ be the set of $d$-vectors such that $q^J := \pr[\xi=J] > 0$,
and suppose \adbr{that each of the coordinate \adbb{vectors~$\ej\in\re^d$} %, $1\le j\le d$,
can be obtained as a (finite) sum of elements of~$\JJ$.}
For $Q^J(W) := \pr[\xi = J \giv W]$,  set
\eq\label{Exch-cdl-prob-defs}
  \vva^J \Def (q^J)^{-1} \ex|Q^J(W) - q^J|,
\en  
to be thought of as small. Note that, by exchangeability,
\eq\label{Exch-prob-symmetry}
    q^J \Eq \ex\{I[W'-W = J]\} \Eq \ex\{I[W-W' = J]\} \Eq q^{-J}.
\en
We then write
\[
   \vvb_j \Def  \sum_{l=1}^{r(j)} (\vva^{J\uj_l} + \vva^{-J\uj_l}),
       \adbr{\quad\mbox{where}\quad \sum_{l=1}^{r(j)}J\uj_l \Eq \ej,}
\]
% where the $(J_l\uj,\,1\le l\le r(j),\,1\le j\le d)$ are as in Assumption~\adbg{G4},
and then set $\vvb^* := \max_{1\le j\le d}\vvb_j$ and $\vva^* := \sup_{J\in\JJ}\vva^J$.
With the help of these quantities, we can bound the differences $\dtv(\law(W),\law(W+\ej))$
between the distribution of~$W$ and its translates.  

\begin{lemma}\label{Exch-dtv-lemma}
For each $1\le j\le d$, we have
\[
    \dtv(\law(W+\ej),\law(W)) \Le  \vvb_j,
\]
and
\[
   \dtv(\law(W + \ej \giv \xi = J),\law(W \giv \xi = J)) 
          \Le \vvb_j + 2\vva^J.
\]
Hence, in particular, for each $J\in\JJ$,
\[
  \dtv(\law(W + \ej \giv \xi = J),\law(W \giv \xi = J)) 
          \Le \vvb^* + 2\vva^*,
\]
and $\dtv(\law(W+\ej),\law(W)) \le  \vvb^*$.
Furthermore, for~$R_2(W)$ as defined in~\Ref{Exch-props}, we have
\[
    \ex\|R_2(W)\|_1   \Le d \tr(\s^2) \vva^*.
\]\end{lemma}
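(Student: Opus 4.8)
The plan is to reduce everything to a single ``swap'' identity coming from exchangeability, combined with the defining estimate for $\vva^J$. First I would record that, since $\law((W,W')) = \law((W',W))$ and interchanging $W$ and $W'$ replaces $\xi$ by $-\xi$ (while $W' = W+\xi$ equals $W+J$ on $\{\xi=J\}$), for every bounded $g\colon\Z^d\to\re$ and every $J$
\[
   \ex\{g(W+J)\,I[\xi=J]\} \Eq \ex\{g(W)\,I[\xi=-J]\}.
\]
Secondly, directly from $\ex Q^J(W) = q^J$ and the definition $\vva^J = (q^J)^{-1}\ex|Q^J(W)-q^J|$, any $g$ with $\|g\|_\infty\le1$ satisfies $|\ex\{g(W)I[\xi=J]\} - q^J\ex g(W)| = |\ex\{g(W)(Q^J(W)-q^J)\}| \le q^J\vva^J$. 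Applying this to both sides of the swap identity --- on the left with $g(\cdot+J)$ in place of $g$, on the right directly --- and using the symmetry $q^J = q^{-J}$ of \Ref{Exch-prob-symmetry} (which in particular shows $-J\in\JJ$ when $J\in\JJ$), the common factor $q^J$ cancels, leaving $|\ex g(W+J) - \ex g(W)| \le \vva^J + \vva^{-J}$ for all $\|g\|_\infty\le1$; hence $\dtv(\law(W+J),\law(W)) \le \vva^J + \vva^{-J}$ for each $J\in\JJ$.

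The first displayed inequality then follows by telescoping along a representation $\ej = \sum_{l=1}^{r(j)}J\uj_l$ with all $J\uj_l\in\JJ$: by translation invariance of $\dtv$, $\dtv(\law(W+\ej),\law(W)) \le \sum_{l=1}^{r(j)}\dtv(\law(W+J\uj_l),\law(W)) \le \sum_{l=1}^{r(j)}(\vva^{J\uj_l}+\vva^{-J\uj_l}) = \vvb_j$. For the conditional statement I would use the elementary identity $\pr[W=w\giv\xi=J] = \pr[W=w]\,Q^J(w)/q^J$, which yields both $\dtv(\law(W\giv\xi=J),\law(W)) \le \vva^J$ and, after shifting by $\ej$, $\dtv(\law(W+\ej\giv\xi=J),\law(W+\ej)) \le \vva^J$; combined with the previous bound via the triangle inequality these give $\dtv(\law(W+\ej\giv\xi=J),\law(W\giv\xi=J)) \le \vvb_j + 2\vva^J$. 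The two ``in particular'' statements are then immediate, replacing $\vvb_j$ by $\vvb^*$ and, for $J\in\JJ$, $\vva^J$ by $\vva^*$.

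For the final inequality, since $\xi$ is supported on $\JJ$ and $\ex|\xi|^2<\infty$, one has $\s^2(W)_{il} = \ex\{\xi_i\xi_l\giv W\} = \sum_{J\in\JJ}J_iJ_lQ^J(W)$, so $R_2(W)_{il} = \sum_{J\in\JJ}J_iJ_l(Q^J(W)-q^J)$, and then, interchanging the absolutely convergent sums,
\[
   \ex\|R_2(W)\|_1 \Le \sum_{i,l=1}^d\sum_{J\in\JJ}|J_i||J_l|\,\ex|Q^J(W)-q^J| \Eq \sum_{J\in\JJ}\nl{J}^2\,q^J\vva^J .
\]
Bounding $\vva^J\le\vva^*$, using Cauchy--Schwarz in the form $\nl{J}^2\le d|J|^2$, and noting $\sum_{J\in\JJ}|J|^2q^J = \ex|\xi|^2 = \tr(\s^2)$, one obtains $\ex\|R_2(W)\|_1 \le d\,\tr(\s^2)\,\vva^*$.

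The one genuinely delicate point is the first paragraph: getting the orientation of the swap identity correct (it is $\xi$, not $W$, that changes sign, and $W+\xi = W+J$ on $\{\xi=J\}$), and then seeing that the exchangeability symmetry $q^J=q^{-J}$ is exactly what makes the prefactor $q^J$ cancel, converting a bound on $\ex\{\,\cdot\,I[\xi=\pm J]\}$ into a bound on $\ex g(W+J)-\ex g(W)$. Everything afterwards --- the telescoping, the conditioning formula, and the Cauchy--Schwarz step --- is routine.
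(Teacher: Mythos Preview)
Your argument is correct and follows essentially the same route as the paper: the exchangeability ``swap'' identity combined with the bound $|\ex\{g(W)(Q^J(W)-q^J)\}|\le q^J\vva^J$ yields $\dtv(\law(W+J),\law(W))\le \vva^J+\vva^{-J}$, and telescoping gives the first claim; your treatment of the conditional bound via the triangle inequality through the unconditional law (using $\dtv(\law(W\giv\xi=J),\law(W))\le \vva^J$) is a minor but equally valid variant of the paper's direct expansion of $\ex\{(f(W+\ej)-f(W))\giv\xi=J\}$, and the $R_2$ estimate is handled identically.
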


\begin{proof}
 For any $J \in \JJ$ and any~$f$ with $\|f\|_\infty = 1$, we use exchangeability to give
\[
    \ex\{f(W')I[W'-W = J] - f(W)I[W-W' = J]\} \Eq 0.
\]
\adbr{As in the proof of Theorem~3.6 of R\"ollin \& Ross~(2015),
we divide by~$q^J$, using~\Ref{Exch-prob-symmetry}, and
evaluate the expectation by conditioning on~$W$, giving}
\eqs
   0 &=& (q^J)^{-1}\ex\{f(W+J)Q^J(W) - f(W) Q^{-J}(W)\} \\
     &=& \ex\{f(W+J) - f(W)\} + (q^J)^{-1}\ex\{f(W+J)(Q^J(W) - q^J)\} \\
     &&\qquad\mbox{}      - (q^{-J})^{-1}\ex\{f(W)(Q^{-J}(W) - q^{-J})\},
\ens
from which it follows that
$$
    \dtv(\law(W+J),\law(W)) \Le \vva^J + \vva^{-J}.  
$$
The first statement now follows by the triangle inequality.

For the second, % \adbr{writing $\xi := W'-W$,} 
we have
\eqs
   \leqn{ \ex\{f(W+\ej) - f(W) \giv \xi = J\} }\\
         &=& (q^J)^{-1}\ex\{(f(W+\ej) - f(W))I[\xi = J]\} \\
    &=& (q^J)^{-1}\ex\{(f(W+\ej) - f(W))Q^J(W)\} \\
    &=& \ex\{f(W+\ej) - f(W)\} \\
    &&\qquad\mbox{} +  (q^J)^{-1}\ex\{(f(W+\ej) - f(W))(Q^J(W) - q^J)\}.\phantom{XXX} 
\ens
Hence we have
\eqa
  \leqn{\dtv(\law(W \giv \xi = J),\law(W+\ej \giv \xi = J)) } \non\\ 
   &&\Le \dtv(\law(W),\law(W+\ej)) + 2\vva^J, \label{Exch-condl-dtv}
\ena
and the second part follows; note that exchangeability was not used in proving~\Ref{Exch-condl-dtv}.

Finally, from the definition of~$R_2(W)$ in~\Ref{Exch-props}, we have
\[
    \{R_2(w)\}_{il} \Eq  \s^2_{il}(w) - \s^2_{il} \Eq \sum_{J,J' \in \JJ} J_i J'_l (Q^J(w) - q^J),
\]
for any $1\le i,l \le d$, so that
\[
    \ex|\s^2_{il}(W) - \s^2_{il}| \Le \sum_{J,J' \in \JJ} q^J |J_i|\, |J'_l| \vva^J
                                  \Le \ex\{|\xi_i|\,|\xi_l|\}\vva^*.
\]
This in turn implies that
\[
  \ex\|R_2(W)\|_1 \Eq \sid \sld \ex|\s^2_{il}(W) - \s^2_{il}| \Le \ex\{\nl{\xi}^2\}\vva^*
              \Le d \tr(\s^2) \vva^*,
\]
as claimed.
\end{proof}

The following corollary is immediate.
\begin{corollary}\label{Dec-useful-bounds}
Under the above assumptions,
\[
      d^{1/2}\ex\|R_2(W)\|_1 \Le C\Lbar \tn^{-1/2} d^{5/2}\{\tn^{1/2} \vva^*\}
\]
and 
\[
      d^2\ex\{|\xi|^3\e_1(\xi)\} \Le C'\Lbar^{3/2} d^{7/2} L \{\tn^{1/2}(\vvb^* + 2\vva^*)\},
\]
for constants $C$ and~$C'$ that depend only on~$\Sp'(\s^2/\Lbar)$. $\hfill\qed$
\end{corollary}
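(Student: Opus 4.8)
The plan is to read both inequalities straight off Lemma~\ref{Exch-dtv-lemma}, after substituting the elementary identities that relate $\chi$, $L$, $\tr(\s^2)$, $\Lbar$ and~$\tn$; there is essentially no new work, so the proof really is ``immediate''.

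First I would dispose of the $R_2$-bound. Lemma~\ref{Exch-dtv-lemma} already supplies $\ex\|R_2(W)\|_1 \Le d\tr(\s^2)\vva^*$. Since $\tr(\s^2) = d\Lbar$ by the definition $\Lbar := d^{-1}\tr(\s^2)$, the right-hand side equals $d^2\Lbar\vva^*$; multiplying through by $d^{1/2}$ gives
\[
   d^{1/2}\ex\|R_2(W)\|_1 \Le d^{5/2}\Lbar\vva^* \Eq \Lbar\,\tn^{-1/2}\,d^{5/2}\{\tn^{1/2}\vva^*\},
\]
which is the first assertion (with $C=1$; any larger constant depending on $\Sp'(\s^2/\Lbar)$ is of course still admissible). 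The factorization $\tn^{-1/2}\{\tn^{1/2}\vva^*\}$ is retained purely to exhibit the quantity $\tn^{1/2}\vva^*$ that one wants small when matching $\e_{22}'$ in the proof of Theorem~\ref{AX-exch-thm}.

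For the second bound the one point to note is that $\xi\in\JJ$ almost surely, so the fixed-$J$ estimate of Lemma~\ref{Exch-dtv-lemma}, namely $\dtv(\law(W+\ej\giv\xi=J),\law(W\giv\xi=J)) \Le \vvb^*+2\vva^*$ for every $J\in\JJ$, upgrades to the pointwise bound $\e_1(\xi) \Le \vvb^*+2\vva^*$ for $\e_1(\xi)$ as defined in~\Ref{Exch-dtv-defs}. Hence $\ex\{|\xi|^3\e_1(\xi)\} \Le (\vvb^*+2\vva^*)\,\ex|\xi|^3 = (\vvb^*+2\vva^*)\chi$. It then remains only to re-express $\chi$ in the normalized variables: from $L := (\moda/n)^{1/2}\chi\{\tr(\s^2)\}^{-3/2}$, together with $\tn = n/\moda$ (Remark~\ref{Oct-standard}) and $\tr(\s^2)=d\Lbar$, one gets $\chi = L\,\tn^{1/2}(d\Lbar)^{3/2}$. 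Substituting and multiplying by $d^2$ yields
\[
   d^2\ex\{|\xi|^3\e_1(\xi)\} \Le \Lbar^{3/2} d^{7/2} L\,\{\tn^{1/2}(\vvb^*+2\vva^*)\},
\]
which is the second assertion (again with $C'=1$).

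There is no real obstacle here; the content is entirely in Lemma~\ref{Exch-dtv-lemma}. The only things requiring a moment's care are (i) the transfer of the per-$J$ total-variation bound to a pointwise bound on the random variable $\e_1(\xi)$, which uses $\pr[\xi\in\JJ]=1$, and (ii) keeping the several equivalent ways of writing the normalizing constants straight — $\chi$ versus $L$, $\tr(\s^2)$ versus $\Lbar$, and $n$ versus $\tn$ through Remark~\ref{Oct-standard} — so that the powers of $d$ and of $\tn$ emerge exactly as stated.
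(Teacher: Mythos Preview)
Your proof is correct and follows exactly the approach the paper intends: the corollary is marked ``immediate'' with a bare $\qed$, and what you have written is precisely the straightforward substitution of $\tr(\s^2)=d\Lbar$, $\tn=n/\moda$, and the identity $\chi=L\,\tn^{1/2}(d\Lbar)^{3/2}$ into the last two conclusions of Lemma~\ref{Exch-dtv-lemma}. Your observation that one in fact obtains $C=C'=1$ is accurate; the paper's phrasing ``depend only on $\Sp'(\s^2/\Lbar)$'' is simply a conservative blanket statement.
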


\begin{remark}\label{Exch-conditioning}
{\rm
Note that, by the argument in Remark~\ref{Exch-sigma-fields}, we can bound the quantities~$\vva^J$
above by $(q^J)^{-1} \ex|\pr[\xi = J \giv \FF] - q^J|$, for any $\s$-field~\adbr{$\FF$} such that~$W$ is $\FF$-measurable.
Such quantities may be easier to bound in practice.
}
\end{remark}

\begin{remark}
{\rm
For an exchangeable pair $(W,W')$, we see that
\eqa
  \ex\{\xi\xi^T\} &=& \ex\{(W'-\m)(W'-W)^T - (W-\m)(W'-W)^T\} \non\\
       &=& -\ex\{-(W-\m)(W-W')^T + (W-\m)(W'-W)^T\} \non\\
   &=& -2\ex\{(W-\m)(W'-W)^T\} \Eq -2\ex\{(W-\m)\ex(\xi^T \giv W)\} \non\\
        &=& -2\ex\{\ex(\xi \giv W)(W-\m)^T\}, \label{Exch-A-symmetry}
\ena
the last equality following because $\ex\{\xi\xi^T\}$ is symmetric.  If the
remainders~$R_1(W)$ and~$R_2(W)$ in~\Ref{Exch-props} were exactly zero, this would
give
\[
   \half\s^2 \Eq -n^{-1}A\, {\rm Cov}(W) \Eq -n^{-1}{\rm Cov}(W) A^T,
\]
and hence also
\adbr{$A^{-1}\s^2 = \s^2 (A^T)^{-1}$.}  If this is the case, we can easily solve for~$\S$, since then
$\S := -\half A^{-1}\s^2 = -\half \s^2(A^T)^{-1}$ satisfies
$A\S + \S A^T + \s^2 = 0$ and is symmetric. 
}
\end{remark}

\subsubsection{Monochrome edges in regular graphs}\label{Dec-monochrome}
As an example of the application of Theorem~\ref{AX-exch-thm} in the exchangeable setting,
suppose that~$G_n$ is an $r$-regular graph on~$n$ vertices (so that one of $n$ and~$r$
is even); thus there are $nr/2$ edges in the graph.
Let the vertices be coloured independently, each with one of~$m$ colours, the
probability of choosing colour~$i$ being~$p_i > 0$, $1\le i\le m$.  Let~$N_i$ denote the
number of vertices having colour~$i$, and let~$M_i$ denote the number of edges joining
pairs of vertices that both have colour~$i$.  We  approximate the
joint distribution of
\[
    W \Def (M_1,\ldots,M_m,N_1,\ldots,N_{m-1}) \ =:\ (W_1,\ldots,W_m,W_{m+1},\ldots,W_{2m-1}),
\]
when~$n$ becomes large, while $r$, $m$ and $p_1,\ldots,p_m$ remain fixed; the detailed
structure of~$G_n$ does not appear in the approximation.
Of course, the value of $N_m = n - \sum_{i=1}^{m-1}N_i$ is implied by knowledge of~$W$.
This problem, in the context of multivariate normal approximation, was considered by
Rinott \& Rotar~(1996) and in Chen, Goldstein \& Shao~(2011, pp.333--334).

\begin{theorem}\label{monochrome-example}
 For $m \ge 3$, $r$ and $p_1,\ldots,p_m$ fixed, we can find $\n \in \re^{2m-1}$ and a 
$(2m-1) \times (2m-1)$ covariance matrix~$\S$ such that, as $\nti$,
\[
    \dtv(\law(W),\DN_{2m-1}(n\n,n\S)) \Eq O(n^{-1/2}\log n).
\]
\end{theorem}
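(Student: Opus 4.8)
The plan is to apply Theorem~\ref{AX-exch-thm}, in combination with Lemma~\ref{Exch-dtv-lemma} and Corollary~\ref{Dec-useful-bounds}, to the exchangeable pair obtained by re-colouring a single vertex (as in Stein's original construction, and also derivable from Theorem~\ref{Dec-exch-thm}). Concretely, I would let~$\zeta=(\zeta_v)_v$ denote the random colouring that produces~$W$, pick a vertex~$V$ uniformly at random and independently of~$\zeta$, draw a fresh colour~$c$ from $(p_1,\dots,p_m)$ independently of everything else, and set $W':=W(\zeta')$, where $\zeta'_w=\zeta_w$ for $w\ne V$ and $\zeta'_V=c$. A short argument shows that the triple $(\zeta',V,\zeta_V)$ has the same distribution as $(\zeta,V,c)$, so that $(W,W')$ is exchangeable (in particular $\law(W')=\law(W)$), and $\ex|W|^3<\infty$ since~$W$ is bounded. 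Writing $\xi:=W'-W$, the vector~$\xi$ is a deterministic function of~$\zeta_V$, of the colours of the~$r$ neighbours of~$V$, and of~$c$; as these $r+2$ colours are i.i.d.\ and~$G_n$ is $r$-regular, the law of~$\xi$---and hence $\s^2:=\ex\{\xi\xi^T\}$, the finite set~$\JJ$, and the probabilities~$q^J$---depends only on $r,m,p_1,\dots,p_m$, not on the structure of~$G_n$. The candidates in the statement are $\n:=(\tfrac r2 p_1^2,\dots,\tfrac r2 p_m^2,p_1,\dots,p_{m-1})$ (note $\ex W=n\n$) and~$\S$ the positive definite solution of $A\S+\S A^T+\s^2=0$, where~$A$ is constructed below; both are independent of~$n$.

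Next I would verify the regression property~\Ref{Exch-props} with $R_1\equiv0$. Conditioning on~$\zeta$, which determines~$W$, a direct count using $r$-regularity gives $\ex\{\xi_{M_i}\giv\zeta\}=n^{-1}(rp_iN_i-2M_i)$ and $\ex\{\xi_{N_i}\giv\zeta\}=n^{-1}(np_i-N_i)$; after substituting $N_m=n-\sum_{j<m}N_j$ and recentring at $\m:=\ex W$, the right-hand side equals $n^{-1}A(W-\m)$ for an explicit block-triangular matrix~$A$ whose diagonal blocks are $-2I_m$ and $-I_{m-1}$, so that~$A$ is Hurwitz with eigenvalues $-2$ and $-1$. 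Taking a further conditional expectation given~$W$ preserves the linearity, so $R_1(W)=0$ and the conditions~\Ref{Exch-R-bnds-11}--\Ref{Exch-R-bnds-12} hold automatically; moreover, by the exchangeable identity~\Ref{Exch-A-symmetry}, $R_1\equiv0$ forces $n\S=\var W$. It then remains to check that~$\s^2$ is positive definite and that the lattice hypothesis of Lemma~\ref{Exch-dtv-lemma} holds, i.e.\ that every coordinate vector of~$\re^{2m-1}$ is a finite sum of elements of~$\JJ$; both follow once one shows that~$\JJ$ generates~$\Z^{2m-1}$. For $m\ge3$ this can be done by exhibiting a few explicit local re-colourings: recolouring, to colour~$i$, a vertex of colour~$m$ all of whose neighbours avoid the colours~$m$ and~$i$ alters only~$N_i$ (up by one) and~$N_m$ (down by one), which realises the $N_i$-coordinate vector, and a similar two-step combination (first creating one monochrome edge together with a colour change, then cancelling the colour change) realises each $M$-coordinate vector; all these configurations have positive probability precisely because $m\ge3$. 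By contrast, when $m=2$ the identity $rN_1-M_1+M_2=nr/2$ confines~$W$ (and hence~$\xi$) to a proper sublattice of~$\Z^3$, which is exactly why $m\ge3$ is assumed.

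With these structural facts in hand, the terms in the bound of Theorem~\ref{AX-exch-thm} are estimated as follows. The ratio $L=(\|A\|/n)^{1/2}\chi\{\tr(\s^2)\}^{-3/2}$ is $O(n^{-1/2})$, since $\|A\|$, $\chi=\ex|\xi|^3$ and $\tr(\s^2)$ are fixed positive constants; $\ex|R_1(W)|=0$; and, because $|\xi|$ is uniformly bounded, both $\ex\{|\xi|^3\e_1(\xi)\}$ and $\ex\|R_2(W)\|_1$ are controlled, via Lemma~\ref{Exch-dtv-lemma} and Corollary~\ref{Dec-useful-bounds}, by $\vva^*:=\max_{J\in\JJ}\vva^J$ (and~$\vvb^*$, itself dominated by a multiple of~$\vva^*$). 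The heart of the matter is therefore the estimate $\vva^*=O(n^{-1/2})$. Here I would use Remark~\ref{Exch-conditioning} to replace $Q^J(W)$ by $\pr[\xi=J\giv\zeta]$, which equals $n^{-1}\sum_v g_J(\text{restriction of }\zeta\text{ to the closed neighbourhood of }v)$ for a $[0,1]$-valued function~$g_J$ depending only on $r,m,p_1,\dots,p_m$. Since the patterns of~$\zeta$ around two vertices at graph distance more than~$2$ are independent, this is an average of~$n$ bounded summands whose dependency graph has maximum degree at most~$r^2$; hence its variance is $O(n^{-1})$, and Cauchy--Schwarz gives $\ex|\pr[\xi=J\giv\zeta]-q^J|=O(n^{-1/2})$, and, since~$q^J$ is a fixed positive constant, $\vva^*=O(n^{-1/2})$. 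Substituting $L$, $\e_1\le\vvb^*$, $\ex|R_1(W)|$, $\ex\|R_2(W)\|_1$ and $\ex\{|\xi|^3\e_1(\xi)\}$---all $O(n^{-1/2})$---together with the fixed dimension $d=2m-1$ into Theorem~\ref{AX-exch-thm} then yields $\dtv(\law(W),\DN_{2m-1}(n\n,n\S))=O(n^{-1/2}\log n)$ for all sufficiently large~$n$. The main obstacle in executing this plan is the variance bound for $\pr[\xi=J\giv\zeta]$---a routine but slightly fiddly ``sum of weakly dependent indicators'' computation---together with the combinatorial verification, valid exactly for $m\ge3$, that~$\JJ$ generates the full lattice~$\Z^{2m-1}$.
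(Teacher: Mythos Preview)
Your proposal is correct and follows essentially the same route as the paper: the same single-vertex-resampling exchangeable pair, the same exact linear regression with the block-triangular Hurwitz matrix~$A$, the same variance argument (via conditioning on the full colouring and a bounded-degree dependency graph) to get $\vva^*=O(n^{-1/2})$, and the same combinatorial verification that $\JJ$ generates~$\Z^{2m-1}$ precisely when $m\ge3$. Your observation that positive definiteness of~$\s^2$ follows from the lattice-generation property is a tidy addition that the paper leaves implicit.
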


\begin{proof}
We use the notation of Theorem~\ref{AX-exch-thm} throughout.
We begin by observing that 
\[
     \ex M_i \Eq nrp_i^2/2;\quad \ex N_i \Eq np_i,
\]
determining $\n := n^{-1}\ex W$. After rather more calculation, the covariances
are given, for $1\le i\neq l\le m$, by
\eq\label{mono-cov}
  \begin{array}{ll}
   \var(M_i) \Eq \half nr p_i^2(1-p_i)\{1 + (2r-1)p_i\}; &\cov(N_i,N_l) \Eq -np_ip_l;\\
   \cov(M_i,M_l) \Eq -\half nr(2r-1)p_i^2p_l^2;  &\cov(M_i,N_l) \Eq -nrp_i^2p_l;\\
    \cov(M_i,N_i) \Eq nrp_i^2(1-p_i); &\var(N_i) \Eq np_i(1-p_i),
  \end{array}
\en
in turn determining~$\S$.

 We now construct an exchangeable pair $(W,W')$ by
first realizing a colouring $(C(j),1\le j\le n)$, and using it to define
\eq\label{Sept-MN-defs}
   M_i \Def \sum_{\{j,j'\} \in G}I[C(j) = C(j')=i]\quad\mbox{and}\quad 
      N_i \Def \sjn I[C(j) = i],
\en
for each $1\le i\le m$, thus defining~$W$.  We then choose a vertex~$K$ uniformly
at random, independently of $(C(j),1\le j\le m)$, and then replace $C(K)$ by~$C'$,
where~$C'$ is independently sampled from $1,2,\ldots,m$ with $\pr[C'=i] = p_i$,
$1\le i\le m$. If this new colouring is denoted by $(C'(j),1\le j\le m)$, then
we define $M_i'$ and~$N_i'$ as in~\Ref{Sept-MN-defs}, but with the~$C'(j)$ in place of~$C(j)$,
and hence deduce~$W'$.  
Of course, $\law(W,W') = \law(W',W)$, and~$W'$ differs
from~$W$ only through the (possibly) new colour at the vertex~$K$, and through
its impact in changing which edges incident to~$K$ are monochrome:
\eqs
    M_i' - M_i &=&   \sum_{j\colon \{j,K\} \in G}(I[C(j) = C'(K) = i] - I[C(j) = C(K) = i]) \\   
     N_i' - N_i &=& \{I[C'(K) = i] - I[C(K)=i]\}.
\ens
Hence, for $1\le l\le m$, we have
\eqs
    \leqn{\ex\{\xi_l \giv C(1),\ldots,C(n)\}}\\
        &=& n^{-1}\skn \sum_{j\colon \{j,k\} \in G}\{p_l I[C(k) = l] - I[C(j)=C(k)=l]) \\
        &=& n^{-1}\{p_l r N_l - 2M_l\} \Eq \ex\{\xi_l \giv W\},
\ens
and, for $m+1 \le l \le 2m-1$,
\[
    \ex\{\xi_l \giv C(1),\ldots,C(n)\} \Eq n^{-1}\{n p_{l-m} - N_{l-m}\} \Eq \ex\{\xi_l \giv W\}.
\]
This gives an exact linear regression as in~\Ref{Exch-props}, with~$R_1(w) = 0$ for all~$w$, and with~$A$ 
having non-zero elements given by
\eqs
   A_{ll} &:=& -2,\quad A_{l,l+m} \Def rp_l,\qquad 1\le l\le m-1; \\
   A_{mm} &:=& -2,\qquad A_{m,m+t} \Def -rp_m, \quad 1 \le t \le m-1;\\
   A_{ll} &:=& -1,\qquad m+1 \le l \le 2m-1.
\ens
Since~$A$ is upper triangular, its eigenvalues are $-2$, with multiplicity~$m$, and~$-1$, with multiplicity~$m-1$,
so that it is indeed spectrally negative.

The set~$\JJ$, consisting of the possible values that can be taken by~$\xi$, is finite, and
does not depend on~$n$. If $C(K) = i \neq l = C'(K)$, then
the $m+i$ and~$m+l$ components of~$\xi$ each have modulus one (though, if $i$ or~$l$ are equal to~$m$, 
one of these components is not present in~$W$), and the $i$ and~$l$ components are in modulus at most~$r$;
all other components of~$\xi$ are zero.  Hence $|\xi|^2 \le 2(r^2+1)$ a.s., and $\ex|\xi|^3$ remains
bounded as~$n$ increases; $L$ is thus of strict order~$n^{-1/2}$.  The components of~$\s^2:= \ex\{\xi\xi^T\}$ can be 
explicitly calculated: for $1 \le l \neq l' \le m$, they are given by
\eqs
  \ex \xi_l^2 &=& 2p_l^2(1-p_l)\{r(r-1)p_l + r\};\quad \ex\{\xi_l\xi_{l'}\} \Eq -2r(r-1)p_l^2p_{l'}^2;\\
  \ex\{\xi_l\xi_{m+l}\} &=& 2rp_l^2(1-p_l);\quad \ex\{\xi_l\xi_{m+l'}\} \Eq -2rp_l^2p_{l'};\\
  \ex\{\xi_{m+l}^2\} &=&  2p_l;\quad \ex\{\xi_{m+l}\xi_{m+l'}\} \Eq -2p_l p_{l'},
\ens
where terms with subscript~$2m$ are to be ignored.
% The matrix~$\S$ can then in principle be deduced by solving the equation $A\S + \S A^T + \s^2 = 0$.

In order to apply Theorem~\ref{AX-exch-thm}, we now just need to find bounds for $\e_1$, $\ex\{|\xi|^3\e_1(\xi)\}$
and $\ex\|R_2(W)\|_1$. From Lemma~\ref{Exch-dtv-lemma} and Corollary~\ref{Dec-useful-bounds}, 
these are all
bounded by fixed multiples of $\vva^*$ and~$\vvb^*$.  For each~$J$ in the fixed finite set~$\JJ$,
the probability~$q^J$ in the denominator of~$\vva^J$ is fixed and positive, and hence 
bounded away from zero. To bound the numerator, we condition on a larger $\s$-field $\FF$,
with respect to which~$W$ is measurable, as in Remark~\ref{Exch-conditioning}.  Let~$T_{m,r}$ denote the
set of all $m$-tuples of nonnegative
integers $t_1,\ldots,t_m$ such that $\siim t_i = r$, and, for $\bt := (t_1,\ldots,t_m) \in T_{m,r}$, 
let $E_j(i_0;\bt)$ denote the
event that $C(j) = i_0$, and that~$t_i$ of the~$r$ neighbours of~$j$ have colour~$i$, $1\le i\le m$.
For each fixed~$j$, these are disjoint events whose union over~$1\le i_0 \le m$ and $\bt\in T_{m,r}$ 
is the sure event.
We let~$\FF$ be the $\s$-field generated by the events 
\[
   \{E_j(i_0;\bt);\ 1\le j\le n, 1\le i_0\le m, \bt\in T_{m,r} \}.
\]
Then, if $K = j$, the value~$J \in \JJ$ taken by~$\xi$ is determined by which of the events
$(E_j(i_0;\bt);\ 1\le i_0\le m, \bt\in T_{m,r})$ occurs. For each~$J$, there is a collection~$S(J)$ 
of possible choices, consisting of just one possible~$i_0 = i_0(J)$, the index for which~$J_{m+i_0} = -1$
(if there is none, then $i_0 = m$), but of all~$\bt$ that satisfy $t_{i_0} = -J_{i_0}$
and $t_{i_1} = J_{i_1}$, where~$i_1$ is the index for which $J_{m+i_1} = 1$ (or~$m$, if there is
none such). Thus
\[
    \pr[\xi = J \giv \FF] \Eq n^{-1}\sjn\, \sum_{\bt \in T_{m,r}\colon\! (i_0(J),\bt) \in S(J)} I[E_j(i_0(J);\bt)].
\]
Now, if $j'\neq j$ is such that the set of neighbours~$\NN(j)$ (including~$j$) in~$G$ is disjoint from 
the set~$\NN(j')$, the events $I[E_j(i_0(J);\bt)]$ and~$I[E_{j'}(i_0(J);\bt')]$ are independent.  Since, for
each~$j$, there are no more than $r + r^2$ choices of~$j' \neq j$ for which this is not the case, it follows
that
\[
     \var\{\pr[\xi = J \giv \FF]\} \Eq O(n^{-1}).
\]
Hence $\var\{Q^J(W)\} = O(n^{-1})$ also, and so $\ex|Q^J(W) - q^J| = O(n^{-1/2})$ for all~$J \in \JJ$,
implying that $\vva^* =  O(n^{-1/2})$.

The argument for~$\vvb^*$ is not yet finished, since, for each $1\le l\le 2m-1$, it is necessary to find a 
chain $J\ui,J\ut,\ldots,J^{(R)}$ such that each $J\uii \in \JJ$ and $\sum_{i=1}^R J\uii = e^{(l)}$.
For $m+1 \le l \le 2m-1$, this is easy: $\xi = e^{(l)}$ if, when~$W$ is constructed, a vertex has colour~$m$ 
and no neighbours of colours $m$ or~$l$, and its colour is replaced by~$l$ when resampling to obtain~$W'$.
Note that, to do this, we need at least three colours: $m \ge 3$. To get $e^{(l)}$ for $1 \le l\le m-1$,
a chain of length~$2$ is needed: a vertex of colour~$m$ with no neighbours of colour~$m$ and with exactly one of
colour~$l$ is recoloured with colour~$l$, giving $J = e^{(l)} + e^{(l+m)}$.  Then $J = -e^{(l+m)}$ can be
attained by reversing the order of the choices in the example for $m+1 \le l \le 2m-1$.  To get $e^{(m)}$,
a vertex of colour~$l \neq m$ with no neighbours of colour~$l$ and exactly one of colour~$m$ is recoloured~$m$,
yielding $e^{(m)} - e^{(m+l)}$, and then adding $e^{(m+l)}$ as before completes the chain.  Thus, 
for $m \ge 3$, we have $\vvb^* = O(n^{-1/2})$ also, and applying Theorem~\ref{AX-exch-thm}, the result follows.
\end{proof}
  
There remains the case of $m=2$.  Here, discrete normal approximation in total variation
is not good, since it can be seen that $M_1 - M_2 = r(N_1 - n/2)$, so that~$W$ is degenerate;
what is more, reducing to $(W_1,W_2)$ gives an integer vector living on a proper sub-lattice of~$\Z^2$.
However, the pair $(M_1,N_1)$ can be approximated using the method above, and the remaining
components of $M$ and~$N$ follow from $N_2 = n - N_1$ and $M_2 = M_1 - r(N_1-n/2)$.

\section{Technicalities}\label{appendix}
\subsection{Proof of Lemma~\ref{AX-first-lemma}}\label{L5.1}
Let~$\f_n$ denote the density of the multivariate normal distribution
$\NN_d(nc,n\S)$, and, for $X \in \Z^d$, let $[X]$ denote the box
\[
   [X] \Def \bigl\{x \in \re^d\colon X_i - \half < x_i \le X_i + \half,\,1\le i\le d \bigr\}.
\]
\adbr{Let $N_d$, $d\ge1$, denote a standard $d$-dimensional normal random vector.}
For~(a), the bound on $\ex \nS{W-nc}^l$ is obtained by first writing 
$$
   \nS{X-nc}^{l} \Le (\nS{X-t} + \nS{t-nc})^{l} \Le 2^{l-1}(\nS{X-t}^{l} + \nS{t-nc}^{l}).
$$
Taking this inside the integral, we have
\eqs
  \ex \nS{W-nc}^l &=& \sum_{X \in \Z^d} \nS{X-nc}^{l} \int_{[X]} \f_n(t) \,dt  \\
   &\le&  \sum_{X \in \Z^d} \int_{[X]} \f_n(t) 
            2^{l-1}((\half\sqrt{d/\lmin(\S)})^{l} + \nS{t-nc}^{l}) \,dt \\
   &\le&  \ex\{2^{l-1}((\half\sqrt{d/\lmin(\S)})^{l} + n^{l/2}|N_d|^{l}) \} \\[1ex]
   &\le&  2^{l} \ex|N_d|^{l} n^{l/2},
\ens
for 
$$
  n\ \ge\ \frac{d}{4(\ex|N_d|)^2\lmin(\S)} \Eq \frac d{8\lmin(\S)}\{\G(d/2)/\G((d+1)/2)\}^2.
$$ 
Part~(a) follows, taking \adbr{$C(l) := 2^{l}\sqrt{k(l)}$, where
$$
     k(l) \Def \ex N_1^{2l} \Eq  \frac{(2l)!}{2^l l!},
$$
since $2^{l} \ex|N_d|^{l} \le 2^l\sqrt{\ex N_d^{2l}} \Le 2^l d^{l/2} \sqrt{\ex N_1^{2l}}$,}
and by noting that, in $d\ge1$,
$\tfrac d8\{\G(d/2)/\G((d+1)/2)\}^2 \le 1$.

For~(c), the bound on $\ex\{[\S^{-1}(W-nc)]_j^{2l}\}$, we first note \adbr{that
\[
    \ex\{(a^T N_d)^{2l}\} \Eq (a^Ta)^l \ex\{N_1^{2l}\} 
                          \Eq  k(l)(a^Ta)^l , 
\]
for any $a \in \re^d$.}  So, since
\[
    [\S^{-1}(X-nc)]_j^{2l} \Le 2^{2l-1}\Blb  \Bl\frac{ d}{4\{\lmin(\S)\}^2}\Br^l + [\S^{-1}(t-nc)]_j^{2l} \Brb
\]
for $t \in [X]$, it follows that
\eqs
    \leqn{\ex\{[\S^{-1}(W-nc)]_j^{2l}\} \Eq \sum_{X \in \Z^d} [\S^{-1}(X-nc)]_j^{2l} \int_{[X]} \f_n(t)\,dt }\\
     &&\Le 2^{2l-1}\Blb  \Bl\frac{ d}{4\{\lmin(\S)\}^2}\Br^l 
                     + n^l\ex\bigl\{\{(\ej)^T\S^{-1/2}N_d\}^{2l}\bigr\} \Brb \\
     &&\Le 2^{2l-1}\Blb  \Bl\frac{ d}{4\{\lmin(\S)\}^2}\Br^l + n^l k(l) (\S^{-1})_{jj}^l \}, \Brb
\ens  
and the stated bound follows, \adbr{with $C'(l) = 2^{2l-1}k(l)$}.  Part~(b) is similar, but simpler. \hfill$\qed$

\subsection{Proof of Lemma~\ref{AX-main-lemma}}\label{L5.2}
We note first that, from Lemma~\ref{AX-first-lemma}(a),
\eq\label{AX-DN-moments}
   \ex\nS{W-nc}^i \Le C(i) (nd)^{i/2},
\en
if $n \ge 1/\lmin(\S)$.
For~(a), bounding the difference between $\ex\{\D f(W)^T b\, I\nud(W)\}$
and $n^{-1}\ex\{(f(W)\,(W-nc)^T\S^{-1}b\, I\nud(W)\}$, we begin by observing that
\eqa
   \lefteqn{\ex\{\D_j f(W) I\nud(W)\} \Eq \sum_{X\in\Z^d} f(X)\{\pr[W = X-\ej] - \pr[W=X]\} I\nud(X)} \non\\
     &&\mbox{} + \sum_{X\in\Z^d} f(X) \pr[W = X-\ej]\{I\nud(X-\ej) - I\nud(X)\}.\phantom{XXXXXX}
     \label{AX-part-one-1}
\ena
Because, from the definition of~$I\nuh(X)$, $|I\nud(X-\ej) - I\nud(X)| = 1$ requires $\nS{X-\ej-nc} > n\d/3$
and $\nS{X-nc} \le n\d/3$, or vice versa, the last term in~\Ref{AX-part-one-1} is in modulus at most 
$$
   \pr[\nS{W -nc} > n\d/3-1/\sqrt{\lmin(\S)}] \max_{\nS{X-nc} \le n\d/3+1/\sqrt{\lmin(\S)}}|f(X)|.
$$
Thus it follows from~\Ref{AX-DN-moments} and a fourth moment Markov inequality that, 
if $n \ge  \max\{1/\lmin(\S),6/(\d\sqrt{\lmin(\S)})\}  = \max\{1/\lmin(\S),\smh_\S(\d)\}$ , then
\eqa
  \lefteqn{\Bigl| \sum_{X\in\Z^d} f(X) \pr[W = X-\ej]\{I\nud(X-\ej) - I\nud(X)\}\Bigr|}\non\\
     &&\Le \|f\|^\S_{n\d/\adbg{2},\infty}\, \pr[\nS{W-nc} > n\d/6] \non\\
     &&\Le (6/\d)^4 d^2 C(4)n^{-2} \|f\|^\S_{n\d/\adbg{2},\infty} 
         \Le d^2C_1(\d) n^{-2} \|f\|^\S_{n\d/\adbg{2},\infty}, \label{AX-err-bnd-1}
\ena
where $C_1(\d) = (6/\d)^4 C(4) \in \KKS(\d)$.  
 
For the remainder of~\Ref{AX-part-one-1}, we write
\eqs
  \pr[W = X-\ej] - \pr[W=X] &=& \int_{[X]} \f_n(t) D_j(t)\,dt,
\ens
where
\[
    D_j(t) \Def \exp\Blb -\frac1{2n}\{-2[\S^{-1}(t-nc)]_j + (\S^{-1})_{jj}\}\Brb - 1.
\]
Since $|e^x - 1 - x| \le \half x^2 e^{|x|}$, it follows that, for $\nS{X-nc} \le n\d/3$,
\eqs
  \leqn{\Bigl| D_j(t) - \frac1n [\S^{-1}(t-nc)]_j \Bigr|} \\
    &&\le \frac1{2n}|(\S^{-1})_{jj}| + \frac1{n^2}\{([\S^{-1}(t-nc)]_j)^2 + \quarter (\S^{-1})_{jj}^2\}e^{\ccc_j(\d)},
    \phantom{XX}
\ens
where
\eqa
  \ccc_j(\d) &:=& \frac1n\max_{\nS{X-nc} \le n\d/3}\Bigl\{
         |[\S^{-1}(X-nc)]_j| +  |(\S^{-1})_{jj}| + \half d^{1/2} \|\S^{-1}\| \Bigr\} \non\\
   &\le& \frac13 \|\S^{-1/2}\| \d + \frac3{2\lmin(\S)} \ =:\ \ccc^*(\d), \label{Aug-cj-def}
\ena 
if $n \ge d^{1/2}/\lmin(\S)$, true in turn if 
$n \ge n_1 := (\lmin(\S))^{-8/7}$, because $n \ge d^4$. Note also that $n_1 \ge 1/\lmin(\S)$.
Hence, fixing~$\d$, for such~$X$ and for $t \in [X]$,
\eqa
  \Blm D_j(t) - \frac1n [\S^{-1}(X-nc)]_j \Brm
    &\le& C_2(\d) n^{-1}(d^{1/2} + n^{-1}[\S^{-1}(X-nc)]_j^2),\phantom{XX} \label{AX-Dj-approx}
\ena
for~$C_2(\d) := 2e^{\ccc^*(\d)}/\lmin(\S) \in \KKS(\d)$, again if $n \ge n_1$. 
This in turn implies that, for $\nS{X-nc} \le n\d/3$,
\eqa
  \leqn{\bigl| \{\pr[W = X-\ej] - \pr[W=X]\}  - n^{-1}\pr[W=X][\S^{-1}(X-nc)]_j \bigr|} \non\\
 &&\Le    C_2(\d) n^{-1}(d^{1/2} + n^{-1}[\S^{-1}(X-nc)]_j^2)\pr[W=X], \label{AX-prob-diff}\phantom{XXXXX}
\ena
and hence that
% and, using~\Ref{AX-f-norm-bnd}, this gives
\eqa
   \lefteqn{\Bigl| \sum_{X\in\Z^d} f(X)\{\pr[W = X-\ej] - \pr[W=X]\} I\nud(X)}\non\\
    &&\qquad\qquad\qquad\qquad\mbox{} -
      n^{-1}\ex\{f(W)[\S^{-1}(W-nc)]_jI\nud(W)\}\Bigr| \phantom{XXX}\non\\
          &&\Le  C_2(\d)n^{-1}\ex\{d^{1/2} + n^{-1}[\S^{-1}(W-nc)]_j^2\}\|f\|^\S_{n\d/2,\infty}.\label{AX-part-one-2}
\ena
% for $C_2' \in \KKS$.
Now, writing $b = \sum_{j=1}^d b_j\ej$ and using linearity and Lemma~\ref{AX-first-lemma}(c), 
requiring $n \ge d/\{4(\lmin(\S))^2\}$, the inequality~(a) follows, if 
$n \ge \max\{n_{\ref{AX-main-lemma}},\smh_\S(\d)\}$, where
\eq\label{Dec-n5.2-def}
     n_{\ref{AX-main-lemma}} \Def \max\Blb d^4, n_1,\{4(\lmin(\S))^2\}^{-4/3}\Brb,
\en
with 
\eq\label{Dec-C5.2(1)-def}
      C\ui_{\ref{AX-main-lemma}}(\d) \Def C_1(\d) + C_2(\d)\{1 + C'(1)(1+1/\lmin(\S))\}.
\en

For~(b), bounding the difference between $\ex\{\D f(W)^T B(W-nc)\, I\nud(W)\}$
    and $\ex\{f(W)\,[n^{-1}(W-nc)^T \S^{-1}B(W-nc) - \tr B]\, I\nud(W)\}$,  we argue in similar style.  
For $i \neq j$, writing $E^{(ji)} := \ej (\eii)^T$, we have
\eqa
  \leqn{ \ex\{\D f(W)^T E^{(ji)}(W-nc) I\nud(W)\} \Eq \ex\{\D_j f(W)(W_i-nc_i) I\nud(W)\} }\non\\
         &=& \sum_{X\in\Z^d} f(X)(X_i-nc_i)\{\pr[W = X-\ej] - \pr[W=X]\} I\nud(X) \label{AX-part-two-1}\\
     &&\mbox{}\quad + \sum_{X\in\Z^d} f(X)(X_i-nc_i) \pr[W = X-\ej]\{I\nud(X-\ej) - I\nud(X)\}.
     \non
\ena
% Using~\Ref{AX-f-norm-bnd}, 
For $n \ge \max\{n_{\ref{AX-main-lemma}},\smh_\S(\d)\}$,
we bound the second element in~\Ref{AX-part-two-1} much as for~\Ref{AX-err-bnd-1},
using a Markov inequality, Cauchy--Schwarz and Lemma~\ref{AX-first-lemma}(a,b), giving
\eqa
  \lefteqn{\Bigl| \sum_{X\in\Z^d} f(X)(X_i-nc_i) \pr[W = X-\ej]\{I\nud(X-\ej) - I\nud(X)\}\Bigr|}
                 \non\\
         &&\Le \ex\{|W_i - nc_i| I[\nS{W-nc} > n\d/6]\}  \|f\|^\S_{n\d/2,\infty}\non\\
         &&\Le (6/n\d)^3 \ex\{|W_i - nc_i| \nS{W-nc}^3\}  \|f\|^\S_{n\d/2,\infty}\non\\
         &&\Le (6/n\d)^3 \sqrt{\ex|W_i -nc_i|^2}\,\sqrt{\ex\nS{W-nc}^6}\, \|f\|^\S_{n\d/2,\infty}\non\\
% (6/\d')^4 C_d(4,0)n^{-1} (\d'\sqrt{\lmax(\S)}/3)\sqrt d\{1 + \sqrt{\r(\S)} + \third n\d'\sqrt{\lmax(\S)}\}
%                  \|\D f\|^\S_{n\d'/2,\infty} \non\\
     &&\Le (6/\d)^3 n^{-1}d^{3/2}\sqrt{2(1 + \S_{ii})C(6)}\, \|f\|^\S_{n\d/2,\infty} \non\\
      &&\Le d^{3/2}C_3(\d) n^{-1}\|f\|^\S_{n\d/2,\infty}, 
                \label{AX-err-bnd-2}
\ena
where~$C_3(\d) =  (6/\d)^3 \sqrt{2(1 + \lmax(\S))C(6)} \in\KKS(\d)$. 
 The first element in~\Ref{AX-part-two-1} is treated using~\Ref{AX-prob-diff}, Cauchy--Schwarz
and Lemma~\ref{AX-first-lemma}(b,c), giving
\eqa
   \lefteqn{\Bigl| \sum_{X\in\Z^d} f(X)(X_i-nc_i)\{\pr[W = X-\ej] - \pr[W=X]\} I\nud(X)}
           \non\\
    &&\qquad\qquad\qquad\mbox{} -
      n^{-1}\ex\{f(W)(W_i-nc_i)[\S^{-1}(W-nc)]_jI\nud(W)\}\Bigr| \phantom{XXX}\non\\
    &&\Le  C_2(\d) n^{-1}\ex\{|W_i-nc_i|(d^{1/2} + n^{-1}[\S^{-1}(W-nc)]_j^2)\}\|f\|^\S_{n\d/2,\infty} \non\\
    &&\Le C_2(\d) n^{-1/2} \sqrt{2(1 + \S_{ii})}\Bigl(d^{1/2}  + \sqrt{C'(2)(1+(\S^{-1})_{ii}^2)}\Bigr)
                    \|f\|^\S_{n\d/2,\infty} \non\\
    &&\Le  d^{1/2}C_4(\d) n^{-1/2}  \|f\|^\S_{n\d/2,\infty}, \label{AX-part-two-2}
\ena
with 
$$
    C_4(\d) \Def C_2(\d) \sqrt{2(1 + \lmax(\S)}\bigl(1  + \sqrt{C'(2)(1+\lmin(\S)^{-2})}\bigr)\ \in\ \KKS(\d).
$$ 
Note that 
$$
    (W_i-nc_i)[\S^{-1}(W-nc)]_j = (W-nc)^T\S^{-1}E^{(ji)}(W-nc).
$$

For $i=j$, there is an extra term:
\eqa
   \leqn{\ex\{\D f(W)^T E^{(ii)}(W-nc) I\nud(W)\}}  \non\\
         &&\Eq \sum_{X\in\Z^d} f(X)(X_i-nc_i)\{\pr[W = X-\eii] - \pr[W=X]\} I\nud(X) \non\\
     &&\mbox{}\quad + \sum_{X\in\Z^d} f(X)(X_i-nc_i) \pr[W = X-\eii]\{I\nud(X-\eii) - I\nud(X)\}.
                   \non\\
     &&\mbox{}\qquad - \sum_{X\in\Z^d} f(X) \pr[W = X - \eii] I\nud(X-\eii) . \label{AX-part-two-3}
\ena
Now
\eqs
    \leqn{ \sum_{X\in\Z^d} f(X) \pr[W = X - \eii] I\nud(X-\eii) }\\
         &&\qquad\Eq \ex\{\D_i f(W) I\nud(W)\} + \ex\{f(W) I\nud(W)\},
\ens
and $|\ex\{\D_i f(W) I\nud(W)\}| \le \|\D f\|^\S_{n\d/2,\infty}$, giving
\eqa
   \lefteqn{\Bigl| \ex\{\D f(W)^T E^{(ii)}(W-nc) I\nud(W)\} }\label{AX-part-two-4}\\
    &&\mbox{} -
      n^{-1}\ex\{f(W)(W-nc)^T\S^{-1}E^{(ii)}(W-nc) I\nud(W)\} - \ex\{f(W) I\nud(W)\}\Bigr| \non\\
          &&\Le  d^{3/2}C_3(\d) n^{-1}\|f\|^\S_{n\d/2,\infty} +  d^{1/2}C_4(\d) n^{-1/2}  \|f\|^\S_{n\d/2,\infty} +
            \|\D f\|^\S_{n\d/2,\infty}.\non
\ena
The second estimate now follows
for general~$B = \sid\sjd B_{ij}E^{(ij)}$, by linearity, with 
\eq\label{Dec-C5.2(2)-def}
     C\ut_{\ref{AX-main-lemma}}(\d) \Def C_4(\d)  + C_3(\d),
\en
provided that $n \ge d^2$.

The proof of the final part of Lemma~\ref{AX-main-lemma}, 
bounding the difference between $\ex\{\D f(W)^T B(W-nc)\, I\nud(W)\}$
and $\ex\{f(W)\,[n^{-1}(W-nc)^T \S^{-1}B(W-nc) - \tr B]\, I\nud(W)\}$,
proceeds in very much the same way, but starting with $\ej b^T$ in place of $E^{(ji)}$
in \Ref{AX-part-two-1} and~\Ref{AX-part-two-3}, for any $b \in \re^d$,
\adbr{and then writing $B = \sjd \ej b(j)^T$ with $b(j) := B^T \ej$.}  The quantities
$(X_i-nc_i)$ and $(W_i - nc_i)$ are replaced in the computations by $b^T(X-nc)$ and $b^T(W-nc)
= b^T \S^{1/2}\,\S^{-1/2}(W-nc)$ respectively.
The error terms corresponding
to \Ref{AX-err-bnd-2} and~\Ref{AX-part-two-2} then yield the bounds $d^2C_3'(\d)|b| n^{-1}\|f\|^\S_{n\d/2,\infty}$
and $dC_4'(\d)|b| n^{-1/2}  \|f\|^\S_{n\d/2,\infty}$, 
with 
\eqa
    C_3'(\d) &:=& (6/\d)^3 C(4) \sqrt{\lmax(\S)};  \label{ADB-new-C-dash-defs}\\
    C_4'(\d) &:=& C_2(\d)\sqrt{C(2)\lmax(\S)}\bigl\{1 + \sqrt{C'(2)(1 + \lmin(\S)^{-2})} \bigr\},
\ena
\adbb{giving $C_{\ref{AX-main-lemma}}\uh(d) = C_4'(\d)  + C_3'(\d)$.}
\adbr{
The analogue of~\Ref{AX-part-two-3} yields an error bounded by $|b_j|\|\D f\|^\S_{n\d/2,\infty}$, and
Part~(c) now follows.}
% by writing $B = \sjd \ej (\ej)^T B$, and applying the above bounds
% with~$b = (\ej)^T B$ for each $1\le j\le d$.  
\hfill$\qed$

\ignore{
\subsection{Theorem~\ref{ADB-DN-approx-thm} is implied by Theorem~\ref{AX-discrete-normal}}\label{5.5-1.1}
For Theorem~\ref{ADB-DN-approx-thm}, we assume that
\eq\label{AX-Eucl-conditions-12}
    \ex|W-nc|^2 \Le d\adbr{v}n;\quad \dtv(\law(W),\law(W+\ej)) \Le \e_1 \ \mbox{for each}\ 1\le j\le d,
\en
and that, for some $\h \le \h_0$ and for all $h\colon \Z^d\to\re$,
\eqa
   \leqn{ |\ex\{\AA_n h(W)\}I[|W-nc| \le n\h/6]| }\non\\
            && \Le \Lbar(\e_{20}\|h\|_{n\h_0/4,\infty} + \e_{21}n^{1/2}\|\D h\|_{n\h_0/4,\infty} 
             + \e_{22}n\|\D^2 h\|_{n\h_0/4,\infty}).\phantom{XX}
   \label{AX-Eucl-condition-3}
\ena
Conditions (a) and~(b) of Theorem~\ref{ADB-normal-approx-thm} are thus clearly satisfied, with
$v/\lmin(\S)$ for~$v$.
For Condition~(c), for any $\h > 0$, let $\h^- := \h /\sqrt{\lmax(\S)}$, $\h^+ := \h/ \sqrt{\lmin(\S)}$.
Then %, for all~$k>0$,
\eqa
   \{X\in\Z^d\colon \nS{X-nc} \le n\h^-\} &\subset& \{X\in\Z^d\colon |X-nc| \le n\h\}\phantom{XXXXX}\non\\
    &\subset& \{X\in\Z^d\colon \nS{X-nc} \le n\h^+\}. \label{AX-norm-subsets}
\ena
Thus, for any~$f\colon \Z^d\to\re$ and any $\h > 0$, we have
\[
   \|f\|^\S_{n\h^-/4,\infty} \Le \|f\|_{n\h/4,\infty} \Le \|f\|^\S_{n\h^+/4,\infty}, 
\]
where $\|f\|_{n\h,\infty} := \max_{|X-nc| \le n\h}|f(X)|$ and
$\|f\|^\S_{n\h,\infty}$ is as in~\Ref{ADB-norm-notation}.
Noting that, for~$\h_0$ defined in Theorem~\ref{ADB-DN-approx-thm}, we have $\h_0^+ = \td_0$
for~$\td_0$ as defined in Theorem~\ref{ADB-normal-approx-thm},
the right hand side of~\Ref{AX-Eucl-condition-3} is bounded above by 
\eq\label{AX-first-bit}
   \Lbar\Bigl(\e_{20}\|h\|^\S_{n\td_0/4,\infty} + \e_{21}n^{1/2}\|\D h\|^\S_{n\td_0/4,\infty} 
                   + \e_{22}n\|\D^2 h\|^\S_{n\td_0/4,\infty}\Bigr).
\en
On the other hand, 
\eqs
   \leqn{\bigl||\ex\{\AA_n h(W)\}I[|W-nc| \le n\h/6]| - |\ex\{\AA_n h(W)\}I[\nS{W-nc} \le n\h^-/6]|\bigr|}\\
   &&\Le |\ex\{\AA_n h(W)(I[|W-nc| \le n\h/6] - I[\nS{W-nc} \le n\h^-/6])\}| \phantom{XXx}\\
   &&\adb{\Le \half n \ex\bigl\{|\tr(\s^2\D^2h(W))| I[n\h^-/6 < \nS{W-nc} \le n\h^+/6]\bigr\}} \\
   &&\qquad\qquad\mbox{} \adb{  + \ex\bigl\{|\D h(W)^T A(W-nc)| I[n\h^-/6 < \nS{W-nc} \le n\h^+/6]\bigr\} } \\
   &&\Le \half n\|\D^2 h\|^\S_{n\h^+/6,\infty} \|\s^2\|_1 \pr[\nS{W-nc} > n\h^-/6]\\
   &&\qquad\mbox{}     +   \|\D h\|^\S_{n\h^+/6,\infty}\|A\| \ex\{|W-nc|I[\nS{W-nc} > n\h^-/6]\}.
\ens
Then, if $\ex|W-nc|^2 \le dvn$, we have
$$
    \pr[\nS{W-nc} > n\h^-/6] \Le \frac{36\ex \nS{W-nc}^2}{(n\h^-)^2} \Le \adbb{\frac{36 \Rh(\S) dv}{n\h^2}},
$$
and
\eqs
   \leqn{ \ex\{|W-nc|I[\nS{W-nc} > n\h^-/6]\}} \\
    &&\Le \sqrt{\lmax(\S)}\ex\{\nS{W-nc}I[\nS{W-nc} > n\h^-/6]\}
            \Le \adbb{6\Rh(\S)dv/\h},
\ens
from~the definition of~$\h^-$. Hence it follows that
\eqs
    \lefteqn{|\ex\{\AA_n h(W)\}I[\nS{W-nc} \le n\h^-/6]| }\\
   &\le& \frac{18 \Rh(\S) \adbb{dv}}{n\h^2}\|\s^2\|_1\, n \|\D^2 h\|^\S_{n\h^+/6,\infty}
      + \frac{6\Rh(\S)\adbb{dv}}{n^{1/2}\h} \|A\|\, n^{1/2} \|\D h\|^\S_{n\h^+/6,\infty}\\ 
   &&\quad\mbox{} + \Lbar\bigl(\e_{20}\|h\|^\S_{n\td_0/4,\infty} + \e_{21}n^{1/2}\|\D h\|^\S_{n\td_0/4,\infty} 
                   + \e_{22}n\|\D^2 h\|^\S_{n\td_0/4,\infty}\bigr)\\
   &\le& \Lbar\bigl(\e_{20}\|h\|^\S_{n\td_0/4,\infty} + \e_{21}'(\h,n) n^{1/2}\|\D h\|^\S_{n\td_0/4,\infty} 
              + \e_{22}'(\h,n) n\|\D^2 h\|^\S_{n\td_0/4,\infty}\bigr),  \label{AX-last-bit}
\ens
with 
\eqs
    \e_{21}'(\h,n) \Eq \e_{21} + \adbb{6n^{-1/2}d(\Rh(\S)v/\h)\|A\|/\Lbar};\\
    \e_{22}'(\h,n) \Eq \e_{22} + \adbb{18n^{-1}d^{5/2}(v/\h^2)\|\s^2\|/\Lbar},
\ens
using $\|\s^2\|_1 \le d^{3/2}\|\s^2\|$.  With the choice $\d' = \half\h^- \le \half\h_0^+ = \half\td_0$, 
it follows that  
Condition~(c) of Theorem~\ref{ADB-normal-approx-thm} is satisfied.
% with the corresponding values of $\d$, $\d'$ and~$\e_1'$.  
Thus, from Theorem~\ref{ADB-normal-approx-thm}, the conditions of Theorem~\ref{ADB-DN-approx-thm} imply the 
conclusion
\eqs
    \leqn{ \dtv(\law(W),\DN_{d}(nc,n\S)) }\\
   &&\Le \adbg{C_{\ref{ADB-normal-approx-thm}}}(\adbb{v/\lmin(\S)},\h)\log n\\
   &&\qquad\bigl(d^{3}n^{-1/2}+d^{7/2}(\gbars/\Lbar)\e_1+
           \e_{20} + d^{1/4}\e_{21}'(\h,n) + d^{1/2}\e_{22}'(\h,n)\bigr) \\
   &&\Le C'\bigl(d^{3}n^{-1/2}+d^{7/2}(\gbars/\Lbar)\e_1+
    \e_{20} + d^{1/4}\e_{21} + d^{1/2}\e_{22}\bigr)\log n,
\ens
with
\[
  \phantom{XX}  C'  \Def \adbb{\adbg{C_{\ref{ADB-normal-approx-thm}}}(v/\lmin(\S),\h) 
               + (6v/\Lbar)\max\{\Rh(\S)\h^{-1}\|A\|, 3\h^{-2}\|\s^2\|\}}. \phantom{X}\qed
\]
}

\subsection{Proof of Lemma~\ref{Exch-moments}}\label{Exch-moments-appx}
To bound the moments of $Z := (nd\vnew)^{-1/2}\S^{-1/2}(W-\m)$, 
we use the equation $\ex h(W') - \ex h(W) = 0$ for suitably chosen real functions~$h$.
First, we take $h(w) = (w-\m)^T\S^{-1}(w-\m)$, giving
\[
   \ex\{2\xi^T\S^{-1}(W-\m) + \xi^T\S^{-1}\xi \} \Eq 0.
\]
\adbg{Noting that $\xi^T\S^{-1}\xi = \tr(\S^{-1/2}\xi\xi^T\S^{-1/2})$, and using~\Ref{Exch-props},} we have
\eqs
  \leqn{- \ex\{2n^{-1}(W-\m)^T\S^{-1}A(W-\m) + 2n^{-1/2}\adbn{\moda}^{1/2}R_1(W)^T\S^{-1}(W-\m)\} }\\
       &&\Eq \ex \tr(\s^2_\S(W)) \Eq \tr(\s^2_\S) , \phantom{XXXXXXXXXXXXXX}
\ens
where $\s^2_\S(W) := \S^{-1/2}\s^2(W)\S^{-1/2}$.
Writing $s_n^2 := \ex|Z|^2$, it follows from~\Ref{Exch-R-bnds-11} and because
$A\S + \S A^T + \s^2 = 0$ that 
\[
    2\lla_1 d\vnew \,s_n^2 \Le (d\vnew\lla_1)^{1/2} (\tr(\s^2_\S))^{1/2} (1 + s_n^2) + \tr(\s^2_\S) .
%                    \Eq \tr(\s^2)\{1 + 2\ps_1(1 + s_n^2)\}.
\]
From the definition \adbb{of~$\vnew$ in~\Ref{ADB-exch-defs-2}}, it thus follows directly that
% Hence, with $x := \lla_1^{-1}\|A\| \sqrt{\ex\{|R_1(W)|^2\}}$, it follows that
$s_n^2 \le  2$, establishing the first part.

For the third moment, we start with $h(z) =  (1+z^Tz)^{3/2}$.
The function~$h$ has derivatives
\[
    Dh(z) \Eq 3(1+z^Tz)^{1/2}z
\]
and
\[
            D^2h(z) \Eq \frac{3zz^T}{(1+z^Tz)^{1/2}} + 3(1+z^Tz)^{1/2}I.
\]
\adb{Furthermore, 
%its third derivatives are uniformly bounded.  Hence we have
\eqa
    \leqn{  \Bigl| \{h(z+\z) - h(z)\} - 3(1+z^Tz)^{1/2}\z^T z 
%   \\ &&\mbox{}\quad
    - \frac{3(\z^T z)^2}{2(1+z^Tz)^{1/2}} 
           - \frac32(1+z^Tz)^{1/2}|\z|^2 \Bigr|} \non\\[1ex]
     &&\ =:\ d_3(h,z,\z) \Le k_{3,h}|\z|^3,\phantom{XXXXXXXXXXXXXXXXXXXXX}\label{new-three-expn}
\ena
for a constant~$k_{3,h} \le 22$ that does not depend on~$d$. 
This can be seen  by considering separately the cases where $|\z| \ge (|z|\vee 1)$, $|\z| \le |z|$
and $1 \ge |\z| \ge |z|$.
}

\adb{
For $|\z| \ge (|z|\vee 1)$, simply take the terms one by one, 
% and use the mean value theorem on the first of them, 
giving
\[
    d_3(h,z,\z) \Le |\z|^3(\{5^{3/2} + 2^{3/2}\} + 3\cdot2^{1/2} + \frac32 + \frac32\cdot2^{1/2})
                  \Le 22|\z|^3 .
\]
For $1 \ge |\z| \ge |z|$,
use the bounds
\eqs
  |(1+x)^{1/2} - 1| &\le& \half x^{1/2};\quad |(1+x)^{3/2} - 1 - \tfrac32 x| \Le \tfrac38 x^{3/2}
\ens
in $0 \le x \le 1$ to give
\eqs
  |(1+z^Tz)^{1/2} - 1| &\le \half|\z|;\quad |h(z+\z) - h(z) - \tfrac32(2\z^Tz + \z^T\z)| \Le 27|\z|^3/8.
\ens 
Then the first, second and fourth terms in $d_3(h,z,\z)$ together give at most
\[
     |\z|^3(\tfrac{27}8 + \tfrac32 + \tfrac34) \Le \tfrac{45}8 |\z|^3,
\]
and the third adds at most $\frac32|\z|^3$ to this.
For $|\z| \le |z|$, Taylor's expansion gives
\eqs
  \Bigl|(1 + x + y)^{3/2} - (1+x)^{3/2} - \frac32 y(1+x)^{1/2} - \frac{3y^2}{8\sqrt{1+x}} \Bigr|
       &\le& \frac{|y|^3}{16(1+x)^{3/2}}.
\ens
We take $x = z^Tz$ and $y = 2\z^Tz + \z^T\z$, for which $|y| \le 3|\z||z|$.  
The first, second and fourth terms in $d_3(h,z,\z)$ together thus give 
\[
   \frac{3(2\z^Tz + \z^T\z)^2}{8(1+z^Tz)^{1/2}},
\]
up to an error of at most
\[
   \frac{|2\z^Tz + \z^T\z|^3}{16(1+z^Tz)^{3/2}} \Le \frac{27|\z|^3\,|z|^3}{16|z|^3} \Le \frac{27}{16}\,|\z|^3.
\]
Then
\[
   \Bigl| \frac{3(2\z^Tz + \z^T\z)^2}{8(1+z^Tz)^{1/2}} - \frac{3(\z^T z)^2}{2(1+z^Tz)^{1/2}} \Bigr|
      \Le \frac{12|\z|^3\,|z| + 3|\z|^4}{8|z|} \Le \frac{15}8\, |\z|^3,
\]
giving an overall bound of $\frac{57}{16}|\z|^3$.} 

\adbg{
We now substitute $z = Z = z(W)$  and $\z = (nd\vnew)^{-1/2}\S^{-1/2}\xi$  into~\Ref{new-three-expn}, 
and take expectations. Since 
$$
     \ex h(Z(W + \xi))  \Eq \ex h(Z(W)),
$$
this}
\adb{immediately gives 
\eqa
   \leqn{ \ex\{ -3(1+Z^TZ)^{1/2} (nd\vnew)^{-1/2}\xi^T\S^{-1/2}Z \}  }\non\\ 
   &&\Le n^{-1}\ex\Bigl\{ \frac{3Z^T\S^{-1/2}\xi\xi^T\S^{-1/2}Z}{2d\vnew(1+Z^TZ)^{1/2}} 
          + \frac32 (1+Z^TZ)^{1/2} \frac{|\S^{-1/2}\xi|^2}{d\vnew} \Bigr\} + \frac{k_{3,h}\chi_\S}{(nd\vnew)^{3/2}}\non\\
   &&\Le n^{-1}\frac{3}{2d\vnew}\ex\{(2|Z|+1)|\S^{-1/2}\xi|^2\}  + \frac{k_{3,h}\chi_\S}{(nd\vnew)^{3/2}} . \label{Sept-RE1}
\ena
Now  
\eqa
   \leqn{ \ex\{ -3(1+Z^TZ)^{1/2} (nd\vnew)^{-1/2}\xi^T\S^{-1/2}Z \} }\non\\ 
     &=& \ex\{ -3(1+Z^TZ)^{1/2} (nd\vnew)^{-1/2}(n^{-1}(W-\m)^TA^T + n^{-1/2}\adbn{\moda}^{1/2}R_1(W)^T) \S^{-1/2} Z \}\non\\
     &=&   n^{-1}\ex\{ 3(1+Z^TZ)^{1/2} (\half Z^T \s^2_\S Z - (d\vnew)^{-1/2}\adbn{\moda}^{1/2}R_1(W)^T \S^{-1/2} Z)\},
        \label{Sept-RE2}
\ena
and, using \Ref{Exch-R-bnds-12},
\eq\label{Sept-RE3}
   (d\vnew)^{-1/2}\adbn{\moda}^{1/2}\ex\{(1+Z^TZ)^{1/2}|R_1(W)^T \S^{-1/2} Z)|\} 
           \Le \quarter \lla_1 (1 + \ex|Z|^3).
\en
Then, by the \adbr{arithmetic and geometric means} inequality, for any $a > 0$,
\[
     |Z|\,|\S^{-1/2}\xi|^2 \Le \third\{ (a|Z|)^3 + 2(a^{-1/2}|\S^{-1/2}\xi|)^3\},
\]
so that, taking $a = (d\vnew\lla_1)^{1/3}$,
\eq\label{Sept-RE4}
    (d\vnew)^{-1}\ex\{|Z|\,|\S^{-1/2}\xi|^2 \} \Le \frac{\lla_1}3 \{\ex|Z|^3 + 2(n/\moda)^{1/2} L_\S\}.
\en
Combining \Ref{Sept-RE1}--\Ref{Sept-RE4}, recalling that $\tr(\s^2_\S) = d\vnew\lla_1$, and multiplying by~$n$, 
it follows that
\eqs
    3\lla_1 \ex|Z|^3 &\le& \tfrac34 \lla_1(1 + \ex|Z|^3) + \lla_1 \{\ex|Z|^3 + 2(n/\moda)^{1/2}L_\S\} \\
    &&\mbox{}\qquad\qquad\qquad   + \tfrac32 \lla_1  + k_{3,h}\lla_1 L_\S \sqrt{\frac{\lla_1}{\moda}},
\ens
giving $\ex|Z^3| \le 2(1 + 10(n/\moda)^{1/2}L_\S)$ if~$n /\lla_1 \ge 1$, because $k_{3,h} \le 22$.  
The final inequality is then immediate. \hfill$\qed$}

\end{document}